\newtheorem{Theorem}{Theorem}[section]
\newtheorem{Proposition}[Theorem]{Proposition}
\newtheorem{Lemma}[Theorem]{Lemma}
\newtheorem{Corollary}[Theorem]{Corollary}
\theoremstyle{definition}
\newtheorem{Definition}[Theorem]{Definition}
\theoremstyle{remark}
\newtheorem{Remark}[Theorem]{Remark}
\newtheorem{Example}[Theorem]{Example}
\newtheorem{Exercise}[Theorem]{Exercise}
\numberwithin{section}{chapter}
\numberwithin{equation}{section}
\def\Rep{\operatorname{Rep}}
\def\mod#1{#1\!\operatorname{-mod}}
\def\Mod#1{#1\!\operatorname{-Mod}}
\def\id{\operatorname{id}}
\def\C{{\mathbb C}}
\def\Z{{\mathbb Z}}
\def\F{{\mathcal F}}
\def\Par{{\mathcal P}}
\def\spa{\operatorname{span}}
\def\triv{{\bf 1}}
\def\0{{\bar 0}}
\def\1{{\bar 1}}
\def\B{{\mathcal B}}
\def\a{{\mathfrak a}}
\def\Hom{{\operatorname{Hom}}}
\def\End{{\operatorname{End}\,}}
\def\ind{{\operatorname{ind}\,}}
\def\coind{{\operatorname{coind}}} % dont expect we need it
\def\res{{\operatorname{res}\,}}
\def\Res{{\operatorname{cont}\:}}
\def\im{{\operatorname{im}\,}}
\def\sgn{{\operatorname{\bf sgn}}}
\def\diag{{\operatorname{diag}}}
\def\equivalent{{\ \Leftrightarrow\ }}
\def\lan{\langle}
\def\ran{\rangle}
\def\bi{{\underline{i}}}
\def\bj{{\underline{j}}}
\def\bk{{\underline{k}}}
\def\eps{{\varepsilon}}
\def\phi{{\varphi}}
\def\emptyset{{\varnothing}}
\def\ga{{\gamma}}
\def\la{{\lambda}}
\def\om{{\omega}}
\def\La{{\Lambda}}
\def\si{{\sigma}}
\def\de{{\delta}}
\def\De{{\Delta}}
\def\al{{\alpha}}
\def\be{{\beta}}
\def\Om{{\Omega}}
\def\into{{\hookrightarrow}}
\def\to{\rightarrow}
\def\iso{\,\tilde\rightarrow\,}
\def\nd{{\not{\mid}}\,}
\def\MatRep#1{{\tt{MatRep}}(#1)}
\newdimen\hoogte    \hoogte=12pt    
\newdimen\breedte   \breedte=14pt  
\newdimen\dikte     \dikte=0.5pt 
\newenvironment{Young}{\begingroup
       \def\vr{\vrule height0.89\hoogte width\dikte depth 0.2\hoogte}
       \def\fbox##1{\vbox{\offinterlineskip
                    \hrule height\dikte
                    \hbox to \breedte{\vr\hfill##1\hfill\vr}
                    \hrule height\dikte}}
       \vbox\bgroup \offinterlineskip \tabskip=-\dikte \lineskip=-\dikte
            \halign\bgroup &\fbox{##\unskip}\unskip  \crcr }
       {\egroup\egroup\endgroup}
\def\diagram#1{\relax\ifmmode\vcenter{\,\begin{Young}#1\end{Young}\,}\else%
              $\vcenter{\,\begin{Young}#1\end{Young}\,}$\fi}
\begin{document}
\def\xypic{\hbox{\rm\Xy-pic}}
\frontmatter
\title{Ess\'en Lectures: Representation Theory of Symmetric Groups}
\author{\sc Alexander Kleshchev}
%\address
%{Department of Mathematics\\ University of Oregon\\
%Eugene\\ OR~97403, USA}
%\email{klesh@math.uoregon.edu}

\iffalse{
\thanks{
{2000 subject classification: 17B67, 20C08, 20C20, 17B10, 17B37.}\\
{\phantom{sp-} Both authors
partially supported by the NSF (grant nos DMS-9801442 and DMS-9900134).}
}

\begin{abstract}
Spring 2013
\end{abstract}
}\fi

\maketitle

\tableofcontents

\mainmatter

%\part{Representations of quivers}

These are partial lecture notes from the fifteen Ess\'en Lectures for graduate students  at Uppsala University given (in four days!) in June 2013.

\chapter{Day One}

\section{Group representation theory}
We begin with a general review of group representation theory. 

Fix a ground field $F$, which in these lectures will usually be the field of complex numbers $\C$. Fix also for the moment an arbitrary group $G$. 

There are several equivalent ways to think about representations of $G$ over $F$. 
Let $V$ be an $F$-vector space. 
A {\em representation} of $G$ in $V$ is a homomorphism $\rho:G\to GL(V)$. A {\em representation} of $G$ (over $F$) is a representation of $G$ in some $F$-vector space $V$. 

Given a representation $\rho:G\to GL(V)$ we can define an action of $G$ on $V$ via $gv=\rho(g)(v)$. Thus we get a notion of a $G$-module over $F$. 
Extending by linearity we get a structure of an $FG$-module on $V$, where $FG$ is the group algebra. In this way, the notion of an $F$-representation of $G$ is the same as the notion of an $FG$-module, and everything in Exercise~\ref{ExRepAlg} applies to this situation. In particular we can speak of {\em matrix representations} of finite groups which are just group homomorphisms $\rho:G\to GL_n(F)$. 

Comments on why to do representation theory. 

A left $FG$-module $V$ is called {\em simple} or {\em irreducible} if $V\neq 0$ and $V$ has no submodules different from $0$ and $V$. The main problem of representation theory is to classify irreducible modules. 

In these lectures we will be mainly concerned with representations of finite groups over $\C$. From some very general point of view, this is a trivial subject. Indeed, $\C G$ is a finite dimensional algebra over an algebraically closed field. Moreover, by Maschke's Theorem, every $\C G$-module is semisimple, i.e. the algebra $\C G$ is semisimple in the sense of Wedderburn. By the classical Wedderburn-Artin Theorem, we now must have

\begin{equation}\label{wd}
\C G \cong M_{n_1}(\C) \oplus\cdots\oplus M_{n_r}(\C). 
\end{equation}

We know that each matrix algebra $M_{n_i}(\C)$ has a unique irreducible module up to isomorphism, namely $\C^{n_i}$ with the natural action of $M_{n_i}(\C)$ on the column vectors of $\C^{n_i}$. Moreover, the irreducible module $\C^{n_i}$ lifts to an irreducible module over the direct sum $M_{n_1}(\C) \oplus\cdots\oplus M_{n_r}(\C)$ with the ``wrong'' Wedderburn components $M_{n_j}(\C)$ for $j\neq i$ acting trivially and the ``correct'' Wedderburn component $M_{n_i}(\C)$ acting as before. In this way, we get all non-isomorphic irreducible modules over the algebra $M_{n_1}(\C) \oplus\cdots\oplus M_{n_r}(\C)$ up to isomorphism. (Check this!)

Thus, $r$ is the number of isomorphism classes of irreducible $\C G$-modules
and $n_1,\dots,n_r$ are their dimensions. 
Denote the corresponding irreducible $\C G$-modules by
\begin{equation}\label{EIrrG}
L_1,\dots,L_r.
\end{equation}
Going backwards, if we know the irreducible representations $L_1,\dots,L_r$ of $\C G$, then picking linear bases in them, gives a decomposition
$$
\C G\cong M_{\dim L_1}(\C)\oplus\dots\oplus M_{\dim L_r}(\C).
$$

Let $e_i := (0,\cdots,0, I_{n_i}, 0, \cdots, 0)$ be the identity matrix of the $i$th matrix
algebra. Then 
\begin{equation}\label{EIdG}
e_1,\dots,e_r \in \C G
\end{equation}
are mutually orthogonal central idempotents 
summing to the identity. Since the center of a matrix algebra is one dimensional, spanned by the
identity matrix, $\{e_1,\dots,e_r\}$ is a basis of $Z(\C G)$. Moreover, it is clear that $Z(\C G)$ is a commutative semisimple algebra isomorphic to $\C\oplus\dots\oplus \C$ ($r$ copies).

Note that $e_i$ acts on the $j$th irreducible module $L_j$ 
as $\delta_{i,j}$. 
Considering dimension of each side of (\ref{wd}) as a $\C$-vector space we conclude: 
$$
|G| = (n_1)^2 + \cdots + (n_r)^2.
$$

The number $r$ has a convenient group theoretic interpretation:

\begin{Lemma} The number $r$ in (\ref{wd}) is equal to the
number of conjugacy classes in the group $G$.
\end{Lemma}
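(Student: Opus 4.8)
The plan is to compute the dimension of the center $Z(\C G)$ in two different ways and match the answers. From the discussion preceding the lemma, we already know that $\{e_1,\dots,e_r\}$ is a basis of $Z(\C G)$, so that $\dim_\C Z(\C G) = r$. It therefore suffices to show independently that $\dim_\C Z(\C G)$ equals the number of conjugacy classes of $G$; the lemma will follow by comparing the two counts.

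To obtain the second count, I would produce an explicit basis of $Z(\C G)$ indexed by conjugacy classes. For each conjugacy class $C \subseteq G$, set the \emph{class sum} $z_C := \sum_{g \in C} g \in \C G$. First I would check these lie in the center: conjugation by any $h \in G$ permutes the elements of $C$ among themselves, so $h z_C h^{-1} = z_C$, and since the group elements span $\C G$, commuting with every $h$ forces $z_C \in Z(\C G)$. Next, linear independence is immediate because distinct conjugacy classes are disjoint subsets of the basis $G$ of $\C G$, so the $z_C$ involve pairwise disjoint sets of basis vectors.

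The one substantive step is showing the class sums \emph{span} $Z(\C G)$. Given a central element $z = \sum_{g \in G} a_g\, g$, the condition $h z h^{-1} = z$ for all $h \in G$ translates, after reindexing, into $a_{hgh^{-1}} = a_g$ for all $g,h \in G$. Thus the coefficient function $g \mapsto a_g$ is constant on each conjugacy class, which means $z = \sum_C a_C\, z_C$ where $a_C$ is the common coefficient value on $C$. Hence the class sums span the center.

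Combining these observations, the class sums $\{z_C\}$ form a basis of $Z(\C G)$, so $\dim_\C Z(\C G)$ equals the number of conjugacy classes of $G$. Matching this with $\dim_\C Z(\C G) = r$ yields the claim. I do not expect any serious obstacle here; the only point requiring care is the conjugation-invariance argument identifying which $z$ lie in the center, which is exactly the computation reducing centrality to the coefficient function being a class function.
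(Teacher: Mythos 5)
Your proposal is correct and follows essentially the same route as the paper: both compute $\dim Z(\C G)$ once via the idempotent basis $\{e_1,\dots,e_r\}$ and once via the class sums, whose spanning of the center comes from the observation that the coefficients of a central element are constant on conjugacy classes. Your write-up merely spells out the centrality and linear independence of the class sums, which the paper leaves implicit.
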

\begin{proof} Let us compute $\dim Z(\C G)$ in two different ways.
First, we already know that $\dim Z(\C G) = r$. 
On the other hand, if
$
\sum_{g \in G} a_g g \in \C G
$
is central then conjugating by $h \in G$ you see that
$
a_g = a_{hgh^{-1}}
$
for all $h \in G$.
Hence the coefficients $a_g$ are constant on conjugacy classes.
So if $C_1,\dots,C_s$ are the conjugacy classes of $G$, 
the elements $z_i = \sum_{g \in C_i} g$ form a basis 
$\{z_1,\dots,z_s\}$ for $Z(\C G)$. 
Hence $r = \dim Z(\C G) = s$.
\end{proof}

The elements $z_1,\dots,z_r\in Z(\C G)$ introduced in the proof of the lemma are referred to as {\em class sums}. We saw that they form a basis of $Z(\C G)$. 
%There are other important bases of $Z(\C G)$. 
The connection between this basis and the basis $\{e_1,\dots,e_r\}$ can be clarified using character theory.

By the way, from the Wedderburn decomposition it is also to see the so called Schur's lemma: 

\begin{Lemma}\label{LSchur}%{\rm \cite{}}%
{\bf (Schur's Lemma)}
Let $V$ and $W$ be irreducible $\C G$-modules. 
\begin{enumerate}
\item[{\rm (i)}] If $V\not\cong W$, then $\Hom_G(V,W)=0$.\item[{\rm (ii)}] $\End_G(V)=\C\cdot \id_V$. 
\end{enumerate}
\end{Lemma}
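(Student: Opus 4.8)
The plan is to read both statements straight off the Wedderburn decomposition (\ref{wd}) together with the explicit description of the irreducibles in (\ref{EIrrG}). Since $\Hom_G(-,-)$ and $\End_G(-)$ depend only on the isomorphism classes of their arguments, and every irreducible $\C G$-module is isomorphic to one of $L_1,\dots,L_r$, I may assume $V=L_i$ and $W=L_j$, where $L_k\cong\C^{n_k}$ carries the natural action of the $k$th Wedderburn component $M_{n_k}(\C)$ and the zero action of all the others. With this normalization the whole argument reduces to manipulating the central idempotents $e_1,\dots,e_r$ of (\ref{EIdG}).

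For part (i), suppose $i\neq j$ and let $f\in\Hom_G(L_i,L_j)$. The key device is the central idempotent $e_i$, which acts as $\delta_{i,k}$ on $L_k$. For $v\in L_i$ we have $e_i v=v$, so $\C G$-linearity of $f$ gives the chain $f(v)=f(e_i v)=e_i f(v)$; but $f(v)\in L_j$ and $e_i$ acts as $\delta_{i,j}=0$ there, whence $f(v)=0$. Thus $f=0$, proving $\Hom_G(V,W)=0$.

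For part (ii), I would view $\End_G(L_i)$ as the set of $\C$-linear endomorphisms of $\C^{n_i}$ commuting with the image of $\C G$ in $\End_\C(\C^{n_i})$. By the very construction of $L_i$ that image is the full matrix algebra $M_{n_i}(\C)$, so $\End_G(L_i)$ is precisely the centralizer of $M_{n_i}(\C)$ inside $\End_\C(\C^{n_i})=M_{n_i}(\C)$, i.e. the center $Z(M_{n_i}(\C))$. Transporting this back along the isomorphism $L_i\cong V$ then yields $\End_G(V)=\C\cdot\id_V$.

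The one point that genuinely needs an argument — and the step I would flag as the crux — is the purely linear-algebraic fact that this centralizer collapses to the scalars $\C\cdot I$: a matrix commuting with every elementary matrix $E_{kl}$ must be a scalar multiple of the identity. This is the place where all the real content sits; the same content reappears in the classical elementary proof, where instead of invoking Wedderburn one uses that $\C$ is algebraically closed to produce an eigenvalue $\la$ of $f\in\End_G(V)$, observes that $\ker(f-\la\,\id_V)$ is a nonzero submodule of the irreducible $V$, and concludes $f=\la\,\id_V$. Either route makes clear that the essential hypothesis is the algebraic closedness of the ground field.
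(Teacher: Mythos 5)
Your proof is correct and is exactly the argument the paper has in mind: the text introduces the lemma with ``from the Wedderburn decomposition it is also easy to see Schur's lemma'' and supplies no further details, and your use of the central idempotents $e_i$ for part (i) and of the fact that $\C G$ surjects onto the full matrix algebra $M_{n_i}(\C)$ (whose center is $\C\cdot I$, as the paper itself notes) for part (ii) fills in precisely that sketch. No gaps.
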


\begin{Example}
Let $G$ be abelian. Then there are $r = |G|$ conjugacy classes, and
$n_1^2 + \cdots  + n_r^2 = r$ hence each $n_i = 1$. 
So there are $r$ isomorphism classes of
irreducible $\C G$-module, all of which are one-dimensional. 
To construct the irreducible $\C G$-modules explicitly let us switch to the language of matrix representations, so we have to classify the group homomorphisms $\rho:G\to\C^\times$. By the Fundamental Theorem of Abelian Groups we can decompose $G=C_{a_1}\times\dots\times C_{a_m}$ as a product of cyclic groups with generators $x_1,\dots,x_m$, respectively. Let $\eps_i\in\C$ be a primitive $a_i$th root of $1$, and note that $\rho(x_i)=\eps_i^{l_i}$ for some $0\leq l_i<a_i$, $i=1,\dots,m$. Note that the choice of $l_1,\dots,l_m$ determines $\rho$ explicitly, and there are $|G|$ possible choices, so we have obtained all possible homomorphisms. 
\end{Example}

%A representation $\rho: G \rightarrow GL(V)$ is {\em faithful} if $\rho$ is injective.

\begin{Example}
For any group $G$, there is always the {\em trivial $\C G$-module} $\triv_G$ equal to $\C$ as a vector space with every $g \in G$ acting as $1$. This corresponds to the {\em trivial representation}, namely, the homomorphism mapping every $g \in G$ to $1 \in GL_1(\C)$. Let us always choose $L_1$ in (\ref{EIrrG}) to be the trivial module. 
\end{Example}

\begin{Example}\label{ExS_3DimRep}
Let $G = S_3$. There are three conjugacy classes.
Hence $r = 3$, i.e. there are three isomorphism classes of irreducible
$\C S_3$-modules.
Moreover $n_1^2 + n_2^2+n_3^2 = 6$ so the dimensions
of irreducible modules can only be $1,1$ and $2$.
\end{Example}

\begin{Example}
There is a group homomorphism
$\sgn: S_n \rightarrow \{\pm 1\}\subseteq \C^\times$. One can view this as a 
$1$-dimensional representation, the {\em sign representation}.
The corresponding module is not isomorphic to the trivial module (providing $n > 1$). Recall Example~\ref{ExS_3DimRep}. Now we have constructed both of the $1$-dimensional $\C S_3$-modules:
one is trivial, the other is sign. What about the $2$-dimensional irreducible $\C S_3$-module?
\end{Example}

\begin{Example} %\label{}%{\rm \cite{}}%{\bf ()}
{\rm 
Let $X = \{x_1,\dots,x_n\}$ be a finite $G$-set and $\C X$ be the corresponding permutation $\C G$-module. This defines a representation $\rho:G\to GL(\C X)$. 
%Thus, each $g \in G$ is viewed as an element $\rho(g)$ of $GL(\C X)$. 
Note the $ij$-entry of the matrix of $\rho(g)$ in the natural basis of $\C X$ is $1$ if $g x_j = x_i$ and it is zero otherwise. This means that the matrix $\rho(g)$ is a
{\em permutation matrix}: all its entries are zeros and ones, and there is just one non-zero entry in every row and column. So amongst all matrix representations of $G$, the ones coming from permutation representations are in a sense very easy... On the other hand,  $\C X$ %permutation module 
is {\em not}\, irreducible unless $n=1$ (why?).  
}
\end{Example}

If $V$ and $W$ are two finite dimensional
$\C G$-modules then so is $V \oplus W$. Pick bases of $V$ and $W$, to view $V$ as
a matrix representation $\rho: G \rightarrow GL_n(\C)$ and $W$ as a matrix
representation $\sigma: G \rightarrow GL_m(\C)$. With respect to the
basis for $V \oplus W$ obtained by concatenating the two bases, 
the matrix representation $\rho \oplus \sigma: G \rightarrow GL_{m+n}(\C)$
corresponding to the module $V \oplus W$ has all $g$ mapping to block 
diagonal matrices $\diag(\rho(g), \sigma(g))$. This is how one could think of direct sums of $\C G$-modules in terms of matrices.

\begin{Example}\label{ExPermS3}
Let us go back to $S_3$ again. It acts on $X = \{1,2,3\}$ and so has a permutation representation $\C X$. %The corresponding matrix representation $S_3 \rightarrow GL_3(\C)$ is one you know perfectly well. 
For instance, the image of 
the $3$-cycle $(1\, 2\, 3)$ with respect to the standard basis $v_1,v_2,v_3$
of $\C X$
labelled by the elements of the set $X$ 
is the matrix
$$
\left(\begin{array}{lll}
0 & 0 & 1\\
1& 0 & 0\\
0&1&0
\end{array}\right).
$$
Note that the vector $v_1+v_2+v_3$ is fixed by %all of 
$G$, so it spans a 
$1$-dimensional submodule, isomorphic to the trivial module. 
By Maschke's Theorem that had better have a complement. For instance, the set of all vectors 
$a_1v_1+a_2v_2+a_3v_3$ with $a_1+a_2+a_3 = 0$, which is $\spa(v_1-v_2, v_2-v_3)$, is a complement. Let us write down matrices
with respect to the new basis $v_1+v_2+v_3, v_1-v_2, v_2-v_3$ instead: 
\begin{align*}
\rho(1) &= \left(\begin{array}{lll}
1 & 0 & 0\\
0& 1 & 0\\
0&0&1
\end{array}\right),
\quad 
\rho((1\,2)) = \left(\begin{array}{lll}
1 & 0 & 0\\
0& -1 & 1\\
0&0&1
\end{array}\right),\\
\rho((2\,3)) &= \left(\begin{array}{lll}
1 & 0 & 0\\
0& 1 & 0\\
0&1&-1
\end{array}\right),
\quad
\rho((1\,3)) = \left(\begin{array}{lll}
1 & 0 & 0\\
0& 0 & -1\\
0&-1&0
\end{array}\right),\\
\rho((1\,2\,3)) &= \left(\begin{array}{lll}
1 & 0 & 0\\
0& 0 & -1\\
0&1&-1
\end{array}\right),\quad
\rho((1\,3\,2)) = \left(\begin{array}{lll}
1 & 0 & 0\\
0& -1 & 1\\
0&-1&0
\end{array}\right).
\end{align*}
Note all these matrices are block diagonal. The top $1 \times 1$ block is the trivial representation of $G$ on $V_1 = \C (v_1+v_2+v_3)$, 
the bottom $2 \times 2$ block is a $2$-dimensional representation of $G$ on $V_2 = \spa(v_1-v_2,v_2-v_3)$. 
It is easy to check that $V_2$ is irreducible.
The decomposition $V = V_1 \oplus V_2$ of $V$ into irreducibles corresponds in matrix language to choosing a basis so that each $\rho(g)$ is block diagonal, and 
since the blocks are irreducible representations you cannot do any better. Note, by the way, that we have found the ``missing'' irreducible $\C S_3$-module of dimension $2$. 
\end{Example}

%We classify irreducible complex representations of the dihedral group $D_{2n}=\lan a,b\mid a^n=b^2=1,\ bab^{-1}=a^{-1}\ran$. 

Recall from Exercise~\ref{ExHopfAlg} that $\C G$ is a co-commutative Hopf algebra and so there is a natural structure of $\C G$-module on the tensor product of two $\C G$-modules as well as on a dual of a $\C G$-module. To be more precise, let $V$ and $W$ be $\C G$-modules. Then $V\otimes W$ (which means $V\otimes_\C W$) is a $\C G$-module with the action $g(v\otimes w)=gv\otimes gw$ for all $g\in G$ and $v\in V, w\in W$. Also, $V^*$ is a $\C G$-module with the action $gf(v)=f(g^{-1}v)$ for all $g\in G$, $f\in V^*$, and $v\in V$. These operations satisfy all the natural properties discussed in Exercise~\ref{ExHopfAlg}.

The tensor product discussed above should not be confused with the {\em outer tensor product} which arises as follows. Given {\em two} groups $G$ and $H$, a $\C G$-module $V$ and $\C H$-module $W$, their outer tensor product $V\boxtimes W$ is the vector space $V\otimes W$ considered as a $\C[G\times H]$-module via $(g,h)(v\otimes w)=gv\otimes gw$ for all $g\in G, h\in H, v\in V, w\in W$. In view of Exercise~\ref{ExGrAlDirProd}, this is a special case of the outer tensor product construction for  associative algebras studied in Exercises~\ref{ExOuterTensProdAlg}, \ref{LHomOuterTens}, and~\ref{TOuterTensAlg}. 

Powerful tools to build new representations from old ones are provided by restriction and induction. Let $H$ be a subgroup of a finite group $G$. Denote the category of {\em finite dimensional} $\C G$-modules (and usual $\C G$-module homomorphisms) by $\mod{\C G}$. We have the {\em restriction} and {\em induction} functors 
$$
\res^G_H:\mod{\C G}\to\mod{\C H}, \quad \ind^G_H:\mod{\C H}\to\mod{\C G}. 
$$ 
As a special case of a general fact, $\ind^G_H$ is left adjoint to $\res^G_H$.

Let $g_1,\dots,g_m$ be the left coset representatives of $H$ in $G$. Then $\C G$ is a free right $\C H$-module with basis $\{g_1,\dots,g_m\}$. By Exercise~\ref{ExCanTens}, we have a vector space decomposition:
$$
\ind_H^G V = g_1\otimes V\oplus\dots\oplus g_m\otimes V.
$$
So:

\begin{Lemma} \label{LIndProp}%{\rm \cite{}}%{\bf ()}
Let $G$ be a finite group, $H\leq G$, and $V$ be a finite dimensional $\C H$-module. 
\begin{enumerate}
\item[{\rm (i)}] $\dim\ind_H^G V=[G: H]\dim V$. 
\item[{\rm (ii)}] If $\{v_1,\dots,v_n\}$ is a basis of $V$ then $\{g_i\otimes v_j\mid 1\leq i\leq m,1\leq j\leq n\}$ is a basis of $\ind_H^G V$. 
\end{enumerate}
\end{Lemma}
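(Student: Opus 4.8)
The plan is to read both statements off the vector space decomposition
$$
\ind_H^G V = g_1\otimes V\oplus\dots\oplus g_m\otimes V
$$
recorded immediately above, in which $m=[G:H]$. The only substantive point is to identify each summand $g_i\otimes V$ with $V$ as a vector space; everything else is bookkeeping.

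First I would show, for each fixed $i$, that the linear map $V\to g_i\otimes V$, $v\mapsto g_i\otimes v$, is an isomorphism of vector spaces. Surjectivity is immediate, since these elements span $g_i\otimes V$ by definition. For injectivity I would exploit that $\C G$ is free as a right $\C H$-module on $\{g_1,\dots,g_m\}$: left multiplication by the unit $g_i$ is a right $\C H$-module isomorphism $\C H\iso g_i\C H$, and applying $(-)\otimes_{\C H}V$ gives $V\cong\C H\otimes_{\C H}V\iso g_i\C H\otimes_{\C H}V=g_i\otimes V$. In particular $\dim(g_i\otimes V)=\dim V$.

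Part (i) now follows by summing over the $m=[G:H]$ summands: $\dim\ind_H^G V=m\,\dim V=[G:H]\dim V$. For part (ii), the same isomorphism $V\iso g_i\otimes V$ sends the basis $\{v_1,\dots,v_n\}$ to $\{g_i\otimes v_1,\dots,g_i\otimes v_n\}$, so the latter is a basis of $g_i\otimes V$; since the sum $\bigoplus_i g_i\otimes V$ is direct, the union $\{g_i\otimes v_j\mid 1\le i\le m,\ 1\le j\le n\}$ is a basis of $\ind_H^G V$.

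There is no serious obstacle here once the decomposition is available, but the one step that genuinely needs the hypotheses, rather than being true by fiat, is the injectivity of $v\mapsto g_i\otimes v$: a priori $g_i\otimes v$ could vanish for nonzero $v$. This is precisely where the freeness of $\C G$ over $\C H$ (equivalently, invertibility of $g_i$) is indispensable, so I would make a point of invoking it explicitly rather than tacitly treating $g_i\otimes V$ as a copy of $V$.
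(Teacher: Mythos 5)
Your proposal is correct and follows essentially the same route as the paper, which derives the lemma directly from the decomposition $\ind_H^G V = g_1\otimes V\oplus\dots\oplus g_m\otimes V$ supplied by Exercise~\ref{ExCanTens}; that exercise already contains both the directness of the sum and the isomorphism $V\iso g_i\otimes V$ whose injectivity you rightly flag as the one substantive point. Your explicit justification via freeness of $\C G$ as a right $\C H$-module is exactly the content the paper delegates to that exercise, so nothing is missing.
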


A representation $\rho:G\to GL_n(\C)$ associates to every element $g\in G$ an $n\times n$ matrix. The set of matrices $\{\rho(g)\mid g\in G\}$ is a ``lot of data'' to carry. Miraculously, it turns out that a representation is determined uniquely up to isomorphism by its character: %, which is a $\C$-valued function on $G$, constant on conjugacy classes.

\begin{Definition}%\label{}%{\rm \cite{}}%{\bf ()}
{\rm 
Let $V$ be a finite dimensional $\C G$-module and $(V, \rho)$ be the corresponding  representation of $G$. The {\em character of $V$} is the function 
$\chi_V: G \rightarrow \C$ with $\chi_V(g)$ equal to the trace of the endomorphism
$\rho(g)$. 
}
\end{Definition}

%Whenever convenient, we extend a character to a function on $\C G$ by linearity. 
It is clear that if $V\cong W$ then $\chi_V=\chi_W$. The miracle is that the converse is also true! Clearly $\chi_V(1)=\dim V$. It is also easy to check that $\chi_{V\oplus W}=\chi_V+\chi_W$ and $\chi_{V\otimes W}=\chi_V\chi_W$.

A {\em class function} on $G$ is a function $f:G\to \C$ that is {\em constant on conjugacy classes}.
For example, the character $\chi_V$ of any finite dimensional $\C G$-module
is a class function, since 
$$
tr(\rho(hgh^{-1})) = tr(\rho(h) \rho(g) \rho(h)^{-1})
%= tr (\rho(h)^{-1} \rho(h) \rho(g)) 
= tr (\rho(g))\qquad(g,h\in G).
$$
%hence $\chi_V(hgh^{-1}) = \chi_V(g)$.

Let $C(G)$ denote the vector space of all class functions on $G$, 
%There is an obvious basis for $C(G)$: l
and let 
\begin{equation}\label{EConCl}
C_1,\dots,C_r
\end{equation}
be the conjugacy classes of $G$. 
We will always choose $C_1$ to be the trivial conjugacy class: 
$C_1 = \{1\}$.
Let $\delta_i:G \rightarrow \C$ be the function with
$\delta_i(g) = 1$ if $g \in C_i$, $0$ otherwise.
Then $\{\delta_1,\dots,\delta_r\}$ is clearly a basis of $C(G)$.
There is a much less obvious (and often more important!) basis for $C(G)$, coming from irreducible characters. 

%Recall from (\ref{EIrrG}) that we have picked representatives $L_1,L_2,\dots, L_r$ of the isomorphism classes of irreducible $\C G$-modules with $L_1\cong\triv_G$. 

The characters of the irreducible modules $L_1,\dots,L_r$ are called the {\em irreducible characters of $G$}. They will be denoted, respectively, by  
\begin{equation}\label{EIrrChar}
\chi_1,\dots,\chi_r.
\end{equation}

%Recall the central idempotents $e_1,\dots,e_r$ from (\ref{EIdG}). 

\begin{Theorem} 
$\chi_1,\dots,\chi_r$ is a basis for $C(G)$.
\end{Theorem}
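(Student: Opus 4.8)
The plan is to reduce the claim to a dimension count together with linear independence. We have already seen that the group has exactly $r$ conjugacy classes and that the indicator functions $\delta_1,\dots,\delta_r$ form a basis of $C(G)$, so $\dim C(G)=r$; and each $\chi_i$ is a class function. Since there are exactly $r$ irreducible characters, it therefore suffices to prove that $\chi_1,\dots,\chi_r$ are linearly independent in $C(G)$.

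To establish independence I would work with matrix coefficients. Fix matrix realizations $\rho_i\colon G\to GL_{n_i}(\C)$ of the irreducible modules $L_1,\dots,L_r$, and consider the functions $c^{(i)}_{ab}\colon g\mapsto \rho_i(g)_{ab}$ in $\Map(G,\C)$, for $1\le i\le r$ and $1\le a,b\le n_i$. The key assertion is that these $\sum_i n_i^2=|G|$ functions form a basis of $\Map(G,\C)$. This is exactly a dual reformulation of the Wedderburn isomorphism (\ref{wd}): the algebra map $\C G\to M_{n_1}(\C)\oplus\dots\oplus M_{n_r}(\C)$, $g\mapsto(\rho_1(g),\dots,\rho_r(g))$, is an isomorphism, so pulling back the entry functionals $(A_1,\dots,A_r)\mapsto (A_i)_{ab}$ along it produces a basis of the dual space $(\C G)^*$; under the canonical identification $(\C G)^*\cong\Map(G,\C)$ (a functional $\xi$ corresponds to $g\mapsto\xi(g)$) this basis is precisely $\{c^{(i)}_{ab}\}$. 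Since $\dim\Map(G,\C)=|G|=\sum_i n_i^2$, the count matches and we indeed obtain a basis.

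Granting this, the conclusion is immediate: by definition $\chi_i=\sum_{a=1}^{n_i}c^{(i)}_{aa}$ is a sum of distinct basis vectors, and for different $i$ these sums involve disjoint subsets of the basis. Hence a relation $\sum_i \lambda_i\chi_i=0$ expands to a vanishing linear combination of distinct basis elements of $\Map(G,\C)$, forcing every $\lambda_i=0$. Thus $\chi_1,\dots,\chi_r$ are linearly independent, and being $r$ of them inside the $r$-dimensional space $C(G)$, they form a basis.

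The main obstacle is the basis claim of the second paragraph, i.e.\ turning the Wedderburn isomorphism into linear independence of the coordinate functionals on $\C G$ identified with $\Map(G,\C)$; once this translation is set up correctly everything else is formal. A classical alternative avoids matrix coefficients and instead proves the first orthogonality relations $\langle\chi_i,\chi_j\rangle=\delta_{ij}$ for the Hermitian form $\langle f_1,f_2\rangle=\tfrac1{|G|}\sum_{g}f_1(g)\overline{f_2(g)}$, from which orthonormality gives independence directly; there the work concentrates in an averaging argument over $G$ combined with both parts of Schur's Lemma (Lemma~\ref{LSchur}), the delicate point being the evaluation of the scalar produced in the case $i=j$.
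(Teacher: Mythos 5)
Your proof is correct. It rests on the same underlying fact as the paper's — the Wedderburn isomorphism (\ref{wd}) — but executes the independence step differently. The paper extends each $\chi_i$ linearly to $\C G$ and evaluates at the central idempotents $e_j$ of (\ref{EIdG}), obtaining $\chi_i(e_j)=\delta_{i,j}n_j$; the pairing matrix between the $\chi_i$ and the $e_j$ is diagonal and nonsingular, so the characters are independent, and the dimension count is then identical to yours. You instead dualize the entire isomorphism $\C G\iso M_{n_1}(\C)\oplus\dots\oplus M_{n_r}(\C)$ to show that all $|G|$ matrix coefficients $c^{(i)}_{ab}$ form a basis of $\Map(G,\C)$ (the finite-group analogue of Peter--Weyl), and then read off independence of $\chi_i=\sum_a c^{(i)}_{aa}$ as sums over pairwise disjoint subsets of that basis. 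Your route proves a strictly stronger intermediate statement and is the one that generalizes (to compact groups, and to the fact that matrix coefficients span the whole function space); the paper's evaluation at $e_j$ is the minimal extraction of the same information, namely that $\chi_i$ detects only the $i$th Wedderburn block. Both are complete given (\ref{wd}), which the paper establishes beforehand.
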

\begin{proof}
%Consider the Wedderburn-Artin decomposition (\ref{wd}). 
%In the product on the right hand side, let $e_i := (0,\cdots,0, I_{n_i}, 0, \cdots, 0)$ be the identity matrix of the $i$th matrix algebra.
%Recall that $L_i$ is the module of column vectors for $M_{n_i}(\C)$. 
% on which the idempotent $e_i$ acts as $1$ and all other $e_j$ act as zero.
%So $\chi_i(x)$ is just the trace of the $i$th matrix in the Wedderburn decomposition of an element of $x \in \C G$. 
%But 
By definition, $\chi_i(e_j) = \delta_{i,j} n_j$.
This proves that $\chi_1,\dots,\chi_r$ are linearly independent.
Hence they form a basis by dimensions.
\end{proof}

\begin{Corollary} Two finite dimensional $\C G$-modules $V$ and $W$
are isomorphic if and only if $\chi_V = \chi_W$, i.e. they have the same
characters.
\end{Corollary}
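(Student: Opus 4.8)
The plan is to prove the nontrivial implication, that $\chi_V=\chi_W$ forces $V\cong W$; the reverse implication was already observed in the text, since isomorphic modules give conjugate matrix representations and the trace is invariant under conjugation, so $\chi_V$ depends only on the isomorphism class of $V$.

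First I would invoke Maschke's Theorem (equivalently, the Wedderburn decomposition (\ref{wd})) to write any finite dimensional $\C G$-module as a direct sum of the irreducibles $L_1,\dots,L_r$ from (\ref{EIrrG}). Thus there are nonnegative integers $m_i,n_i$ with
$$
V\cong L_1^{\oplus m_1}\oplus\dots\oplus L_r^{\oplus m_r},\qquad W\cong L_1^{\oplus n_1}\oplus\dots\oplus L_r^{\oplus n_r}.
$$
Since the $L_i$ are a complete list of the distinct irreducibles, the multiplicities $m_i$ are canonically attached to $V$, and two modules with the same multiplicities are isomorphic. Using the additivity $\chi_{V\oplus W}=\chi_V+\chi_W$ noted above, these decompositions give
$$
\chi_V=\sum_{i=1}^r m_i\chi_i,\qquad \chi_W=\sum_{i=1}^r n_i\chi_i,
$$
so the hypothesis $\chi_V=\chi_W$ becomes $\sum_{i=1}^r(m_i-n_i)\chi_i=0$.

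The crux is the linear independence of the irreducible characters, which is exactly the content of the Theorem just proved: $\chi_1,\dots,\chi_r$ form a basis of $C(G)$, hence in particular are linearly independent. From this I conclude $m_i-n_i=0$ for every $i$, so the multiplicities coincide and $V\cong W$. I do not expect a genuine obstacle here: the only delicate points are that the decomposition into irreducibles exists and that the multiplicities are well defined and determine the module up to isomorphism. Both are guaranteed by semisimplicity (Maschke) together with Schur's Lemma~\ref{LSchur}; indeed one can make the multiplicity intrinsic by observing $m_i=\dim\Hom_G(L_i,V)$, which removes any ambiguity in the chosen decomposition.
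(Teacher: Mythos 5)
Your proof is correct and follows essentially the same route as the paper: decompose both modules into irreducibles by Maschke, express the characters as nonnegative integer combinations of $\chi_1,\dots,\chi_r$, and conclude from the linear independence of the irreducible characters. The only cosmetic difference is that you justify the well-definedness of multiplicities via $\dim\Hom_G(L_i,V)$ and Schur's Lemma where the paper cites the Jordan--H\"older Theorem.
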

\begin{proof}
By Maschke's Theorem, %$V$ is a direct sum of irreducibles, say 
$V \cong \bigoplus_{i=1}^r L_i^{\oplus a_i}$ and 
%Similarly
$W \cong \bigoplus_{i=1}^r L_i^{\oplus b_i}$.
By the Jordan-H\"older Theorem, $V \cong W$ if and only if $a_i = b_i$ for all $i$. But $\chi_V = \sum_{i=1}^r a_i \chi_i$
and $\chi_W = \sum_{i=1}^r b_i \chi_i$.
So $\chi_V = \chi_W$ if and only if $a_i = b_i$ for all $i$ by the linear independence of irreducible characters. 
\end{proof}

\iffalse{
In the proof of the theorem, we exploited the idempotents
$e_i \in \C G$ coming from the identity matrices in the Wedderburn decomposition. Let us repeat this, because these are important: there are 
mutually orthogonal central idempotents 
\begin{equation}\label{EIdG}
e_1,\dots,e_r \in \C G
\end{equation}
summing to the identity
with the property that $e_i$ acts on the $j$th irreducible module $L_j$ 
as $\delta_{i,j}$. 
}\fi

%Recall the central idempotents (\ref{EIdG}). There are explicit formulas for them in terms of irreducible characters:

\begin{Lemma}\label{ECIForm}
For each $i=1,\dots,r$ we have  $e_i = \sum_{g \in G} \frac{n_i \chi_i(g^{-1})}{|G|} g$.
% then $$a_g^{(i)} = \frac{n_i \chi_i(g^{-1})}{|G|}.$$
\end{Lemma}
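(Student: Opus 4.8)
The plan is to pin down the coefficients of $e_i$, viewed as an element $\sum_{g\in G}c_g\,g$ of the group algebra, by probing it with the (left) regular representation of $\C G$ on itself, whose character is concentrated at the identity. The point is that the regular character ``reads off'' the identity coefficient of any element of $\C G$, and multiplying $e_i$ on the right by $h^{-1}$ moves the coefficient $c_h$ into the identity slot.

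First I would record two facts about the regular representation, whose character I denote $\chi_{\mathrm{reg}}$. Since left multiplication by $g$ permutes the basis $G$ with a fixed point only when $g=1$, the corresponding permutation matrix has trace $|G|$ at $g=1$ and $0$ otherwise, so $\chi_{\mathrm{reg}}(g)=|G|\,\delta_{g,1}$. On the other hand, the Wedderburn decomposition (\ref{wd}) shows that as a left module $\C G\cong\bigoplus_{j=1}^r L_j^{\oplus n_j}$, so that $\chi_{\mathrm{reg}}=\sum_{j=1}^r n_j\chi_j$.

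Next I would extract the coefficients. For any $a=\sum_{g\in G}c_g\,g$ and any $h\in G$, the identity coefficient of $ah^{-1}$ is exactly $c_h$ (the term with $gh^{-1}=1$), so the first fact gives $c_h=\tfrac1{|G|}\chi_{\mathrm{reg}}(ah^{-1})$. Applying this with $a=e_i$ reduces everything to computing $\chi_{\mathrm{reg}}(e_i h^{-1})$. Here I would use the defining property of $e_i$ recorded earlier in the excerpt, namely that $e_i$ acts on $L_j$ as $\delta_{i,j}$: on the summand $L_j$ the operator $e_i h^{-1}$ acts as $\delta_{i,j}\,\rho_j(h^{-1})$, whose trace is $0$ for $j\neq i$ and $\chi_i(h^{-1})$ for $j=i$. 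Summing over the Wedderburn components with their multiplicities $n_j$ then yields $\chi_{\mathrm{reg}}(e_i h^{-1})=n_i\chi_i(h^{-1})$, so $c_h=n_i\chi_i(h^{-1})/|G|$, which is the asserted formula after renaming $h$ to $g$.

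The step I expect to require the most care is the coefficient-extraction bookkeeping: one must keep straight that $\chi_{\mathrm{reg}}$ detects only the identity coefficient and that right multiplication by $h^{-1}$ (rather than $h$) is what brings $c_h$ forward. A useful sanity check is that this is precisely what produces $\chi_i(h^{-1})$ instead of $\chi_i(h)$ in the final answer, matching the statement. Everything else is a routine application of the Wedderburn decomposition and the idempotent property already established above.
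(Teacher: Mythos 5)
Your proposal is correct and is essentially the paper's own argument: both evaluate the regular character on $e_i h^{-1}$ in two ways, once using that the regular character is supported at the identity (to extract the coefficient $c_h$) and once using the decomposition $\chi_{\mathrm{reg}}=\sum_j n_j\chi_j$ together with the fact that $e_i$ acts as $\delta_{i,j}$ on $L_j$. No gaps.
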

\begin{proof}
Let us write $e_i = \sum_{g \in G} a_g^{(i)} g$, let $\psi$ be the character of the regular $\C G$-module and $g \in G$.
We compute $\psi(e_i g^{-1})$ in two different ways.
On the one hand,
$$
e_i g^{-1} = \sum_h a_h^{(i)} hg^{-1}.
$$
So, since $\psi(1) = |G|$ and $\psi$ is zero on all other group elements, we have 
$$
\psi(e_i g^{-1}) = a_g^{(i)} |G|.
$$
On the other hand, $\psi = \sum_{j=1}^r n_j \chi_j$.
So 
$$
\psi(e_i g^{-1}) = \sum_j n_j \chi_j(e_i g^{-1}).
$$
But $e_i$ acts as zero on all $L_j$ for $j \neq i$, and it acts
as $1$ on $L_i$. So we get that
$$
\psi(e_i g^{-1}) = n_i \chi_i(g^{-1}).
$$
Comparing the two formulas proves the lemma.
\end{proof}

Recall that in (\ref{EConCl}) we have denoted the conjugacy classes of $G$ by $C_1,\dots, C_r$. Let $c_i = |C_i|$ be the size of the $i$th conjugacy class, so e.g. $c_1=1$. Also, pick once and for all a representative $g_i$ in each conjugacy class
$C_i$.

\begin{Definition}%\label{}%{\rm \cite{}}%{\bf ()}
{\rm 
The {\em character table} of $G$
is the $r \times r$ matrix with the $(i,j)$-entry equal to
$\chi_i(g_j)$. 
}
\end{Definition}

It is convenient to think of the rows of the character table of $G$ as being labeled by the irreducible characters of $G$ and the columns being labelled by the conjugacy classes of $G$. The character table is independent of the particular representative
$g_j$ of $C_j$ chosen because $\chi_i$ is a class function.

We now introduce a Hermitian form on the complex vector space $C(G)$ by
defining the pairing of two class functions $\chi$ and $\psi$ as follows:
\begin{equation}\label{EHermFormChar}
(\chi, \psi) = \frac{1}{|G|} \sum_{g \in G} \chi(g) \overline{\psi(g)}\qquad(\chi,\psi\in  C(G)),
\end{equation}
where ``$\,\bar{\ }\,$'' is the  complex conjugation.
Note that 
$(\chi,\chi) = \frac{1}{|G|} \sum_{g \in G} |\chi(g)|^2$ which is a positive real number 
if and only if $\chi \neq 0$. 
So we have a positive definite Hermitian form or an {\em inner product}.

\begin{Theorem} \label{TOrtRel}%{\rm \cite{}}%
{\bf (Character Orthogonality Relations)}
\begin{enumerate}
\item[{\rm (i)}] With respect to the inner product just defined, 
$\chi_1,\dots,\chi_r$ are orthonormal. In particular for any character $\chi$, we have 
$$
\chi = \sum_{i=1}^r (\chi, \chi_i) \chi_i.
$$
\item[{\rm (ii)}] {\bf (Row Orthogonality Relations)} With our usual notation for the character table, we have for any $1\leq i,j\leq r$ that
$$
\sum_{k=1}^r c_k \chi_i(g_k) \overline{\chi_j(g_k)}
= \left\{
\begin{array}{ll}
0 & \text{if $i \neq j$},\\
|G|&\text{if $i = j$.}
\end{array}
\right.
$$
\item[{\rm (iii)}] {\bf (Column Orthogonality Relations)} For any $1\leq i,j\leq r$ we have 
$$
\sum_{i=1}^r \chi_i(g_j) \overline{\chi_i(g_k)}
= \left\{
\begin{array}{ll}
0 & \text{if $j \neq k$},\\
|G| / c_j&\text{if $j = k$.}
\end{array}
\right.
$$
\end{enumerate}
\end{Theorem}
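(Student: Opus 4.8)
The plan is to derive all three parts from the explicit idempotent formula of Lemma~\ref{ECIForm} together with the elementary observation that $\overline{\chi_j(g)} = \chi_j(g^{-1})$ for every $g \in G$. This last identity holds because each $g$ has finite order, so $\rho(g)$ is diagonalizable with eigenvalues that are roots of unity; the eigenvalues of $\rho(g^{-1}) = \rho(g)^{-1}$ are their inverses, which for roots of unity coincide with their complex conjugates, and taking traces gives the claim. With this in hand the Hermitian form rewrites as $(\chi_i, \chi_j) = \frac{1}{|G|}\sum_{g\in G}\chi_i(g)\chi_j(g^{-1})$.

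For part (i) I would compute $\chi_i(e_j)$ in two ways. On one hand $e_j$ acts on $L_i$ as $\delta_{ij}\id$, so $\chi_i(e_j) = \delta_{ij} n_i$. On the other hand, substituting $e_j = \sum_{g} \frac{n_j \chi_j(g^{-1})}{|G|} g$ and using linearity of the character gives $\chi_i(e_j) = \frac{n_j}{|G|}\sum_g \chi_j(g^{-1})\chi_i(g) = n_j\,(\chi_i, \chi_j)$. Comparing the two expressions yields $(\chi_i, \chi_j) = \delta_{ij}$, i.e. orthonormality. Since $\chi_1, \dots, \chi_r$ already form a basis of $C(G)$ by the previous theorem, orthonormality immediately gives the expansion $\chi = \sum_i (\chi, \chi_i)\chi_i$ for any class function, in particular for any character.

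Part (ii) is then a bookkeeping restatement of (i): because $\chi_i$ and $\chi_j$ are class functions, I would group the sum $\sum_{g\in G}\chi_i(g)\overline{\chi_j(g)}$ according to the conjugacy classes $C_1, \dots, C_r$, each class $C_k$ contributing $c_k\,\chi_i(g_k)\overline{\chi_j(g_k)}$, so that $\sum_k c_k\,\chi_i(g_k)\overline{\chi_j(g_k)} = |G|\,(\chi_i,\chi_j) = |G|\,\delta_{ij}$.

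Part (iii) I would obtain from (ii) by a purely linear-algebraic manoeuvre. Encode the character table as the matrix $X = (\chi_i(g_k))_{i,k}$ and set $D = \diag(c_1,\dots,c_r)$. The row relations say precisely that $X D \overline{X}^{\,T} = |G|\,I$. Since $X$ is square this exhibits a two-sided inverse, whence also $\overline{X}^{\,T} X = |G|\,D^{-1}$; reading off the $(j,k)$-entry and conjugating (the right-hand side being real) gives $\sum_i \chi_i(g_j)\overline{\chi_i(g_k)} = \delta_{jk}\,|G|/c_j$. The only genuine subtlety — the step I would flag as the main obstacle — is justifying that a one-sided inverse of a square matrix is automatically two-sided; everything else is substitution and reindexing. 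One should also keep the chosen conjugacy class representatives consistent throughout, but that is exactly the convention fixed just before the statement.
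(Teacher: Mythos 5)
Your proposal is correct and follows essentially the same route as the paper: part (i) by evaluating $\chi_i(e_j)$ in two ways using Lemma~\ref{ECIForm}, part (ii) by regrouping over conjugacy classes, and part (iii) by the matrix identity $AB=I\Rightarrow BA=I$ applied to the character table weighted by class sizes. The only differences are cosmetic — you spell out why $\chi_j(g^{-1})=\overline{\chi_j(g)}$ (which the paper leaves as an easy check) and you explicitly flag the one-sided-inverse step, which is standard for square matrices over a field.
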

\begin{proof}
(i) Clearly $\chi_i (e_j) = \delta_{i,j} n_j$. On the other hand, by Lemma~\ref{ECIForm}, we have  $e_j = \frac{1}{|G|} \sum_g n_j \chi_j(g^{-1}) g$.
Hence
$$
\delta_{i,j} n_j = \frac{1}{|G|} \sum_g n_j \chi_j(g^{-1}) \chi_i(g).
$$
It is easy to see that $\chi_j(g^{-1})=\overline{\chi_j(g)}$. 
Hence the right hand side is $n_j (\chi_i, \chi_j)$.

(ii) This is just a restatement of (i) using the fact that characters are class functions. 

(iii)
Let $A$ be the character table, 
%i.e. the matrix with $ij$-entry $\chi_i(g_j)$. Let 
and $B$ be the matrix with $ij$-entry
$c_i \overline{\chi_j(g_i)} / |G|$.
%Let us compute $AB$: its
The $ij$-entry of the matrix $AB$ is
$$
\frac{1}{|G|}
\sum_{k=1}^r 
c_k
\chi_i(g_k)
\overline{\chi_j(g_k)} 
= \delta_{i,j}.
$$
So $AB = I$. Hence $BA = I$. Now, computing the $ij$-entry of $BA$, we get
$$
\frac{1}{|G|}
\sum_{k} c_i \overline{\chi_k(g_i)} \chi_k(g_j)
= \delta_{i,j}.
$$
We are done. 
\end{proof}

Finally, we mention one important corollary of character theory without proof: the dimension of any irreducible $\C G$ module divides the order of the group $G$.

\section{Gelfand-Zetlin subalgebras and Gelfand-Zetlin bases}\label{S2}
We now begin to study representation theory of the symmetric groups $S_n$ in more detail. Our first approach will be the one suggested by Okounkov and Vershik, but it relies on many classical ideas going back at least to Young. We are going to exploit the following three vague general ideas:

\begin{enumerate}
\item[$\bullet$] We want to understand the Wedderburn decomposition more explicitly in terms of the data of the symmetric group. 
\item[$\bullet$] There is more than one symmetric group, in fact we have a nice nested family $S_1\subset S_2\subset S_3\subset\dots$. 
\item[$\bullet$] The symmetric group algebra $\C S_n$ has a well-known large commutative subalgebra, using which we can try to ``play Lie theory" as if this subalgebra was a maximal toral subalgebra. 
\end{enumerate}

We now expand on the last point. 

Define the $k$th {\em Jucys-Murphy element} (JM-element for short) $L_k\in \C S_n$ as follows:
\begin{equation}\label{EJM}
L_k:=\sum_{1\leq m<k}(m,k).
\end{equation}
Note that $L_1=0$ and $L_k$ commutes with $S_{k-1}$. 
As $L_k\in \C S_k$, it follows that the JM-elements commute.
Here and below, if $m<n$, the default embedding of $S_{m}$ into $S_n$ is with respect to the {\em first} $m$ letters. A copy of  $S_{m}$ embedded with respect to the {\em last} $m$ letters is denoted by $S_m'$.

Denote by $Z_n$ the center of the group algebra $\C S_n$. Also let
$$
Z_{n,m}:=(\C S_{n+m})^{S_n}
$$
be the centralizer of $\C S_n$ in $\C S_{n+m}$. 
It is clear that $Z_{n,m}$ has basis consisting of the class sums corresponding to the $S_n$-conjugacy classes in $S_{n+m}$. These conjugacy classes can be thought of as cycle shapes with `fixed positions' for $n+1,n+2,\dots,n+m$---we call them {\em marked cycle shapes}. For example, the symbol
\begin{equation}\label{E270603}
(*,*,*,*,*)(*,*)(*)(*)(12,*,13,14,*)(15)
\end{equation}
corresponds to the $S_{11}$-conjugacy class in $S_{15}$ which consists of all permutations whose cycle presentation is obtained by inserting the numbers $1$ through $11$ instead of asterisks. 
We denote by 
\begin{equation*}\label{E270603*}
[(*,*,*,*,*)(*,*)(*)(*)(12,*,13,14,*)(15)]\in Z_{11,4}
\end{equation*}
the corresponding class sum. 

Let $Z_{n,m}[i]$ denote the span of the class sums which consists of permutations fixing exactly $n+m-i$ elements (equivalently, moving exactly $i$ elements), and 
$$Z_{n,m}^i:=Z_{n,m}[0]+Z_{n,m}[1]+\dots+Z_{n,m}[i].$$
Then we have a vector space decomposition 
$$
Z_{n,m}=\bigoplus_{i\geq 0} Z_{n,m}[i],
$$
and the algebra filtration 
$$
\C\cdot 1=Z_{n,m}^0=Z_{n,m}^1\subseteq Z_{n,m}^2\subseteq\dots.
$$
\iffalse{
Note that for $k>n$ and $r\leq n$, we have that 
$$
[(*,k)]^r=[(\underbrace{*,\dots,*}_{r},k)]+\text{lower terms}. 
$$
Hence for two distinct elements $k_1,k_2>n$, and $r\leq n$, we have
$$
(k_1,k_2)[(*,k_2)]^r=[(k_2,\underbrace{*,\dots,*}_{r},k_1)]+\text{lower terms}. 
$$
Continuing like this, we see that for any distinct elements $k_1,\dots,k_t>n$, and any non-negative $r_1,\dots,r_t$ with $r_1+\dots+r_t\leq n$, we can get a class sum corresponding to any marked cycle modulo lower terms as follows: 
\begin{equation}\label{E250513}
\begin{split}
&[(*,k_t)]^{r_t}(k_t,k_{t-1})\dots [(*,k_2)]^{r_2}(k_2,k_1)[(*,k_1)]^{r_1}
\\
&=[(\underbrace{*,\dots,*}_{r_t},k_t,\underbrace{*,\dots,*}_{r_{t-1}},\dots,k_{2},\underbrace{*,\dots,*}_{r_1},k_1)]+\text{lower terms}. 
\end{split}
\end{equation}
This in turn implies 
}\fi

\begin{Lemma} \label{LNew}%{\rm \cite{}}%{\bf ()}
Let $x$ be a marked cycle shape which corresponds to the $S_n$-conjugacy class in $S_{n+m}$, consisting of permutations moving exactly $i$ elements, i.e. $x\in Z_{n,m}^{i}\setminus Z_{n,m}^{i-1}$. Then, modulo $Z_{n,m}^{i-1}$, the class sum $[x]$ can be written as a product of an element of $Z_n\subseteq  Z_{n,m}$ and elements of the form $[(*,k)]$ and $(k',k'')$ for some $k,k',k''>n$. 
\end{Lemma}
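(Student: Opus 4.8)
The plan is to separate the cycles of the marked shape $x$ according to whether or not they contain a marked point. Group the cycles of $x$ as $x_{\mathrm{free}}\sqcup x_{\mathrm{mark}}$, where $x_{\mathrm{free}}$ consists of the cycles all of whose entries are asterisks and $x_{\mathrm{mark}}$ consists of the cycles containing at least one of the letters $n+1,\dots,n+m$. I will produce the central factor from $x_{\mathrm{free}}$ and the product of generators from $x_{\mathrm{mark}}$, and then recombine them modulo $Z_{n,m}^{i-1}$.

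First I would dispose of the free part. Filling the asterisks of $x_{\mathrm{free}}$ with distinct elements of $\{1,\dots,n\}$ in all admissible ways produces precisely the $S_n$-conjugacy class in $S_{n+m}$ of a permutation fixing each of $n+1,\dots,n+m$; thus the class sum $z:=[x_{\mathrm{free}}]$ is an ordinary $S_n$-class sum, hence central in $\C S_n$, so $z\in Z_n\subseteq Z_{n,m}$.

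Next, for the marked part, I would establish two building-block identities by direct computation and induction. For $k>n$ and $r\le n$, expanding $[(*,k)]^{r}=\bigl(\sum_{1\le j\le n}(j,k)\bigr)^{r}$ and discarding the terms in which an index repeats (these move strictly fewer letters) gives $[(*,k)]^{r}=[(\underbrace{*,\dots,*}_{r},k)]$ modulo $Z_{n,m}^{i-1}$; and for $k',k''>n$, left multiplication by the transposition $(k',k'')$ splices $k'$ onto a run of asterisks ending in $k''$, again modulo $Z_{n,m}^{i-1}$. Iterating these two moves expresses the class sum of a single marked cycle as a product of factors $[(*,k)]$ and $(k',k'')$, modulo $Z_{n,m}^{i-1}$. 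Since distinct cycles of $x_{\mathrm{mark}}$ use distinct marked points, multiplying these expressions over all marked cycles yields a product $P$ of generators of the allowed form with $P\equiv[x_{\mathrm{mark}}]\pmod{Z_{n,m}^{i-1}}$.

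Both the splicing step and the final recombination rest on a single multiplicativity principle, which is where the real bookkeeping lies: for class sums $[y_1],[y_2]$, any pair of fillings whose supports in $\{1,\dots,n\}$ are disjoint contributes exactly one permutation of the superimposed shape $y_1\sqcup y_2$, whereas any pair with overlapping support yields a permutation moving strictly fewer letters --- since $|A\cup B|<|A|+|B|$ whenever $A\cap B\neq\emptyset$ --- and so lands in $Z_{n,m}^{i-1}$. Applying this to $z\cdot P$, the disjoint-support fillings of $x_{\mathrm{free}}$ and $x_{\mathrm{mark}}$ reassemble, each exactly once and with coefficient $1$, into the permutations of shape $x$, while every remaining term moves fewer than $i$ letters. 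Hence $[x]\equiv z\cdot P\pmod{Z_{n,m}^{i-1}}$ with $z\in Z_n$ and $P$ a product of elements $[(*,k)]$ and $(k',k'')$, as claimed. The main obstacle is exactly this last verification: confirming that the leading, $i$-letter-moving part of $z\cdot P$ equals $[x]$ with coefficient precisely $1$, and that every collision between a free filling and a marked filling strictly lowers the number of moved letters.
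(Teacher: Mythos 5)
Your proposal is correct and follows essentially the same route as the paper's (very terse) proof: reduce to the cycles containing a marked point, use that $[(*,k)]^{r}$ equals the class sum of an $(r{+}1)$-cycle through $k$ modulo lower filtration terms, splice cycles together with transpositions $(k',k'')$, and absorb the all-asterisk cycles into a factor from $Z_n$, with the filtration argument ("overlapping supports move strictly fewer letters") playing the role of the paper's appeal to $Z_{n,m}$ being a filtered algebra. You have merely written out explicitly the bookkeeping that the paper compresses into the phrase "the problem reduces to the case where $x$ is just one cycle."
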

\begin{proof}
First of all, note that the problem reduces to the case where $x$ is  just one cycle, involving some $k>n$, i.e. where $x$ is of the form
$$
x=(\underbrace{*,\dots,*}_{r_t},k_t,\underbrace{*,\dots,*}_{r_{t-1}},\dots,k_{2},\underbrace{*,\dots,*}_{r_1},k_1).
$$
Now, observe that 
$$
[x]\equiv 
[(*,k_t)]^{r_t}(k_t,k_{t-1})\dots [(*,k_2)]^{r_2}(k_2,k_1)[(*,k_1)]^{r_1}
\pmod{Z_{n,m}^{i-1}},
$$
using the fact that $Z_{n,m}$ is an algebra. 
\end{proof}

\iffalse{
To be more precise, a {\em marked permutation} of the set $\{n+1,n+2,\dots,n+m\}$ is a permutation of this set written in arrow notation with arrows labelled  with non-negative integers. For example,
$$
(n+1\stackrel{1}{\rightarrow}n+2\stackrel{0}{\rightarrow}n+1)(n+3\stackrel{2}{\rightarrow}n+3)
$$
is a marked permutation of $\{n+1,n+2,n+3\}$. If the sum of all arrow labels in a marked permutation is equal to $n$, we associate to it an $S_n$-conjugacy class in $S_{n+m}$. For example the marked permutation above corresponds to the $S_3$-conjugacy class in $S_{6}$ which consists of all permutations whose cycle shape is
$$
(n+1
$$
}\fi

\begin{Proposition}\label{POl} %{\rm \cite{OlY}}%
{\bf (Olshanskii's Lemma)}
The algebra $Z_{n,m}$ is generated by $S_m'$, $Z_n$, and $L_{n+1},\dots,L_{n+m}$. 
\end{Proposition}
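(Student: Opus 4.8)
The plan is to show that the subalgebra $A$ generated by $S_m'$, $Z_n$, and $L_{n+1},\dots,L_{n+m}$ contains every class sum $[x]\in Z_{n,m}$, since those class sums form a basis of $Z_{n,m}$. I would argue by induction on the filtration degree $i$, i.e. on the minimal $i$ with $[x]\in Z_{n,m}^i$. The base case $Z_{n,m}^0=\C\cdot 1$ is trivial as $1\in A$. For the inductive step, suppose every class sum lying in $Z_{n,m}^{i-1}$ belongs to $A$, and let $[x]$ move exactly $i$ elements.

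By Lemma~\ref{LNew}, modulo $Z_{n,m}^{i-1}$ the class sum $[x]$ equals a product of an element $z\in Z_n$ and factors of the form $[(*,k)]$ and transpositions $(k',k'')$ with $k,k',k''>n$. The transpositions $(k',k'')$ lie in $S_m'$, hence in $A$; and $z\in Z_n\subseteq A$. So the crux is to show that each factor $[(*,k)]=\sum_{1\le m'\le n}(m',k)$ lies in $A$. This is exactly where the JM-elements enter: I claim $[(*,k)]\equiv L_k \pmod{Z_{n,m}^{\,1}}$ in a suitable sense, or more precisely that $[(*,k)]$ can be recovered from $L_k$ together with $S_m'$. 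Indeed, for $k>n$ we have $L_k=\sum_{1\le j<k}(j,k)=\sum_{1\le j\le n}(j,k)+\sum_{n<j<k}(j,k)=[(*,k)]+\sum_{n<j<k}(j,k)$, and the second sum consists of transpositions in $S_m'\subseteq A$. Therefore $[(*,k)]=L_k-\sum_{n<j<k}(j,k)\in A$.

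Having placed all three kinds of factors inside $A$, their product lies in $A$, so $[x]$ differs from an element of $A$ by something in $Z_{n,m}^{i-1}$; by the inductive hypothesis that correction term is also in $A$, whence $[x]\in A$. This completes the induction and shows $Z_{n,m}\subseteq A$. The reverse inclusion $A\subseteq Z_{n,m}$ is immediate: $S_m'$ commutes with $S_n$ (it permutes the last $m$ letters), $Z_n$ is central in $\C S_n$ and thus commutes with $S_n$, and each $L_{n+k}$ for $k\ge 1$ commutes with $S_n$ because $L_{n+k}\in\C S_{n+k}$ and, being a sum of transpositions $(j,n+k)$ that is $S_{n+k-1}$-invariant, commutes with $S_{n+k-1}\supseteq S_n$. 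Hence all generators centralize $\C S_n$, so $A\subseteq Z_{n,m}$.

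I expect the main obstacle to be bookkeeping in the inductive step: verifying that the congruence of Lemma~\ref{LNew} interacts correctly with the filtration, so that replacing each $[(*,k)]$ by $L_k$ minus lower transpositions, and multiplying out, does not introduce terms of degree higher than $i$ that escape the argument. Concretely I would need to check that products of the $A$-generators stay inside the filtration level being analyzed modulo $Z_{n,m}^{i-1}$, so that the correction term really does land in $Z_{n,m}^{i-1}$ where the inductive hypothesis applies. The identity $L_k=[(*,k)]+\sum_{n<j<k}(j,k)$ is clean, so the only genuine care is in tracking the filtration degrees through the product in Lemma~\ref{LNew}.
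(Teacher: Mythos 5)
Your proposal is correct and follows essentially the same route as the paper's proof: the inclusion $A\subseteq Z_{n,m}$ is checked directly, and the reverse inclusion is proved by induction on the filtration degree using Lemma~\ref{LNew} together with the identity $[(*,k)]=L_k-(n+1,k)-\dots-(k-1,k)$, which is exactly the identity the paper uses. The filtration bookkeeping you flag as a potential obstacle is handled in the paper simply by invoking Lemma~\ref{LNew} and the inductive hypothesis, just as you do.
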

\begin{proof}
It is clear that $S_m'$, $Z_n$, and $L_{n+1},\dots,L_{n+m}$  are contained in $Z_{n,m}$, so they generate a subalgebra $A\subseteq Z_{n,m}$. Conversely, we prove by induction on $i=0,1,\dots$ that $Z_{n,m}^i\subseteq A$. For $i=0$ and $1$, we have $Z_{n,m}^i=F\cdot 1\subseteq A$. Note that for any $k>n$, we have
$$
[(*,k)]=L_k-(n+1,k)-\dots-(k-1,k)\in A. 
$$
So it follows from Lemma~\ref{LNew} that, modulo $Z_{n,m}^{i-1}$, we can write any class sum $[x]\in Z_{n,m}^i$ as a product of elements in $A$. But $Z_{n,m}^{i-1}\subseteq A$ by the inductive assumption, and we are done. 
%We explain the inductive step on example. Let $z\in Z_{11,4}^{12}$ be the class sum corresponding to  the marked cycle shape from (\ref{E270603}). Let $c\in Z_{11}$ denote the sum of all elements of $S_{11}$ whose cycle shape is 
%$$
%(*,*,*,*,*)(*,*)(*)(*).
%$$
%Also, let 
%$$
%x=(12,13)L_{12}(13,14)(L_{14}-(12,14)-(13,14))\in A.
%$$
%(Note that $L_{12}$ is the class sum corresponding to $(*,12)$, and $(L_{14}-(12,14)-(13,14))$ is the class sum corresponding to  $(*,14)$). 
%Then $xc$ is equal to $z$ modulo lower layers of our filtration. 
\end{proof}

Let $B$ be a subalgebra of an $F$-algebra $A$ and $C$ be the centralizer of $B$ in $A$. If $V$ is an $A$-module and $W$ is a $B$-module then $\Hom_B(W,\res_B V)$ is naturally a  $C$-module with respect to the action 
$(cf)(w)=cf(w)$ for $w\in W, f\in  \Hom_B(W,\res_B V), c\in C$. 

\begin{Lemma}\label{L270603}%{\rm \cite{}}%{\bf ()}
Let $B\subseteq A$ be semisimple finite dimensional $F$-algebras. If $V$ is irreducible over $A$ and $W$ is irreducible over $B$ then $$\Hom_B(W,\res_B V)$$ is irreducible over $C$. 
\end{Lemma}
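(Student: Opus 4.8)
The plan is to let $C$ act on $H:=\Hom_B(W,\res_B V)$ through its action on $\res_B V$ and to compare this action with that of the full endomorphism algebra $E:=\End_B(\res_B V)$. First I would record that the formula $\Phi(c)\colon v\mapsto cv$ defines an algebra homomorphism $\Phi\colon C\to E$: the map $v\mapsto cv$ is a $B$-endomorphism of $\res_B V$ precisely because $c$ centralizes $B$. Moreover $E$ acts on $H$ by post-composition, and under $\Phi$ this $E$-action restricts to exactly the given $C$-action $(cf)(w)=cf(w)$. Hence it suffices to prove that $H$ is simple as a module over the subalgebra $\Phi(C)\subseteq E$, and the problem splits into understanding $H$ as an $E$-module and understanding how large $\Phi(C)$ is inside $E$.

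The next step treats $H$ as an $E$-module, and it is completely formal. Since $B$ is semisimple, Artin--Wedderburn applied to the $B$-module $\res_B V$ gives $\res_B V\cong\bigoplus_j W_j^{\oplus m_j}$ with the $W_j$ pairwise non-isomorphic simple $B$-modules, whence $E\cong\prod_j M_{m_j}(\End_B W_j)$. If $W\cong W_{j_0}$ occurs, then $H\cong\Hom_B(W_{j_0},W_{j_0}^{\oplus m_{j_0}})$ is the natural (column) module of the single matrix factor $M_{m_{j_0}}(\End_B W_{j_0})$ and is therefore simple over $E$; if $W$ does not occur then $H=0$, so I would assume throughout that $W$ appears in $\res_B V$. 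This uses only semisimplicity of $B$ together with Schur's Lemma.

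The heart of the matter, and the step I expect to be the main obstacle, is to show that the image $\Phi(C)$ already acts irreducibly. The clean route is the double centralizer theorem. Writing $\rho\colon A\to\End_\C V$ for the representation, simplicity of $V$ together with semisimplicity of $A$ forces $\rho(A)=\End_\C V$; this is where $F=\C$ enters, via $\End_A V=\C$ and Jacobson density. Using the central idempotent of $A$ that cuts out the Wedderburn block of $V$ one checks $\rho(C)=Z_{\rho(A)}(\rho(B))$, and since $\rho(A)$ is the whole matrix algebra this equals $Z_{\End_\C V}(\rho(B))=\End_B(\res_B V)=E$. Thus $\Phi$ is surjective, and combined with the previous step this shows that $H$ is simple over $C$.

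Finally I would remark on what changes over a general field, since the statement is phrased for arbitrary semisimple $F$-algebras. The second step goes through verbatim, with $\End_B W_{j_0}$ a division algebra. Over a non-algebraically closed $F$, however, $\rho(A)$ is only a full matrix algebra over the division ring $\End_A V$, so $\Phi$ need not be surjective onto $E$; one must then argue directly that $\rho(C)=Z_{\rho(A)}(\rho(B))$ still acts irreducibly on the simple $E$-module $H$. This is the genuinely delicate point of the general centralizer theorem, but it is invisible in the case $F=\C$ relevant to these lectures.
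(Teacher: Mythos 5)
Your argument is correct and follows essentially the same route as the paper's: reduce via Wedderburn--Artin to the case where $A$ acts on $V$ as the full endomorphism algebra, so that $C$ maps onto $\End_B(\res_B V)$, and then identify $\Hom_B(W,\res_B V)$ with the natural simple module of the matrix block $\End_B(W^{\oplus k})$. The only cosmetic difference is that you establish surjectivity of $C\to\End_B(\res_B V)$ in full, whereas the paper only exhibits the single block $\End_B(W^{\oplus k})$ inside $C$, which already suffices; your closing remark about non-split division algebras over general $F$ is a fair caveat that the paper glosses over.
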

\begin{proof}
By Wedderburn-Artin, we may assume that $A=\End(V)$.  Decompose $\res_B V=W^{\oplus k}\oplus X$, where $W$ is not a composition factor of $X$. Then the algebra $\End_B(W^{\oplus k})$, naturally contained in $C$, acts on the space $\Hom_B(W,\res_B V)$ as the full endomorphism algebra.  
\end{proof}

\begin{Theorem}\label{TBrMF}%{\rm \cite{}}%{\bf ()}
Let $V$ be an irreducible $\mathbb{C} S_n$-module. Then the restriction $\res_{S_{n-1}} V$ is multiplicity-free.
\end{Theorem}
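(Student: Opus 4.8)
The plan is to translate multiplicity-freeness into the commutativity of a single centralizer algebra, which Olshanskii's Lemma makes transparent. Set $A=\C S_n$, $B=\C S_{n-1}$, and let $C$ be the centralizer of $B$ in $A$; in the notation introduced above this is $C=Z_{n-1,1}=(\C S_n)^{S_{n-1}}$. For any irreducible $\C S_{n-1}$-module $W$, the multiplicity of $W$ in $\res_{S_{n-1}}V$ equals $\dim\Hom_{S_{n-1}}(W,\res_{S_{n-1}}V)$: since $\C S_{n-1}$ is semisimple by Maschke, $\res_{S_{n-1}}V$ is a direct sum of irreducibles, and Schur's Lemma (Lemma~\ref{LSchur}) collapses the $\Hom$-space to exactly the multiplicity. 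So it suffices to show that this $\Hom$-space is at most one-dimensional for every $W$.

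The key input is Lemma~\ref{L270603}, applied with $A=\C S_n$, $B=\C S_{n-1}$ (both semisimple), which tells us that $\Hom_{S_{n-1}}(W,\res_{S_{n-1}}V)$ is an \emph{irreducible} module over the centralizer $C$. Hence the multiplicity bound follows the moment we know that every finite-dimensional irreducible $C$-module is one-dimensional. Over $\C$ this holds whenever $C$ is commutative: if $M$ is a finite-dimensional irreducible module over a commutative algebra, then multiplication by each $c\in C$ is a $C$-module endomorphism of $M$, hence a scalar by Schur's Lemma, so every subspace of $M$ is a submodule and $\dim M=1$. Thus the whole theorem reduces to the commutativity of $C=Z_{n-1,1}$.

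To prove that $Z_{n-1,1}$ is commutative I would invoke Olshanskii's Lemma (Proposition~\ref{POl}) with $m=1$: the algebra $Z_{n-1,1}$ is generated by $S_1'$, $Z_{n-1}$, and $L_n$. But $S_1'$ is the symmetric group on the single letter $n$, hence trivial, so $Z_{n-1,1}$ is generated by $Z_{n-1}$ together with the single Jucys-Murphy element $L_n$. Now $Z_{n-1}$ is the center of $\C S_{n-1}$ and is therefore commutative, while $L_n=\sum_{m<n}(m,n)$ commutes with $S_{n-1}$ (as recorded right after~(\ref{EJM})) and hence with all of $Z_{n-1}\subseteq\C S_{n-1}$. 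A set of pairwise commuting generators generates a commutative algebra, so $Z_{n-1,1}$ is commutative, which closes the argument.

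The conceptual weight sits entirely in the two previously proved lemmas, so there is no genuinely hard computation left. The only step that carries any real content is recognizing that $C$ is commutative, and this is precisely what the generation statement of Proposition~\ref{POl} hands us for free; everything else is the routine dictionary between multiplicities and $\Hom$-dimensions and the one-line fact that commutative $\C$-algebras have only one-dimensional irreducibles. I would expect the main pitfall to be purely bookkeeping: making sure the centralizer $C$ is correctly identified as $Z_{n-1,1}$ and that $m=1$ really makes $S_1'$ disappear, since the entire force of the theorem is funneled through that specialization.
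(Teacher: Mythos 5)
Your proposal is correct and follows exactly the paper's intended route: the paper's proof is the one-line "the centralizer of $\C S_{n-1}$ in $\C S_n$ is commutative by Proposition~\ref{POl}, so the theorem comes from Lemma~\ref{L270603}," and you have simply filled in the details it leaves to the reader (the $m=1$ specialization making $S_1'$ trivial, the commutation of $L_n$ with $Z_{n-1}$, and the standard fact that finite-dimensional irreducibles over a commutative $\C$-algebra are one-dimensional). All of those fillings are accurate, so nothing further is needed.
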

\begin{proof}
It follows from Proposition~\ref{POl} that the centralizer of $\mathbb{C}S_{n-1}$ in $\mathbb{C}S_n$ is commutative. So the theorem comes from Lemma~\ref{L270603} (why?). 
\end{proof}

We now define the {\em branching graph} $\mathbb{B}$ whose vertices are isomorphism classes of irreducible $\mathbb{C}S_n$-modules for all $n\geq 0$ (by convention $\mathbb{C}S_0=\mathbb{C}$); we have a directed edge $W\to V$ from (an isoclass of) an irreducible $\mathbb{C}S_n$-module $W$ to (an isoclass of) an irreducible $\mathbb{C}S_{n+1}$-module $V$ if and only if $W$ appears as a composition factor of  $\res_{S_{n}} V$; there are no other edges. 

Our main goal is to find an explicit combinatorial description of the branching graph. This will give us a good understanding of 
%labeling of the 
irreducible $\mathbb{C} S_n$-modules {\em for all $n$}. 
This will also yield the so-called {\em branching rule}, i.e. the rule that describes a restriction of an irreducible complex $S_n$-representation to $S_{n-1}$. To achieve this goal we will actually do more. 

Let $V$ be an irreducible $\mathbb{C}S_n$-module. 
Theorem~\ref{TBrMF} and Exercise~\ref{ExCan} imply that the decomposition 
$$
\res_{S_{n-1}} V=\bigoplus_{W\to V} W
$$
is canonical. Decomposing each $W$ on restriction to $S_{n-2}$, and continuing inductively all the way to $S_0$, we get a canonical decomposition
$$
V=\bigoplus_{T} V_T
$$
into irreducible $\mathbb{C}S_0$-modules, that is $1$-dimensional subspaces $V_T$, where $T$ runs over all paths $W_0\to W_1\to\dots\to W_n=V$ in $\mathbb{B}$. 

%(For technical convenience we have introduced here the symmetric group $S_0$, which, like $S_1$, is the trivial group.) 

Let $T=(W_0\to W_1\to\dots\to W_n)$. Note that for all $0\leq k\leq n$ we have 
\begin{equation}\label{E010703}
\C S_k\cdot V_T=W_k, \quad\text{and} \quad 
W_T=e_{W_0}e_{W_{1}}\dots e_{W_n}V.
%(T=(W_0\to W_1\to\dots\to W_n),\ 0\leq k\leq n).
\end{equation}

Choosing a vector $v_T\in V_T$, %such that $(v_T,v_T)=1$, 
we get a basis $\{v_T\}$ of $V$ called {\em Gelfand-Zetlin basis} (or GZ-basis). 
%If  $(\cdot,\cdot)$ is fixed, 
Vectors of GZ-basis are defined uniquely up to scalars.  
%Otherwise, there is an additional freedom of multiplying all basis vecors by the same scalar. 
Moreover, if $\phi:V\to V'$ is an isomorphism of irreducible modules then $\phi$ moves a GZ-basis of $V$ to a GZ-basis of $V'$. 

Using Exercise~\ref{ExScPr}, pick an $S_n$-invariant inner product $(\cdot,\cdot)$ on $V$. %; it is unique up to a scalar. 
Note, for example using (\ref{E010703}),  that a $GZ$-basis is orthogonal with respect to $(\cdot,\cdot)$. 

%Now decompose the algebra $\mathbb{C} S_n$ according to the Wedderburn- Artin Theorem
%\begin{equation}\label{EWed}
%\mathbb{C} S_n=\bigoplus_V \End_\mathbb{C}(V),
%\end{equation}
%where the sum is over the representatives of the isoclasses of irreducible $\mathbb{C} S_n$-modules. This decomposition is canonical. Let us pick a $GZ$-basis in each $V$. 
Choice of a $GZ$-basis in each irreducible module $V$ yields by Weddreburn-Artin, a decomposition 
\begin{equation}\label{EWedM}
\mathbb{C} S_n=\bigoplus_V M_{\dim V}(\mathbb{C}). 
\end{equation}

Define the {\em GZ-subalgebra} $A_n\subseteq\C S_n$ as the subalgebra which consists of all elements of $\mathbb{C} S_n$ which are diagonal  with respect to a $GZ$-basis in every irreducible $\mathbb{C} S_n$-module. In terms of the decomposition (\ref{EWedM}), $A_n$ consists of all diagonal matrices. In particular,

\begin{Lemma}%\label{}%{\rm \cite{}}%{\bf ()}
$A_n$ is a maximal commutative subalgebra of $\mathbb{C} S_n$. Also, $A_n$ is a semisimple algebra. 
\end{Lemma}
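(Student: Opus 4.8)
The plan is to read both assertions directly off the Wedderburn decomposition (\ref{EWedM}). Under the isomorphism $\C S_n\cong\bigoplus_V M_{\dim V}(\C)$ attached to a choice of GZ-basis in each irreducible $V$, the subalgebra $A_n$ is by definition the set of elements that are diagonal in every block; that is, $A_n$ corresponds to $\bigoplus_V D_{\dim V}$, where $D_d\subseteq M_d(\C)$ denotes the algebra of diagonal $d\times d$ matrices. Everything will then follow from the elementary fact that $D_d$ is a maximal commutative subalgebra of $M_d(\C)$.

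For semisimplicity I would simply observe that $D_d\cong\C^{\,d}$ as an algebra, so that $A_n\cong\bigoplus_V\C^{\,\dim V}\cong\C^{N}$ with $N=\sum_V\dim V$. A finite product of copies of $\C$ is semisimple (it is a product of fields, or, if one prefers, it is commutative with no nonzero nilpotents), which gives the second claim.

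For maximal commutativity the key step is to show that $A_n$ is its own centralizer in $\C S_n$; the stated conclusion then follows formally, since any commutative subalgebra $B$ with $A_n\subseteq B$ consists of elements commuting with all of $A_n$, whence $B\subseteq C_{\C S_n}(A_n)=A_n$ and so $B=A_n$. To compute the centralizer I would work block by block: the centralizer of $\bigoplus_V D_{\dim V}$ in $\bigoplus_V M_{\dim V}(\C)$ is the direct sum of the centralizers $C_{M_{\dim V}(\C)}(D_{\dim V})$, so it suffices to check that $C_{M_d(\C)}(D_d)=D_d$. This is the standard computation: if $X\in M_d(\C)$ commutes with every diagonal matrix unit $E_{ii}$, then $E_{ii}XE_{jj}=E_{ii}E_{jj}X=0$ for $i\neq j$, so every off-diagonal entry of $X$ vanishes and $X\in D_d$.

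There is essentially no genuine obstacle here: the only points requiring care are the bookkeeping that translates the intrinsic definition of $A_n$ (diagonal with respect to a GZ-basis in every irreducible module, with the ``wrong'' Wedderburn components acting as zero) into the blockwise-diagonal description in (\ref{EWedM}), and the observation that the centralizer of a direct sum of block algebras decomposes as the direct sum of the blockwise centralizers. Once that is in place, the maximal commutativity of the diagonal subalgebra $D_d\subseteq M_d(\C)$ does all the work.
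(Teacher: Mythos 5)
Your proposal is correct and follows exactly the route the paper intends: the paper offers no separate proof, simply asserting the lemma as an immediate consequence (``In particular'') of the identification of $A_n$ with the full diagonal subalgebra $\bigoplus_V D_{\dim V}$ inside the Wedderburn decomposition (\ref{EWedM}). Your writeup merely supplies the standard details the paper leaves implicit --- the centralizer computation $C_{M_d(\C)}(D_d)=D_d$ and the observation $D_d\cong\C^{\,d}$ --- and both are accurate.
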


%Note that we have only defined GZ-bases and GZ-subalgebras over $\mathbb{C}$. `Analogous' subalgebra in characteristic $p$ will not be semisimple. 
We now give two more explicit descriptions of the GZ-subalgebra.

\begin{Lemma}%\label{}%{\rm \cite{}}%{\bf ()}
We have
\begin{enumerate} 
\item[{\rm (i)}] $A_n$ is generated by the subalgebras $Z_0,Z_1,\dots,Z_n\subseteq \C S_n$. 
\item[{\rm (ii)}] $A_n$ is generated by the JM-elements $L_1,L_2,\dots,L_n$.
\end{enumerate}
\end{Lemma}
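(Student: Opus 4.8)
The plan is to prove (i) first and then deduce (ii) from it, since both statements assert that a given set of generators produces the algebra $A_n$ of GZ-diagonal elements. Write $B_n$ for the subalgebra generated by $Z_0,\dots,Z_n$ and $\Gamma_n$ for the subalgebra generated by $L_1,\dots,L_n$; the goal is $B_n=A_n=\Gamma_n$. Throughout I would use the description of $A_n$ as the diagonal matrices in the Wedderburn decomposition (\ref{EWedM}), so that $A_n$ has as a basis the rank-one projections $p_T$ onto the GZ-lines $V_T$, as $T$ runs over all paths in $\mathbb B$ ending at any irreducible $V$.

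For (i) I would prove two inclusions. For $B_n\subseteq A_n$: fix $k\le n$ and $z\in Z_k$. By (\ref{E010703}) each GZ-line $V_T$ lies inside the irreducible $\C S_k$-module $W_k=\C S_k\cdot V_T$, and a central element of $\C S_k$ acts on an irreducible $\C S_k$-module by a scalar (Schur's Lemma). Hence $z$ is diagonal in every GZ-basis, so $Z_k\subseteq A_n$ and therefore $B_n\subseteq A_n$. For $A_n\subseteq B_n$: the second identity in (\ref{E010703}) shows that $e_{W_0}e_{W_1}\cdots e_{W_n}$ acts as the projection $p_T$ onto the line $V_T$ (it sends $V$ onto $V_T$ and annihilates every other irreducible $\C S_n$-module, since $e_{W_n}$ already does). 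As this is a product of central idempotents $e_{W_k}\in Z_k\subseteq B_n$, and the $p_T$ span $A_n$, we get $A_n\subseteq B_n$. This proves $A_n=B_n$.

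For (ii) I would first obtain $\Gamma_n\subseteq B_n$. The key observation is that if $t_k:=\sum_{1\le i<j\le k}(i,j)$ denotes the class sum of all transpositions in $S_k$, then $t_k\in Z_k$ and $L_k=t_k-t_{k-1}$; hence each $L_k\in Z_k+Z_{k-1}\subseteq B_n$, so $\Gamma_n\subseteq B_n=A_n$. In particular $\Gamma_n$ is commutative and the JM-elements are GZ-diagonal. It remains to prove the reverse inclusion $B_n\subseteq\Gamma_n$, that is, $Z_k\subseteq\Gamma_n$ for every $k\le n$, which I expect to be the main obstacle.

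I would handle it by induction on $k$, showing $Z_k\subseteq\Gamma_k$. The cases $k=0,1$ are trivial since $Z_0=Z_1=\C\cdot 1$. For the inductive step, apply Olshanskii's Lemma (Proposition~\ref{POl}) with the pair $(k-1,1)$: the centralizer $Z_{k-1,1}=(\C S_k)^{S_{k-1}}$ is generated by $S_1'$, $Z_{k-1}$, and $L_k$. Since $S_1'$ is trivial, $Z_{k-1,1}$ is generated by $Z_{k-1}$ and $L_k$; by the inductive hypothesis $Z_{k-1}\subseteq\Gamma_{k-1}\subseteq\Gamma_k$ and $L_k\in\Gamma_k$, so $Z_{k-1,1}\subseteq\Gamma_k$. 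Finally $Z_k$, being central in $\C S_k$, commutes with $\C S_{k-1}$, whence $Z_k\subseteq (\C S_k)^{S_{k-1}}=Z_{k-1,1}\subseteq\Gamma_k$, completing the induction. Thus $Z_k\subseteq\Gamma_k\subseteq\Gamma_n$ for all $k\le n$, giving $B_n\subseteq\Gamma_n$ and therefore $\Gamma_n=B_n=A_n$. The crux is precisely this last step, recovering the full center $Z_k$ from the JM-elements; Olshanskii's Lemma does the real work by letting us trade the centralizer condition defining $Z_{k-1,1}$ for the concrete generators $Z_{k-1}$ and $L_k$.
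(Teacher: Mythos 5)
Your proof is correct and follows essentially the same route as the paper: the product of central idempotents $e_{W_0}e_{W_1}\cdots e_{W_n}$ realizing the GZ-projections for (i), and for (ii) the identity $L_k=t_k-t_{k-1}$ together with Olshanskii's Lemma applied to $Z_{k-1,1}\supseteq Z_k$. The only cosmetic differences are that the paper closes part (i) by invoking maximal commutativity of $A_n$ rather than checking $Z_k\subseteq A_n$ directly via Schur's Lemma, and runs the induction in (ii) through $A_{n-1}$ rather than through $\Gamma_{k-1}$.
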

\begin{proof}
(i)
Let $e_V\in Z_n$ be the central idempotent of $\mathbb{C} S_n$ which acts as identity on $V$ and as zero on any irreducible $\mathbb{C} S_n$-module $V'\not\cong V$. If $T=W_0\to W_1\to\dots\to W_n=V$ is a path in $\mathbb{B}$ then 
$$
e_{W_0}e_{W_1}\dots e_{W_n}\in Z_0Z_1\dots Z_n
$$ 
acts as the  projection to $V_T$ along $\oplus_{S\neq T}V_S$ and as zero on any irreducible $\mathbb{C} S_n$-module $V'\not\cong V$. So the subalgebra generated by $Z_0,Z_1,\dots,Z_n$ contains $A_n$. As this subalgebra is commutative and $A_n$ is a maximal commutative subalgebra of $\mathbb{C} S_n$, the two must coincide. 

(ii) Note that  $L_k$ is the sum of all transpositions in $S_k$ minus the sum of all transpositions in $S_{k-1}$, that is $L_k$ is a difference of a central element in $S_k$ and a central element in $S_{k-1}$. So by (i), the JM-elements do belong to $A_n$. To prove that they generate $A_n$, proceed by induction on $n$, the inductive base being trivial. By (i), $A_n$ is generated by $A_{n-1}$ and $Z_n$. In view of the inductive assumption, it suffices to prove that $A_{n-1}$ and $L_n$ generate $Z_n$. But this follows from the obvious embedding $Z_n\subseteq Z_{n-1,1}$ and Proposition~\ref{POl}, as $Z_{n-1}\subseteq A_{n-1}$.
\end{proof}

Now, we will try to have the GZ-subalgebra play a role of a Cartan subalgebra in Lie theory. As $A_{n}$ is semisimple we can decompose every irreducible $\mathbb{C} S_n$-module $V$ as a direct sum of simultaneous eigenspaces for the elements $L_1,\dots,L_n$. If $\bi=(i_1,\dots,i_n)\in\mathbb{C}^n$ and $V_\bi$ is the simultaneous eigenspace for the $L_1,\dots,L_n$ corresponding to the eigenvalues $i_1,\dots,i_n$, respectively, then we say that $\bi$ is a {\em weight} of $V$ and $V_\bi$ is the {\em $\bi$-weight space} of $V$. 

By definition, vectors of a $GZ$-basis are weight vectors. Also, since in terms of (\ref{EWedM}), $A_n$  consists of {\em all} diagonal matrices, each weight space is $1$-dimensional. Thus the weight spaces are precisely the spans of the elements of a GZ-basis. It also follows that if $\bi$ is a  weight of an irreducible module $V$, then it is {\em not} a weight of an irreducible module $V'\not\cong V$.  
Thus, via GZ-bases, we get a one-to-one correspondence between all possible weights (for all symmetric groups) and all paths in $\mathbb{B}$. The weight corresponding to a path $T$ will be denoted $\bi^T$ and a path corresponding to a weight $\bi$ will be denoted $T_\bi$. We will also write $v_\bi$ for $v_{T_\bi}$. 

A path $T$ ends at a vertex $V$ if and only if the corresponding weight $\bi^T$ is a weight of $V$. It is clear now that in order to understand $\mathbb{B}$ it suffices to describe the sets
\begin{equation}\label{EWei}
W(n)=\{\bi\in\mathbb{C}^n\mid \bi\ \text{is a weight of a $\mathbb{C} S_n$-module}\}\quad (n\geq 0)
\end{equation}
and the equivalence relation 
\begin{equation}\label{EEqu}
\bi\approx\bj \ \equivalent\  \text{$\bi$, $\bj$ are weights of the same irreducible $\mathbb{C} S_n$-module}
\end{equation}
on $W(n)$. Indeed, note that 
$$
\mathbb{B}=\bigsqcup_{n\geq 0} (W(n)/\approx)
$$
and for equivalence classes $[\bi]\in W(n-1)/\approx$, $[\bj]\in W(n)/\approx$, we have $[\bi]\to[\bj]$ if and only if $\bi=(k_1,\dots,k_{n-1})$ for some $\bk\approx\bj$. 

\begin{Remark}%\label{}%{\rm \cite{}}%{\bf ()}
{\rm 
For those of you who are spoiled by knowing what the final answer should be: yes, the elements of the set $W(n)/\approx$ will be labeled by the partitions $\la$ of $n$, and the elements of the set $W(n)$ will be labeled by the standard $\la$-tableaux for all such $\la$, with two tableaux being equivalent if and only if they have the same shape $\la$. To be more precise, if $T$ is a standard $\la$-tableaux, then the corresponding weight $\bi^T=(i_1,\dots,i_n)$ is obtained as follows: $i_r$ is the content of the box in $\la$ which is occupied by $r$ in the $\la$-tableaux $T$ ($1\leq r\leq n$).
}
\end{Remark}

The following notation will be convenient: if $\bi=(i_1,\dots,i_n)\in W(n)$, we write $V(\bi)$ for an irreducible $\C S_n$-module which has $\bi$ as its weight. The weight $\bi$ determines $V(\bi)$ uniquely up to isomorphism, but
$
V(\bi)\cong V(\bj)\quad\text{if and only if $\bi\approx \bj$}.
$
Now, (\ref{E010703}) can now be restated as follows:
\begin{equation}\label{E010703_2}
\C S_k\cdot v_\bi=V(i_1,\dots,i_k) \qquad(0\leq k\leq n).
\end{equation}

\chapter{Day Two}

\section{Description of weights}

We have basic transpositions  
$$s_k:=(k,k+1)\in S_n, \qquad(1\leq k<n).
$$ 
Note  important relations
\begin{equation}\label{E280603}
s_kL_k=L_{k+1}s_k-1,\qquad s_kL_m=L_ms_k\quad(m\neq k,k+1).
\end{equation}
The second relation immediately implies

\begin{Lemma}%\label{}%{\rm \cite{}}%{\bf ()}
Let $\bi=(i_1,\dots,i_n)\in W(n)$, and $1\leq k<n$. Then $s_kv_\bi$ is a linear combination of vectors $v_\bj$ such that $j_m=i_m$ for $m\neq k,k+1$. 
\end{Lemma}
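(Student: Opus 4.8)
The plan is to exploit only the second relation in (\ref{E280603}), namely $s_kL_m=L_ms_k$ for $m\neq k,k+1$, together with the basic fact established above that every weight space is one-dimensional (so that the weight vectors $v_\bj$ form a basis and coefficients can be compared). Since $v_\bi$ is a GZ-basis vector of the irreducible module $V=V(\bi)$ and $s_k\in\C S_n$, the vector $s_kv_\bi$ again lies in $V$, and so I can expand it in the GZ-basis of $V$, writing $s_kv_\bi=\sum_\bj c_\bj v_\bj$, where $\bj$ ranges over the weights of $V$ and $L_m v_\bj=j_m v_\bj$ for each $m$.

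The key computation is to apply $L_m$ to $s_kv_\bi$ for an index $m\neq k,k+1$ and use the commutation relation to slide $L_m$ past $s_k$:
$$
L_m(s_kv_\bi)=s_kL_mv_\bi=s_k(i_mv_\bi)=i_m(s_kv_\bi),
$$
where I have used $L_mv_\bi=i_mv_\bi$. This shows that $s_kv_\bi$ is itself an eigenvector of $L_m$ with eigenvalue $i_m$, for every $m\neq k,k+1$.

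To finish, I would apply $L_m$ to the expansion and compare: on the one hand $L_m(s_kv_\bi)=\sum_\bj c_\bj j_m v_\bj$, and on the other hand it equals $i_m\sum_\bj c_\bj v_\bj$. By linear independence of the $v_\bj$ this forces $c_\bj(j_m-i_m)=0$ for every $\bj$ and every $m\neq k,k+1$. Hence whenever $c_\bj\neq0$ we must have $j_m=i_m$ for all $m\neq k,k+1$, which is precisely the assertion of the lemma.

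I do not expect a genuine obstacle here: once the commutation relations (\ref{E280603}) are in hand, the statement is an immediate eigenvalue argument. The only point requiring care is the legitimacy of comparing coefficients, and this is guaranteed because each weight space is one-dimensional, so the $v_\bj$ appearing in the expansion are linearly independent weight vectors.
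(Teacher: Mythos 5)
Your proof is correct and is exactly the argument the paper intends: the text states that the second relation in (\ref{E280603}) ``immediately implies'' the lemma, and your eigenvalue computation $L_m(s_kv_\bi)=s_kL_mv_\bi=i_m s_kv_\bi$ for $m\neq k,k+1$, followed by comparing coefficients in the GZ-basis, is precisely that immediate implication spelled out.
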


While the role of a Cartan subalgebra is played by $A_n$, the role of  $sl_2$-subalgebras will be played by the subalgebras 
$$B_k:=\lan L_k,L_{k+1}, s_k\ran\qquad(1\leq k<n).$$ 
In view of (\ref{E280603}), every $B_k$ is a quotient of the {\em rank two degenerate affine Hecke algebra}:
$$
H_2:=\lan s,x,y\mid xy=yx, s^2=1, sx=ys-1\ran.
$$

Instead of representation theory of $sl_2$ we develop equally easy representation theory of $H_2$. We first construct some explicit $H_2$-modules. 
Fix a pair of numbers $a,b\in \mathbb{C}$. If $b=a+1$, let $L(a,b)=\mathbb{C}\cdot v$ be a $1$-dimensional vector space with the action of the generators
$$
xv=av,\ yv=bv, sv=v.
$$
Clearly, the relations are satisfied, so we have a well-defined action of $H_2$. Similarly, if $b=a-1$, we have $L(a,b)=\mathbb{C}\cdot v$ be a $1$-dimensional vector space with 
$$
xv=av,\ yv=bv, sv=-v.
$$
Finally, assume that $a\neq b\pm1$. Let $L(a,b)$ be a $2$-dimensional vector space $\C\cdot v_1\oplus \C\cdot v_2$ with the action of the generators $x,y,s$, given, respectively, by the matrices
\begin{equation}\label{E270603_2}
\left(
\begin{matrix}
a &  -1 \\
0 & b
\end{matrix}
\right),\quad
\left(
\begin{matrix}
b & 1  \\
0 & a
\end{matrix}
\right),\quad
\left(
\begin{matrix}
0 & 1  \\
1 & 0
\end{matrix}
\right).
\end{equation}
Note that if $a=b$, then $x$ and $y$ do not act on $L(a,b)$ semisimply, while if $a\neq b,b\pm1$, then we can simultaneously diagonalize $x$ and $y$, so that the  matrices of $x,y,s$ are
\begin{equation}\label{E270603_3}
\left(
\begin{matrix}
a &  0 \\
0 & b
\end{matrix}
\right),\quad
\left(
\begin{matrix}
b &  0\\
0 & a
\end{matrix}
\right),\quad
\left(
\begin{matrix}
(b-a)^{-1} & 1-(b-a)^{-2}  \\
1 & (a-b)^{-1}
\end{matrix}
\right).
\end{equation}
To achieve this, change basis from $\{v_1,v_2\}$ to $\{v_1,v_2-(b-a)^{-1}v_1\}$. If instead we change to 
\begin{equation}\label{E030703_5}
\{v_1,(1-(b-a)^{-2})^{-1/2}(v_2-(b-a)^{-1}v_1)\}, 
\end{equation}
the matrix of $s$ becomes orthogonal: 
\begin{equation}\label{E020703_5}
\left(
\begin{matrix}
(b-a)^{-1} & \sqrt{1-(b-a)^{-2}}  \\
\sqrt{1-(b-a)^{-2}} & (a-b)^{-1}
\end{matrix}
\right).
\end{equation}

It is clear that the $H_2$-modules $L(a,b)$ we have just constructed are irreducible. 
One can prove that every irreducible $H_2$-module is finite dimensional, so the finite dimensionality  assumption in the following proposition is unnecessary. 

\begin{Proposition}\label{PSimH2}%{\rm \cite{}}%{\bf ()}
We have: 
\begin{enumerate}
\item[{\rm (i)}] Every (finite dimensional) irreducible $H_2$-module is isomorphic to some $L(a,b)$.
\item[{\rm (ii)}] If $a\neq b\pm1$, then $L(a,b)\cong L(b,a)$ , and there are no other isomorphic pairs  among $\{L(a,b)\mid a,b\in\mathbb{C}\}$.
\end{enumerate} 
\end{Proposition}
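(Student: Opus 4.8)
The plan is to prove that every finite-dimensional irreducible $H_2$-module is spanned by a single common eigenvector of the commuting pair $x,y$ together with its image under $s$, so that $\dim V\le 2$ and the relations determine the action outright. First I would record the consequences of $s^2=1$ and $sx=ys-1$: left-multiplying the latter by $s$ gives $sys=x+s$, and right-multiplying by $s$ gives $sy=xs+1$, so that $xs=sy-1$ and $ys=sx+1$. These identities let me move $x$ and $y$ past $s$.

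Let $V$ be finite-dimensional and irreducible. Since $x,y$ commute and $\C$ is algebraically closed, they share an eigenvector $v$, say $xv=av$ and $yv=bv$. Using the identities above I compute $x(sv)=(xs)v=(sy-1)v=b\,(sv)-v$ and $y(sv)=(ys)v=(sx+1)v=a\,(sv)+v$, while $s(sv)=v$. Thus $\spa(v,sv)$ is stable under $x,y,s$, hence is an $H_2$-submodule, and irreducibility forces $V=\spa(v,sv)$; in particular $\dim V\le 2$.

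I then split on whether $sv\in\C v$. If it is, $s^2=1$ gives $sv=\pm v$, and substituting into $x(sv)=b\,(sv)-v$ forces $b=a+1$ (when $sv=v$) or $b=a-1$ (when $sv=-v$); these recover the one-dimensional $L(a,b)$. If $sv\notin\C v$, then $\{v,sv\}$ is a basis and the formulas above give exactly the matrices defining $L(a,b)$, so $V\cong L(a,b)$; it remains to check $a\ne b\pm1$, which I get by contradiction, since if $b=a\pm1$ the line $\spa(v\mp sv)$ is easily verified to be $H_2$-stable, contradicting irreducibility. This establishes (i).

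For (ii) I would produce the isomorphism $L(a,b)\cong L(b,a)$ by rerunning the construction of (i) from the \emph{other} weight vector: when $a\ne b$ the operator $x$ on $L(a,b)$ has distinct eigenvalues $a,b$, so there is a common eigenvector $w$ with $xw=bw$ and $yw=aw$, and the construction applied to $w$ identifies $L(a,b)$ with $L(b,a)$ (the case $a=b$ being vacuous). For the claim that there are no further coincidences, the invariant to use is the unordered pair of simultaneous $(x,y)$-eigenvalues $\{(a,b),(b,a)\}$, together with $\dim V$ and, for the one-dimensional modules, the scalar by which $s$ acts; comparing these forces $(a',b')\in\{(a,b),(b,a)\}$ and keeps the two one-dimensional families apart. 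The main obstacle is entirely in this last step: since $x,y$ fail to be semisimple when $a=b$, I must phrase \textquotedblleft the weights are an isomorphism invariant\textquotedblright\ through the characteristic polynomial of $x$ (or generalized eigenspaces) rather than honest eigenvalues, and simultaneously confirm that the degenerate two-dimensional modules with $b=a\pm1$ are genuinely reducible, so that the irreducible list is exactly $\{L(a,b)\}$ as stated.
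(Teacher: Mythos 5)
Your proposal is correct and follows essentially the same route as the paper: take a simultaneous eigenvector $v$ for the commuting pair $x,y$, split on whether $sv$ is proportional to $v$, and read off the one- and two-dimensional modules, then distinguish non-isomorphic $L(a,b)$'s by the action of the commutative subalgebra $\langle x,y\rangle$ (your characteristic-polynomial/$s$-scalar invariants amount to the paper's ``restrict to $\langle x,y\rangle$''). Your construction of the isomorphism $L(a,b)\cong L(b,a)$ by passing to the other common eigenvector is exactly the diagonalization the paper invokes via its formulas (\ref{E270603_3}) and (\ref{E020703_5}), so nothing essentially new or missing there.
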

\begin{proof}
(i) Let $V$ be a finite dimensional irreducible $H_2$-module. There exists $v\in V$ which is a simultaneous eigenvector for $x$ and $y$.  So
$xv=av$, $yv=bv$ for some $a,b\in \mathbb{C}$. If $sv$ is proportional to $v$, then $V=\mathbb{C}v$, and we must have $sv=\pm v$, as $s^2=1$. This immediately leads to $b=a\pm1$ and $V=L(a,b)$. If $sv$ is not proportional to $v$, then $\{v,sv\}$ must be a basis of $V$, which leads to the formulas (\ref{E270603_2}), but these formulas  determine an irreducible module only if  $a\neq b\pm1$. 

(ii) That no other pairs are isomorphic is clear, because if $L(a,b)$ and $L(c,d)$ are isomorphic, then their restrictions to subalgebras $\lan x,y\ran$, generated by $x$ and $y$, are isomorphic. Finally, if $a\neq b,b\pm 1$, it is easy to write down an explicit isomorphism between $L(a,b)$ and $L(b,a)$ using the formulas (\ref{E020703_5}).
\end{proof}

\begin{Corollary}\label{C010703}%{\rm \cite{}}%{\bf ()}
Let $\bi\in W(n)$, $V=V(\bi)$, $1\leq k<n$, and $$\bj:=s_k\bi=(i_1,\dots,i_{k-1},i_{k+1},i_k,i_{k+2},\dots,i_n).$$ Then:
\begin{enumerate}
\item[{\rm (i)}] $i_k\neq i_{k+1}$.
\item[{\rm (ii)}] If $i_{k+1}=i_k\pm 1$ then $s_k v_\bi=\pm v_\bi$ and $\bj$ is not a weight of $V$.
\item[{\rm (iii)}] Let $i_{k+1}\neq i_k\pm 1$. Then $\bj$ is a weight of $V$. Moreover, the vector
$
w:=(s_i-(i_{k+1}-i_k)^{-1})v_\bi
$
is a non-zero vector of weight $\bj$,  the elements $L_k,L_{k+1},s_k$ leave $X:=\operatorname{span}(v_\bi,w)$ invariant, and act in the basis $\{v_\bi,w\}$ of $X$ with matrices (\ref{E270603_3}), respectively.
\end{enumerate}
\end{Corollary}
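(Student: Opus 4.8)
The plan is to localise the problem to the subalgebra $B_k=\lan L_k,L_{k+1},s_k\ran$, exploiting that $B_k$ is a quotient of $H_2$ (with $x=L_k$, $y=L_{k+1}$, $s=s_k$), so that $V$ becomes an $H_2$-module, and then to run everything through an invariant inner product. First I would fix an $S_n$-invariant inner product $(\cdot,\cdot)$ on $V$ (Exercise~\ref{ExScPr}). Since every transposition acts as a unitary involution, each of $L_k$, $L_{k+1}$, $s_k$ is self-adjoint, the operator $s_k$ is a self-adjoint involution, and distinct weight spaces (being eigenspaces of the self-adjoint $L_k$) are orthogonal. Write $a=i_k$, $b=i_{k+1}$ (both real, as eigenvalues of self-adjoint operators) and normalise $(v_\bi,v_\bi)=1$.

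The engine of the proof is one vector and one norm computation. Assuming $a\neq b$ (justified below), set
\[
w:=\bigl(s_k-(b-a)^{-1}\bigr)v_\bi .
\]
Using the relations (\ref{E280603}) one checks directly that $L_kw=bw$, $L_{k+1}w=aw$, and $L_mw=i_mw$ for $m\neq k,k+1$; hence $w$ is a (possibly zero) vector of weight $\bj=s_k\bi$, and in particular $s_kv_\bi=(b-a)^{-1}v_\bi+w$ lies in $\C v_\bi\oplus V_\bj$. As $\bj\neq\bi$, the vector $w$ is orthogonal to $v_\bi$, which forces $(s_kv_\bi,v_\bi)=(b-a)^{-1}$; expanding $\|w\|^2$ and using $\|s_kv_\bi\|=\|v_\bi\|$ together with the self-adjointness of $s_k$ yields the clean identity
\[
\|w\|^2=\bigl(1-(b-a)^{-2}\bigr)\,\|v_\bi\|^2 .
\]

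Now the three parts fall out. For (i), if $i_k=i_{k+1}=a$ then $w$ is not defined, so instead I look at $X'=\spa(v_\bi,s_kv_\bi)$, which is $B_k$-invariant by (\ref{E280603}) and hence an $H_2$-module on which the self-adjoint $L_k$ acts semisimply; but by Proposition~\ref{PSimH2} an $H_2$-module generated by a simultaneous $x,y$-eigenvector with both eigenvalues equal to $a$ is either a one-dimensional module (which cannot exist, as one-dimensional modules have $y$-eigenvalue $=x$-eigenvalue $\pm1$) or the two-dimensional $L(a,a)$ of (\ref{E270603_2}), on which $x=L_k$ is a nontrivial Jordan block — both contradict semisimplicity. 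Hence $i_k\neq i_{k+1}$, so $w$ is legitimate and $\|w\|^2\ge0$ forces $(b-a)^2\ge1$. If $i_{k+1}=i_k\pm1$ then $\|w\|^2=0$, so $w=0$ and $s_kv_\bi=(b-a)^{-1}v_\bi=\pm v_\bi$, giving the sign assertion of (ii). If $i_{k+1}\neq i_k\pm1$ then $(b-a)^2>1$, so $w\neq0$: thus $\bj$ is a weight, $X=\spa(v_\bi,w)$ is $B_k$-invariant, and a one-line computation of $s_kw$ from $s_k^2=1$ shows that $L_k,L_{k+1},s_k$ act on $\{v_\bi,w\}$ by exactly the matrices (\ref{E270603_3}). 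This settles (i), (iii), and the first half of (ii).

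The main obstacle is the remaining clause of (ii): that $\bj=s_k\bi$ is \emph{not} a weight of $V$ when $i_{k+1}=i_k\pm1$. The local calculus shows only that the canonical weight-$\bj$ vector $w$ vanishes; it does not by itself exclude an \emph{independent} weight-$\bj$ vector. Indeed, writing $V^{\hat\bi}$ for the common eigenspace of the $L_m$ with $m\neq k,k+1$, one sees that $V^{\hat\bi}$ is a $B_k$-submodule on which $L_k,L_{k+1},s_k$ act self-adjointly, hence a unitarisable — and therefore semisimple — $H_2$-module; but the direct sum $L(a,a+1)\oplus L(a+1,a)$ of the two relevant one-dimensional modules is perfectly admissible, so carrying both weights $\bi$ and $\bj$ is not forbidden at the level of $B_k$ alone. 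To kill this possibility one must use the irreducibility of $V$ over all of $\C S_n$, not just over $B_k$: I would argue by induction on $n$, combining multiplicity-freeness of the branching (Theorem~\ref{TBrMF}) with the braid interaction between $s_k$ and its neighbours $s_{k\pm1}$ to rule out two weights differing by a transposition of adjacent entries with $|i_k-i_{k+1}|=1$ in a single irreducible. This global step, rather than the $H_2$-computation, is where the real difficulty lies.
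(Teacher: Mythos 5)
Your treatment of (i), of (iii), and of the eigenvalue assertion $s_kv_\bi=\pm v_\bi$ in (ii) is correct, and it takes a genuinely different route from the notes. The proof in the text also localises to $B_k$, but instead of the invariant inner product it feeds in a structural fact: by Olshanskii's Lemma (Proposition~\ref{POl}) and Lemma~\ref{L270603}, the multiplicity space $\Hom_{S_{k-1}}(V(i_1,\dots,i_{k-1}),V(i_1,\dots,i_{k+1}))$ is irreducible over the centralizer $Z_{k-1,2}=\lan B_k,Z_{k-1}\ran$, hence (by Schur) irreducible already over $B_k$; identifying it with $N=B_k\cdot v_\bi$ and inflating to $H_2$, all three parts are then read off from the classification in Proposition~\ref{PSimH2}. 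Your unitarity argument replaces that input by the elementary norm identity $\|w\|^2=\bigl(1-(b-a)^{-2}\bigr)\|v_\bi\|^2$ (with $a=i_k$, $b=i_{k+1}$), which needs only Exercise~\ref{ExScPr}; it is more self-contained, and as a bonus it is exactly the computation that reappears later in the proof of Theorem~\ref{TYOF}. One small imprecision: Proposition~\ref{PSimH2} classifies \emph{irreducible} $H_2$-modules, so in your argument for (i) you should not quote it for the module generated by an eigenvector; but the matrices (\ref{E270603_2}) on $\spa(v,sv)$ are derived from the relations alone, so your Jordan-block contradiction with the self-adjointness of $L_k$ stands as written.

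The gap you flag in the second clause of (ii) is real, and your diagnosis is exactly right: a purely local analysis of $B_k\cdot v_\bi$ cannot distinguish $V$ from a module containing $L(a,a+1)\oplus L(a+1,a)$, so it cannot show that $\bj$ fails to be a weight. But the fix is not the induction on $n$ with braid relations that you sketch; it is precisely what the centralizer mechanism buys and your inner product does not. Lemma~\ref{L270603} makes the \emph{whole} multiplicity space irreducible over $B_k$, and that space is realised inside $\C S_{k+1}\cdot v_\bi$ as the span of \emph{all} GZ-vectors of $\C S_{k+1}\cdot v_\bi$ whose weight begins with $(i_1,\dots,i_{k-1})$; in case (ii) this span is $L(a,a\pm1)$, one-dimensional, so the swapped weight cannot occur there. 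Thus the missing ingredient is Lemma~\ref{L270603} together with Olshanskii's Lemma, both of which you already use elsewhere. (To be fair, even with this one must still argue that a putative weight-$\bj$ vector of $V$ would have to lie in $\C S_{k+1}\cdot v_\bi$ rather than in some other $S_{k+1}$-constituent of $V$; the notes are silent on this point. Note also that this clause of (ii) is never invoked later --- Corollary~\ref{C010703_3} and Theorem~\ref{TYSN} use only $s_kv_\bi=\pm v_\bi$ --- so the omission does not propagate into the rest of the development.)
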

\begin{proof}
By (\ref{E010703_2}), $\C S_{k+1}\cdot v_\bi\cong V(i_1,\dots,i_{k+1})$. Consider 
$$
M:=\Hom_{S_{k-1}}(V(i_1,\dots,i_{k-1}),V(i_1,\dots,i_{k+1}))
$$ 
as a module over $Z_{k-1,2}=\lan B_k,Z_{k-1}\ran$, see Proposition~\ref{POl}. This module is irreducible by Lemma~\ref{L270603}. By Schur's Lemma, $Z_{k-1}$ acts on $M$ with scalars, so $M$ is  irreducible even as a $B_k$-module. Note that the $B_k$-module $M$ is isomorphic to the $B_k$-submodule 
$$N:=B_k\cdot v_\bi\subseteq V.$$ Inflating along the surjection $H_2\to B_k$, makes $N$ into an irreducible $H_2$-module, with $i_k$ and $i_{k+1}$ appearing as eigenvalues of $x$ and $y$, respectively. Hence $N\cong L(i_k,i_{k+1})$, see Proposition~\ref{PSimH2}. 
Now the result follows from the classification of irreducible $H_2$-modules obtained above, noting for (i) that $x$ and $y$ do not act semisimply of $L(a,a)$, so this case is impossible. 
\end{proof}

%The place permutation which leads from $\bi$ to $\mu$ in Corollary~\ref{C010703} is called an {\em admissible transposition}.

\begin{Corollary}\label{C010703_3}%{\rm \cite{}}%{\bf ()}
Let $\bi=(i_1,\dots,i_n)\in \C^n$. If $i_k=i_{k+2}=i_{k+1}\pm 1$ for some $k$, then $\bi\not\in W(n)$. 
\end{Corollary}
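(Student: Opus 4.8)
The plan is to assume, for contradiction, that $\bi\in W(n)$ and to play off the two adjacent transpositions $s_k$ and $s_{k+1}$ against each other via the braid relation. First I would set $V=V(\bi)$ and let $v_\bi$ be the corresponding weight vector, a simultaneous eigenvector of all the $L_j$. The hypothesis $i_k=i_{k+2}=i_{k+1}\pm1$ says precisely that both consecutive pairs $(i_k,i_{k+1})$ and $(i_{k+1},i_{k+2})$ differ by exactly $1$, i.e. both lie in the ``adjacent'' regime of Corollary~\ref{C010703}(ii).

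Applying Corollary~\ref{C010703}(ii) at position $k$ gives $s_kv_\bi=\eps_1 v_\bi$, and applying it at position $k+1$ (legitimate, since $i_{k+2}$ exists and hence $k+1<n$) gives $s_{k+1}v_\bi=\eps_2 v_\bi$, where each $\eps_j\in\{+1,-1\}$ is fixed by the sign convention of the one-dimensional modules $L(a,b)$: the sign is $+1$ when the second eigenvalue exceeds the first by $1$, and $-1$ when it is smaller by $1$. The crucial point is that the shape $i_k=i_{k+2}=i_{k+1}\pm1$ forces these two signs to be \emph{opposite}. Writing $i_k=i_{k+2}=i_{k+1}+s$ with $s\in\{+1,-1\}$, the pair $(i_k,i_{k+1})$ decreases by $s$ whereas $(i_{k+1},i_{k+2})$ increases by $s$, so $\eps_1=-s$ and $\eps_2=s$; thus $\eps_1=-\eps_2$.

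On the other hand, since $v_\bi$ is a common eigenvector of $s_k$ and $s_{k+1}$ with $s_k^2=s_{k+1}^2=1$, I would compute both sides of the braid relation $s_ks_{k+1}s_k=s_{k+1}s_ks_{k+1}$ (which holds in $S_n$, hence acts identically on $V$) directly on $v_\bi$, obtaining $s_ks_{k+1}s_kv_\bi=\eps_2 v_\bi$ and $s_{k+1}s_ks_{k+1}v_\bi=\eps_1 v_\bi$. Equating the two yields $\eps_1=\eps_2$. Combining $\eps_1=-\eps_2$ with $\eps_1=\eps_2$ gives $\eps_1=0$, contradicting $\eps_1\in\{\pm1\}$; hence $\bi\notin W(n)$.

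I expect the only delicate point to be the sign bookkeeping: carefully matching the $\pm$ in Corollary~\ref{C010703}(ii) to the $sv=\pm v$ convention in the construction of $L(a,b)$, and confirming that the two signs are genuinely opposite rather than equal. Everything else---that the braid relation descends to $V$, and that $v_\bi$ is a joint eigenvector of $s_k,s_{k+1}$---is immediate from the setup.
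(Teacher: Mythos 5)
Your proposal is correct and is essentially the paper's own proof: the paper likewise deduces from Corollary~\ref{C010703}(ii) that $s_kv_\bi=\pm v_\bi$ and $s_{k+1}v_\bi=\mp v_\bi$ with opposite signs and derives a contradiction with the braid relation $s_ks_{k+1}s_k=s_{k+1}s_ks_{k+1}$. Your sign bookkeeping ($\eps_1=-s$, $\eps_2=s$) matches the convention $sv=\pm v$ for $L(a,a\pm1)$, so there is no gap.
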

\begin{proof}
Otherwise, Corollary~\ref{C010703}(ii) gives $s_kv_\bi=\pm v_\bi$ and $s_{k+1}v_\bi=\mp v_\bi$, which contradicts the braid relation $s_ks_{k+1}s_k=s_{k+1}s_ks_{k+1}$.  
\end{proof}

\begin{Lemma}\label{L010703_4}%{\rm \cite{}}%{\bf ()}
Let $\bi\in W(n)$. Then
\begin{enumerate}
\item[{\rm (i)}] $i_1=0$.
\item[{\rm (ii)}] $\{i_k-1,i_k+1\}\cap\{i_1,\dots,i_{k-1}\}\neq \emptyset$ for all $1<k\leq n$. 
\item[{\rm (iii)}] If $i_k=i_m=a$ for some $k<m$ then $$\{a-1,a+1\}\subseteq\{i_{k+1},\dots,i_{m-1}\}.$$ 
\end{enumerate}
\end{Lemma}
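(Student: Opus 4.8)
The plan is to handle the three parts in order. The only substantial inputs will be Corollary~\ref{C010703} (the $H_2$-analysis of a single swap $s_k$), Corollary~\ref{C010703_3}, and the remark that a truncation of a weight is again a weight: whenever $\bi\in W(n)$ and $k\le n$, we have $(i_1,\dots,i_k)\in W(k)$, which is immediate from~(\ref{E010703_2}). Part (i) is instant, since $L_1=0$ forces $i_1=0$. For part (ii), truncating to the first $k$ coordinates reduces us to the case $k=n$, so assume for contradiction that $i_n\neq i_j\pm 1$ for every $j<n$. Then I transport $i_n$ to the front: when $i_n$ currently sits in position $j+1$ with the original $i_j$ in position $j$, the swap $s_j$ is legal by Corollary~\ref{C010703}(iii) (the relevant eigenvalues $i_j,i_n$ differ by something other than $\pm1$), and it produces a weight of the same module with $i_n$ moved one step left. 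Performing this for $j=n-1,\dots,1$ shows $(i_n,i_1,\dots,i_{n-1})$ is a weight; by part (i) its leading entry is $0$, so $i_n=0=i_1$, whence its first two entries coincide, contradicting Corollary~\ref{C010703}(i).

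Part (iii) is the heart of the matter. First I reduce to \emph{consecutive} occurrences of $a$: given $i_k=i_m=a$, list all occurrences $k=p_0<p_1<\dots<p_t=m$ of the value $a$ in positions $k,\dots,m$; it suffices to prove that for each consecutive pair $(p_j,p_{j+1})$ both $a-1$ and $a+1$ occur strictly between them, and then read off the conclusion from $(p_0,p_1)$. Accordingly I call a pair of positions $p<q$ in a weight $\bv$ (of any symmetric group) a \emph{bad pair} if $v_p=v_q=a$, no entry strictly between equals $a$, and at least one of $a+1$, $a-1$ fails to occur strictly between. The whole of (iii) follows once I show that bad pairs never exist, and I will prove this by induction on the distance $d=q-p$.

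The base case $d=2$: here the single middle entry $b=v_{p+1}$ satisfies $b\neq a$. In the branch where $a+1$ is the missing value, if $b=a-1$ then the pattern $a,a-1,a$ is forbidden by Corollary~\ref{C010703_3}, while if $b\neq a-1$ then $b\neq a,a\pm1$, so $s_{p+1}$ legally swaps $b$ past the second $a$ (Corollary~\ref{C010703}(iii)) and creates two equal adjacent entries, impossible by Corollary~\ref{C010703}(i); the missing-$(a-1)$ branch is symmetric. For the inductive step take $d\ge 3$ and suppose $a+1$ is missing (the other case is symmetric under interchanging $+1$ and $-1$). Inspect $v_{q-1}$: if $v_{q-1}\neq a-1$, then since also $v_{q-1}\neq a,a+1$ the swap $s_{q-1}$ is legal and moves the second $a$ one step left, producing a bad pair of distance $d-1$; symmetrically, if $v_{p+1}\neq a-1$ the swap $s_p$ moves the first $a$ one step right, again giving distance $d-1$. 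Either way the inductive hypothesis is contradicted.

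The genuinely stuck configuration is $v_{p+1}=v_{q-1}=a-1$, with the second $a$ blocked on its left and the first $a$ blocked on its right by copies of $a-1$; this is the main obstacle. When $d=3$ the entries $v_{p+1}$ and $v_{q-1}$ are adjacent and equal, excluded by Corollary~\ref{C010703}(i). When $d\ge 4$ the device is to switch attention to the value $a-1$: the first two occurrences of $a-1$ inside $\{p+1,\dots,q-1\}$ form a pair of equal entries at distance $\le d-2$, with no $a-1$ between them and with $a=(a-1)+1$ absent between them (no $a$ occurs between $p$ and $q$), i.e.\ a bad pair of strictly smaller distance, contradicting the inductive hypothesis. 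Thus being blocked by $a-1$ on both sides manufactures a shorter bad pair for the neighbouring value, which keeps the induction on distance well-founded and completes the proof.
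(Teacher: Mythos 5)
Your proof is correct and follows essentially the same route as the paper: part (ii) by moving the offending entry to the front with admissible transpositions, and part (iii) by a minimal-counterexample/induction-on-distance argument that uses Corollary~\ref{C010703} to shrink the gap and Corollary~\ref{C010703_3} to kill the pattern $a,a\pm1,a$, with the blocked configuration handled by passing to a closer violating pair for the neighbouring value $a\mp1$. Your write-up simply makes explicit the details that the paper's one-line treatment of (iii) leaves to the reader.
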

\begin{proof}
(i) is clear as $L_1=0$. 

If (ii) fails, apply Corollary~\ref{C010703}(iii) repeatedly to swap $i_k$ with $i_{k-1}$, then with $i_{k-2}$, etc., all the way to the second position. Now, if $i_k=0$, we get a weight which starts with two $0$'s, which contradicts Corollary~\ref{C010703}(i). Otherwise, again by Corollary~\ref{C010703}(iii), we can move $i_k$ to the first position, which contradicts (i). 

If (iii) fails, let us pick $k,m$ with the minimal $m-k$ for which this happens. By Corollaries~\ref{C010703}(i),(iii) and~\ref{C010703_3}, we have
$$
\bi=(\dots,a,a\pm1,\dots,a\pm1, a,\dots),
$$ 
which contradicts the minimality of $m-k$. 
%By the choice of $m-k$, there must be both $a\pm1+1$ and $a\pm1-1$ between the two entries $a\pm1$ in $\bi$. So there is an entry equal to $a$ in between, which again contradicts the minimality of $m-k$. 
\end{proof}

For any $n \geq 0$, let $\la=(\la_1,\la_2,\dots)$ be a {\em partition of $n$}, 
i.e. a weakly decreasing sequence
of non-negative integers summing to $n$. Let ${\mathcal P}(n)$ 
denote the set of all partitions
of $n$. Set 
$${\mathcal{P}} := \bigcup_{n \geq 0} {\mathcal{P}}(n).$$
We identify a partition $\la$ with its {\em Young diagram} 
$$
\la=\{(r,s)\in\Z_{>0}\times\Z_{>0}\mid s\leq\la_r\}.
$$
Elements $(r,s)\in\Z_{>0}\times\Z_{>0}$ are called {\em nodes} or {\em boxes}. 
%Recall the set $I$ from (\ref{EI}). So $I=\Z/p\Z$ if $p$ is positive, and $I=\Z$ if $p=0$.  
We label the nodes of $\la$ with {\em contents}, which are elements 
of $\Z$. 
By definition, the content of the node $(r,s)$ is $s-r$. 
The content of the node $A$ is denoted $\Res A$. 
%Define the {\em residue content} of $\la$ to be the tuple
%\begin{equation}\label{EResidueContent}
%\cont(\la)=(\gamma_i)_{i \in I},
%\end{equation}
%where for each $i \in I$, $\gamma_i$ is the number of nodes of residue $i$ contained in the diagram $\la$. 

Let $i \in \Z$.
A node $A = (r,s)
\in \la$ is called {\em $i$-removable} (resp. {\em $i$-addable}) for $\la$ 
if $\Res A=i$ and $\la_A:=\la\setminus\{A\}$ (resp. $\la^A:=\la\cup\{A\}$) is a Young diagram of a partition. 
A node is called {\em removable} (resp. {\em addable}) if it is $i$-removable (resp. $i$-addable) for some $i$. 
Thus, for example, a removable node is always of the form $(m,\la_m)$ with $\la_m>\la_{m+1}$. 

Let $\la$ be a partition of $\la$. An allocation of numbers $1,\dots,n$ into the boxes of $\la$ (one number into one box) is called a {\em $\la$-tableau}. A $\la$-tableau is called {\em standard} if the numbers increase from top to bottom along the columns of $\la$ and from left to right along the rows. 

For any $\la$-tableau $T$, let $T_k$ be the box occupied by $k$ in $T$, and  
$$\bi^T:=(\Res T_1,\dots,\Res T_n)\in\Z^n.$$ 
The symmetric group $S_n$ acts on the set of {\em all} $\la$-tableaux by acting on the entries of the tableaux. Recall that it also acts on $n$-tuples of numbers by place permutations. Then we have:
$$
w\bi^T=\bi^{w T}\qquad(w\in S_n).
$$

Define the {\em Young graph}\, $\mathbb{Y}$ as a directed graph with the set $\mathcal{P}$ of all partitions as its set of vertices; moreover, for $\la,\mu\in \mathcal{P}$ we have $\mu\to\la$ if and only if $\mu=\la_A$ for some removable node $A$ for $\la$. 
%Fix $\la\in\mathcal{P}(n)$. A {\em standard $\la$-tableau} is an allocation of numbers $1,2,\dots,n$ into the boxes of the Young diagram $\la$. 
A path in $\mathbb{Y}$ ending in $\la$ will be referred to as a  {\em $\la$-path}.
Thus a $\la$-path $T$ can be thought of as a sequence of nodes $T_1,\dots,T_n$ of $\la$ such that $T_n$ is removable for $\la$, $T_{n-1}$ is removable for $\la_{T_n}$, etc. 
%To indicate that there are $n$ nodes, we will say that $T$ has {\em length $n$}. (Clearly, an $\la$-path is the same as a standard $\la$-tableau, but we will not use this terminology). 
If, for all $1\leq k\leq n$, we place the number $k$ into the box $T_k$, we get a {\em standard $\la$-tableau}. 
In this way we get a one-to-one correspondence between  $\la$-paths in $\mathbb{Y}$ and standard $\la$-tableaux. We will not distinguish between the two.

\begin{Example}%\label{}%{\rm \cite{}}%{\bf ()}
{\rm 
If $\la=(4,2,1)$, an example of a standard $\la$-tableau is given by 
$$
%\hoogte=6pt    \newdimen\breedte   \breedte=7pt  
%\newdimen\dikte     \dikte=0.5pt 
T=\diagram{
1& 2& 4 &5 \cr
3&7\cr
6\cr
}
$$
In this case $\bi^T=(0,1,-1,2,3,-2,0)$.
}
\end{Example}

Set
\begin{equation}%\label{}
W'(n):=\{\bi^T\mid \text{$T$ is a standard $\la$-tableau for some $\la\in\mathcal{P}(n)$}\}.
\end{equation}
Note that the shape $\la$ of $T$ can be recovered from the tuple $\bi^T$: the amount of $a$'s among the $i_k$ is the amount of nodes on the $a$th diagonal of the Young diagram $\la$.  So the $n$-tuples $\bi,\bj\in W'(n)$ come from standard tableaux of the same shape if and only if $\bi$ can be obtained from $\bj$ by a place permutation, in which case we write $\bi\sim\bj$.

\begin{Lemma}\label{L010703_8}%{\rm \cite{}}%{\bf ()}
The set $W'(n)$ is precisely the set of all $n$-tuples $\bi\in\C^n$ which satisfy the properties (i)-(iii) of Lemma~\ref{L010703_4}. In particular, $W(n)\subseteq W'(n)$.
\end{Lemma}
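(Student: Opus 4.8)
The plan is to prove the sharper statement $W'(n)=\mathcal{C}(n)$, where I write $\mathcal{C}(n)$ for the set of all $\bi\in\C^n$ satisfying conditions (i)--(iii) of Lemma~\ref{L010703_4}; the final assertion $W(n)\subseteq W'(n)$ then follows at once, since Lemma~\ref{L010703_4} says exactly that $W(n)\subseteq\mathcal{C}(n)$. I would establish the set equality by two inclusions. Note first that $\mathcal{C}(n)\subseteq\Z^n$: by (i) we have $i_1=0$, and (ii) forces each $i_k$ to be an integer neighbour of a previous entry, so integrality follows by induction on $k$.

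For $W'(n)\subseteq\mathcal{C}(n)$ I would argue by elementary Young-diagram geometry. Given a standard $\la$-tableau $T$ with $\bi^T=(i_1,\dots,i_n)$, the box $T_1$ must be $(1,1)$, giving $i_1=0$, which is (i). For (ii), the boxes filled by $1,\dots,k-1$ form a sub-diagram and $T_k=(r,s)\neq(1,1)$ is an addable corner of it; if $r>1$ the box above has content $i_k+1$ and lies in that sub-diagram, while if $s>1$ the box to the left has content $i_k-1$ and lies in it, so one of $i_k\pm1$ occurs among $i_1,\dots,i_{k-1}$. For (iii), if $i_k=i_m=a$ with $k<m$ then $T_k,T_m$ lie on the diagonal of content $a$, with $T_k=(p,p+a)$ strictly northwest of $T_m=(q,q+a)$, $q\geq p+1$; then $\la_{p+1}\geq\la_q\geq q+a\geq p+1+a$, so $(p+1,p+1+a)\in\la$, whence its left neighbour $(p+1,p+a)$ (content $a-1$) and upper neighbour $(p,p+1+a)$ (content $a+1$) lie in $\la$ and, by standardness, carry entries strictly between $k$ and $m$. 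This gives (iii).

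For the reverse inclusion $\mathcal{C}(n)\subseteq W'(n)$ I would induct on $n$, the base being trivial. The key structural observation, proved by induction on the prefix length using (i) and (ii), is that for every $\ell$ the set $\{i_1,\dots,i_\ell\}$ is an interval of consecutive integers containing $0$; in particular the first occurrence of a value $a\geq1$ is preceded by an occurrence of $a-1$, and symmetrically for $a\leq-1$. Given $\bi\in\mathcal{C}(n)$, its truncation $\bi'=(i_1,\dots,i_{n-1})$ still satisfies (i)--(iii), so by induction $\bi'=\bi^{T'}$ for a standard tableau $T'$ of some shape $\mu\in\mathcal{P}(n-1)$. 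It then suffices to show that $\mu$ admits an addable node of content $a:=i_n$; placing $n$ in it yields a standard tableau $T$ (since $n$ is the largest entry and the node is an addable corner) with $\bi^T=\bi$.

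This last point is the main obstacle, and I would handle it via diagonal counts. Writing $m_d=\#\{k<n:i_k=d\}$ for the number of boxes of $\mu$ of content $d$, the boxes of $\mu$ on each diagonal form a contiguous run from the top, so there is at most one candidate addable node of content $a$, and a short shape calculation shows it exists precisely when, for $a\geq1$, $m_{a-1}\geq m_a+1$ and $m_{a+1}\geq m_a$ (with the roles of $a\pm1$ reversed for $a\leq-1$, and $m_{\pm1}\geq m_0$ for $a=0$). I would read these inequalities off from (iii): listing the occurrences of $a$ among $i_1,\dots,i_n$ as $t_1<\dots<t_p=n$ (so $p=m_a+1$), condition (iii) places an $a-1$ and an $a+1$ in each of the $p-1$ gaps between consecutive $a$'s, giving $m_{a+1}\geq m_a$ and $m_{a-1}\geq m_a$; the extra ``$+1$'' on the appropriate side comes from the seeding observation above (an $a-1$ before the first $a$ when $a\geq1$, an $a+1$ when $a\leq-1$). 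The care needed lies in the orientation asymmetry between positive and negative contents and in verifying the partition-shape addability condition; everything else is bookkeeping.
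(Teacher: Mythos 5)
Your proposal is correct, and there is nothing in the paper to compare it against: the paper's ``proof'' consists of the single sentence ``Easy combinatorial exercise, see Exercise~\ref{ExW'},'' so you have in effect supplied the details that the author delegates to the reader. Both inclusions check out. The forward direction is the standard Young-diagram geometry (the sub-diagram filled by $1,\dots,k-1$ has $T_k$ as an addable corner, and for (iii) the two boxes $(p+1,p+a)$ and $(p,p{+}1{+}a)$ do lie in $\la$ and carry entries strictly between $k$ and $m$ by row/column monotonicity). In the reverse direction, the two genuinely delicate points are exactly the ones you isolate and handle correctly: (a) the interval property of $\{i_1,\dots,i_\ell\}$, which follows by induction from (i) and (ii) and supplies the ``seed'' occurrence of $a-1$ before the first $a$ when $a\geq 1$ (resp.\ of $a+1$ when $a\leq -1$); and (b) the translation of addability of a content-$a$ corner into the diagonal counts $m_{a-1}\geq m_a+1$, $m_{a+1}\geq m_a$ (for $a\geq 1$; mirrored for $a\leq -1$; $m_{\pm 1}\geq m_0$ for $a=0$), which then follow from (iii) applied to consecutive occurrences of $a$ together with the seed. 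One small remark: in the induction you should also note (as you implicitly do) that the shape $\mu$ is determined by the truncated tuple alone, so the addability question is well posed independently of which standard tableau $T'$ the inductive hypothesis produces; this is the observation, recorded just before the lemma in the text, that the number of $a$'s among the entries equals the number of nodes on the $a$th diagonal.
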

\begin{proof}
Easy combinatorial exercise, see Exercise~\ref{ExW'}.
\end{proof}

%\begin{Lemma}%\label{}%{\rm \cite{}}%{\bf ()}
%Let $T$  and $T'$ be two paths in $\mathbb{Y}$ with the same end $\la$. Then we can get from $\bi^T$ to $\bi^T'$ by successive admissible transpositions.
%\end{Lemma}
%\begin{proof}
%It suffices to show that if $T$ is an arbitrary path in $\mathbb{Y}$ which ends in $\la=(\la_1\geq\la_2\geq\dots\geq\la_k)$
%\end{proof}

If $\bi=(i_1,\dots,i_n)\in\C^n$, and $i_k\neq i_{k+1}\pm 1$, then a place permutation which swaps $i_k$ and $i_{k+1}$ will be called an {\em admissible transposition}. If $\bi=\bi^T$ for a standard  tableau $T$, then an admissible transposition amounts to swapping $k$ and $k+1$ that do not lie on adjacent diagonals in $T$. It is clear that such a swap always transforms a standard $\la$-tableaux to a standard $\la$-tableaux. 

Let $\la=(\la_1\geq\la_2\geq\dots\geq\la_k)\in\mathcal{P}(n)$. We define the corresponding {\em canonical $\la$-tableau}
$T(\la)$ to be the $\la$-tableau obtained by filling in the numbers $1,2,\dots,n$ from left to right along the rows, starting from the first row and going down. 
%as follows: fill in the first row of $\la$ with $1,2,\dotsstart from the empty partition and add $\la_1$ nodes to fill in the first row of $\la$, then add $\la_2$ nodes to fill in the second row, and so on, finally filling in the $k$th row. 

\begin{Lemma}\label{020703}%{\rm \cite{}}%{\bf ()}
Let $\la\in\mathcal{P}(n)$. If $T$ is  a standard $\la$-tableau, then there is a  series of admissible transpositions which moves $T$ to $T(\la)$. Moreover, these transpositions $s_{k_1},s_{k_2}, \dots,s_{k_\ell}$ can be chosen in such a way that $\ell=\ell(s_{k_1}s_{k_2} \dots s_{k_\ell})$.
\end{Lemma}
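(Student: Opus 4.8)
The plan is to translate the sorting problem into the weak order on $S_n$. To each standard $\la$-tableau $T$ I associate the permutation $\pi_T\in S_n$ whose one-line notation is the \emph{reading word} of $T$ (the entries of $T$ listed in row-major order: row by row, left to right within each row). Then $\pi_{T(\la)}$ is the identity, and applying an admissible transposition $s_k$ to $T$ (interchanging the entries $k$ and $k+1$) replaces $\pi_T$ by $s_k\pi_T$, since swapping the two values $k,k+1$ everywhere in the reading word is exactly left multiplication by $s_k$. Thus moving $T$ to $T(\la)$ by admissible transpositions amounts to writing $\pi_T^{-1}$ as a product of simple reflections, each of which must be admissible for the current tableau. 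I would argue by induction on $\ell(\pi_T)$: if $\pi_T\neq e$ I produce a left descent $s_k$ (so that $\ell(s_k\pi_T)<\ell(\pi_T)$) which is admissible, apply it to pass to a standard tableau of strictly smaller associated length, and iterate down to $T(\la)$.

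The heart of the matter is the claim that \textbf{every left descent of $\pi_T$ is automatically admissible}; this is the step carrying the real content, though it turns out to be short. A left descent $s_k$ means the value $k$ occurs later than $k+1$ in the reading word. Since $T$ is standard, $k$ and $k+1$ cannot lie in the same row (the right-hand entry would be larger), nor in the same column, since $k+1$ lying above $k$ would force $k+1<k$; hence $k$ lies in a \emph{strictly lower} row than $k+1$. Write $T_k=(r,s)$ and $T_{k+1}=(r',s')$ with $r>r'$. The case $s>s'$, with $k$ strictly south-east of $k+1$, is impossible: the box $(r,s')$ (which lies in $\la$ since $\la$ is left-justified) is in the same column as $k+1$ and strictly below it, and in the same row as $k$ and strictly to its left, so its entry $x$ satisfies $k+1<x<k$, a contradiction. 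Therefore $s<s'$, and then $\Res T_k-\Res T_{k+1}=(s-s')-(r-r')\leq -2$, so in particular $\Res T_k\neq \Res T_{k+1}\pm 1$ and $s_k$ is admissible.

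With this in hand the induction runs smoothly. If $T\neq T(\la)$ then $\pi_T\neq e$ has a left descent $s_k$, which is admissible and therefore sends $T$ to another standard $\la$-tableau $T'$ (admissible swaps preserve standardness) with $\ell(\pi_{T'})=\ell(s_k\pi_T)=\ell(\pi_T)-1$. After $\ell:=\ell(\pi_T)$ steps we reach $T(\la)$, having applied admissible transpositions $s_{k_1},\dots,s_{k_\ell}$ in this order. For the ``moreover'' I observe that $s_{k_\ell}\cdots s_{k_1}\pi_T=e$, hence $\pi_T=s_{k_1}\cdots s_{k_\ell}$; being an expression of length $\ell=\ell(\pi_T)$, it is automatically reduced, so $\ell=\ell(s_{k_1}s_{k_2}\cdots s_{k_\ell})$ as required. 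The only points needing care are the bookkeeping of the left-versus-right multiplication convention and the remark that dropping $\ell(\pi_T)$ by exactly one at each step forces the resulting word to be reduced.
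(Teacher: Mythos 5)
Your proof is correct, and it takes a genuinely different route from the paper. The paper argues by induction on the shape: it locates the entry $k$ sitting in the last box of the last row of $\la$, observes that $k$ and $k+1$ cannot lie on adjacent diagonals (so the swap is admissible), bumps that entry up to $n$ by successive admissible swaps, then deletes the corner box and recurses on the smaller shape; the reducedness of the resulting word is only asserted at the end. You instead encode $T$ by the permutation $\pi_T$ (equivalently, the $w$ with $T=w\cdot T(\la)$), prove the cleaner and more general key lemma that \emph{every} left descent of $\pi_T$ is an admissible transposition --- the paper only needs, and only checks, this for the particular descent created by the corner entry --- and then induct on $\ell(\pi_T)$. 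The content inequality $\Res T_k-\Res T_{k+1}\le -2$ in your descent lemma is the same combinatorial mechanism that makes the paper's corner swaps admissible, so the two arguments share their engine; but your organization buys a rigorous, essentially free proof of the ``moreover'' clause (a word of length $\ell(\pi_T)$ multiplying $\pi_T$ to the identity is automatically reduced), which is the part the paper's proof leaves as a one-line remark. The paper's version, in exchange, is a more explicitly algorithmic recursion on the Young diagram. One small point to keep straight, which you do: the admissible swaps produced by your induction are applied in the order $s_{k_1},\dots,s_{k_\ell}$ and satisfy $w=s_{k_1}\cdots s_{k_\ell}$, which is exactly the form of reduced decomposition used later in \eqref{E030703_2}.
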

\begin{proof}
Let $A$ be the last box of the last row of $\la$. In $T(\la)$, the box $A$ is occupied by $n$. In $T$, the box $A$ is occupied by some number $k $. Note also that in $T$, the numbers 
$k +1$ and $k $ do not lie on adjacent diagonals. 
So we can apply an admissible transposition to swap $k $ and $k +1$, then to swap $k +1$ and $k +2$, etc. As a result, we get a new standard $\la$-tableau in which $A$ is occupied by $n$. Next, remove $A$ together with $n$, and apply induction. Finally, note that this procedure  yields a reduced word.
%Let $T=(A_1,\dots,A_n)$, and $T(\la)=(B_1,\dots,B_n)$. Let $A_i$ be the last node of the last row of $\la$.  Then $A_{i+1}$ and $A_i$ do not lie on adjacent diagonals. So we can apply an admissible transposition to swap $A_i$ and $A_{i+1}$, then to swap $A_{i}$ and $A_{i+2}$, etc. As a result, we get a new path $(A_1',\dots,A_n')$ such that $A_n'=B_n$. Next, take care of the $(n-1)$st position, etc. Note that the procedure yields a reduced word.
\end{proof}

\begin{Lemma}\label{L010703_9}%{\rm \cite{}}%{\bf ()}
If $\bi\in W'(n)$ and $\bi\sim\bj$ for some $\bj\in W(n)$, then $\bi\in W(n)$ and $\bi\approx\bj$.
\end{Lemma}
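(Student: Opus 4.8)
The plan is to observe that, on weights coming from standard tableaux of a fixed shape $\la$, an admissible transposition is exactly the operation governed by Corollary~\ref{C010703}, and to use this to transport the property of ``being a weight'' — together with the $\approx$-class — across all such tableaux. First I would set up the dictionary. Since $\bj\in W(n)\subseteq W'(n)$ (Lemma~\ref{L010703_8}) and $\bi\in W'(n)$, write $\bj=\bi^S$ and $\bi=\bi^T$ for standard tableaux $S,T$; the hypothesis $\bi\sim\bj$ means these have a common shape $\la$ (the shape being read off from the tuple as noted above). On the level of tuples, interchanging the entries $k$ and $k+1$ of a standard $\la$-tableau is the place permutation $s_k$, and the requirement that $k,k+1$ occupy non-adjacent diagonals is precisely $i_k\neq i_{k+1}\pm1$, i.e. the admissibility hypothesis of Corollary~\ref{C010703}(iii).

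The engine is that corollary: if $\bk\in W(n)$ is a weight of $V(\bk)$ and $k_j\neq k_{j+1}\pm1$, then $s_j\bk$ is again a weight of the \emph{same} module $V(\bk)$, so $s_j\bk\in W(n)$ and $s_j\bk\approx\bk$. Crucially this is bidirectional: the condition $|k_j-k_{j+1}|\neq1$ is unchanged by the swap and $s_j^2=1$, so an admissible transposition is its own inverse and connects two weights of the same module in either direction. Consequently, along any chain of admissible transpositions, membership in $W(n)$ and the $\approx$-class are both constant, provided at least one tuple in the chain is known to lie in $W(n)$.

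Now I would run the argument through the canonical tableau. By Lemma~\ref{020703}, $S$ is joined to $T(\la)$ by a sequence of admissible transpositions, and so is $T$; hence $\bi$ and $\bj$ are joined to $\bi^{T(\la)}$, and therefore to each other, by chains of admissible transpositions. Since $\bj\in W(n)$, the previous paragraph propagates this all the way along the chain: first $\bi^{T(\la)}\in W(n)$ with $\bi^{T(\la)}\approx\bj$, and then $\bi\in W(n)$ with $\bi\approx\bi^{T(\la)}\approx\bj$, which is the claim.

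The one point demanding care — the ``hard part'' — is the direction of Corollary~\ref{C010703}(iii): it is phrased as applying $s_k$ to a \emph{given} weight, whereas in transporting the property from $\bi^{T(\la)}$ back down to $\bi$ I must recognize the preimage as a weight. This is exactly what the symmetry of the admissibility condition together with $s_k^2=1$ secures, since each backward step of the chain is literally a forward application of the corollary to the weight already reached. The remaining content is bookkeeping: matching Lemma~\ref{020703}'s tableau moves with place permutations on tuples, and checking that the $\approx$-class is carried unchanged at every step.
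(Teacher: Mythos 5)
Your proposal is correct and follows the same route as the paper: write $\bi=\bi^T$, $\bj=\bi^S$ with $S,T$ of common shape $\la$, connect them through $T(\la)$ by admissible transpositions via Lemma~\ref{020703}, and propagate membership in $W(n)$ and the $\approx$-class along the chain using Corollary~\ref{C010703}(iii). Your explicit attention to the bidirectionality of an admissible transposition is a detail the paper leaves implicit, but it is the same argument.
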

\begin{proof}
%If $\nu=(\nu_1,\dots,\nu_n)\in\C^n$, and $\nu_i\neq \nu_{i+1}\pm 1$, then a place permutation which swaps $\nu_i$ and $\nu_{i+1}$ will be called an {\em admissible transposition}. If $\nu=\bi^T$ for a path $T=(A_1,\dots,A_n)$, then an admissible transposition amounts to swapping $A_i$ and $A_{i+1}$ that do not lie on adjacent diagonals. It is clear that such a swap always moves a path to a path.
By definition, $\bi=\bi^T$ for some standard tableau $T$.  
By Lemma~\ref{L010703_8}, $\bj=\bi^S$. As $\bi\sim\bj$, the tableaux $S$ and $T$ have the same shape. In view of Corollary~\ref{C010703}(iii), it suffices to show that we can go from $\bi^S$ to $\bi^T$ by a chain of admissible transpositions. But this follows from Lemma~\ref{020703}.
%Let $T_0$ be the path obtained as follows. Start from the empty partition and add $\la_1$ nodes to fill in the first row of $\la$, then add $\al_1$ nodes to fill in the second row, and so on, finally filling in the $k$th row. It now suffices to prove that if $T$ is an arbitrary path in $\mathbb{Y}$ which ends in $\al$, then by a series of admissible transpositions we can move $\bi^T$ to $\bi_{T_0}$.
%To achieve that, let $T=(A_1,\dots,A_n)$, and $T_0=(C_1,\dots,C_n)$. The last node of the last row of $\al$ must be one of these nodes, $A_i$ say.  It is clear that $A_{i+1}$ is not on the adjacent diagonal with that of $A_i$. So we can apply an admissible transposition to swap $A_i$ and $A_{i+1}$, then to swap $A_{i+1}$ and $A_{i+2}$, etc. As a result we will have a new path $(B_1,\dots,B_n)$ obtained from $T$ by a series of admissible transpositions and such that $B_n=C_n$. Now apply induction on $n$.
\end{proof}

%The following is the main result of this chapter. 

\begin{Theorem}\label{TOV}%{\rm \cite{}}%{\bf ()}
We have
$
W(n)=W'(n).
$
Moreover, $\bi^T\approx\bi^{S}$ if and only if 
%$\equivalent$ 
%the tableaux $T$ and $S$ have the same shape $\equivalent$ 
$\bi^T\sim\bi^{S}$. In particular, the branching graph $\mathbb{B}$ is isomorphic to the Young graph $\mathbb{Y}$. 
\end{Theorem}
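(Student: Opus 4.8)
The plan is to prove the two inclusions $W(n)\subseteq W'(n)$ and $W'(n)\subseteq W(n)$ separately, and simultaneously establish that $\approx$ and $\sim$ coincide. The first inclusion is already in hand: Lemma~\ref{L010703_8} records that $W(n)\subseteq W'(n)$, since every weight $\bi\in W(n)$ satisfies properties (i)--(iii) of Lemma~\ref{L010703_4}, which are exactly the conditions cutting out $W'(n)$. So the real content is the reverse inclusion $W'(n)\subseteq W(n)$ together with the refinement of the equivalence relations.

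For the reverse inclusion, I would argue by induction on $n$, the base $n=0,1$ being trivial. The key structural tool is that both $W(n)$ and $W'(n)$ are stable under admissible transpositions: for $W(n)$ this is Corollary~\ref{C010703}(iii), and for $W'(n)$ (i.e. for standard tableaux) this was observed right after the definition of admissible transposition. Given $\bi\in W'(n)$, write $\bi=\bi^T$ for a standard $\la$-tableau $T$. Deleting the last entry $n$ from $T$ gives a standard tableau $T'$ of shape $\la_{T_n}$, so $\bi'=(i_1,\dots,i_{n-1})\in W'(n-1)$; by the inductive hypothesis $\bi'\in W(n-1)$, meaning there is an irreducible $\C S_{n-1}$-module $V'=V(\bi')$. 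The task is to show that appending $i_n=\Res T_n$ yields a genuine weight of some irreducible $\C S_n$-module, i.e. that $\bi\in W(n)$. This is where I would invoke the branching-graph description: by the dictionary between $\mathbb{B}$ and the sets $W(m)/\!\approx$ established at the end of Day One, $\bi\in W(n)$ precisely when there is an edge in $\mathbb{B}$ from $[\bi']$ to some class, realized by extending $\bi'$ by $i_n$. The combinatorial constraints (i)--(iii) guarantee $T_n$ is an addable node for $\la_{T_n}$ with content $i_n$, and one must match this addable-node data to an actual composition factor of a restriction.

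The main obstacle, and the heart of the argument, is connecting the purely combinatorial extension (adding a box of content $i_n$) to the representation-theoretic fact that $i_n$ really occurs as an eigenvalue of $L_n$ on some irreducible extending $V(\bi')$. Here is where Lemma~\ref{L010703_9} does the decisive work: it says that if $\bi\in W'(n)$ and $\bi\sim\bj$ for some $\bj\in W(n)$, then already $\bi\in W(n)$ and $\bi\approx\bj$. So it suffices to produce, for each shape $\la$, at least \emph{one} tableau $\bj$ of shape $\la$ lying in $W(n)$; every other standard tableau of shape $\la$ is related to it by a chain of admissible transpositions (Lemma~\ref{020703}), and then Lemma~\ref{L010703_9} propagates membership in $W(n)$ across the whole $\sim$-class. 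To exhibit one such $\bj$ per shape, I would count: the number of $\sim$-classes in $W'(n)$ equals the number of partitions of $n$ (each class is a shape), while the number of $\approx$-classes in $W(n)$ equals the number of irreducible $\C S_n$-modules, which by the Lemma of Day One equals the number of conjugacy classes of $S_n$, also the number of partitions of $n$. Since $W(n)\subseteq W'(n)$ and $\approx$ refines $\sim$, equality of these counts forces $W(n)=W'(n)$ and forces each $\sim$-class to be a single $\approx$-class; this yields both $\bi^T\approx\bi^S\iff\bi^T\sim\bi^S$ and, as an immediate consequence via the edge description of $\mathbb{B}$, the isomorphism $\mathbb{B}\cong\mathbb{Y}$.
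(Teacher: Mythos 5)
Your proposal is correct and follows essentially the same route as the paper: the inclusion $W(n)\subseteq W'(n)$ from Lemma~\ref{L010703_8}, then the counting identity $|W(n)/\!\approx|=|\mathcal{P}(n)|=|W'(n)/\!\sim|$ combined with Lemma~\ref{L010703_9} (each $\sim$-class meeting $W(n)$ lies entirely inside a single $\approx$-class) to force both $W(n)=W'(n)$ and the coincidence of the two equivalence relations at once. Two cosmetic quibbles: the induction-on-$n$ paragraph is scaffolding you never actually use, and the phrase ``$\approx$ refines $\sim$'' points the wrong way --- what Lemma~\ref{L010703_9} gives, and what the count needs, is that $\sim$ implies $\approx$ on $W(n)$.
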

\begin{proof}
By Lemma~\ref{L010703_8}, $W(n)\subseteq W'(n)$. The number of isomorphism classes of irreducible $\C S_n$-modules equals the number of conjugacy classes of $S_n$, which are labelled by partitions of $n$, see Exercise~\ref{ExConjCl}. So we have 
\begin{equation}\label{E010703_9}
|W(n)/\approx|=|\mathcal{P}(n)|=|W'(n)/\sim|. 
\end{equation}
Now, let $\bi\in W'(n)$. In view of Lemma~\ref{L010703_9},  the $\sim$-equivalence class of $\bi$ either contains no elements of $W(n)$ or is a subset of a $\approx$-equivalence class of $W(n)$. In view of (\ref{E010703_9}), this now implies $W(n)=W'(n)$ and $\sim$ is equivalent to $\approx$. 
\end{proof}

Now to every irreducible $\C S_n$-module $V$ we can associate a partition $\la\in\mathcal{P}(n)$. Indeed, if $\bi\in W(n)$ is a weight of $V$ then $\bi=\bi^T$ for some standard tableaux $T$, and we associate to $V$ the shape $\la$ of $T$, which is well-defined by the theorem. We will write $V=V^\la$. This notation is better than $V(\bi)$, because we have a one-to-one correspondence between the isoclasses of irreducible $\C S_n$-modules and partitions of $n$. The weights of $V^\la$ are precisely $\{\bi^T\mid \text{$T$ is a standard $\la$-tableau}\}$.

\begin{Example}\label{E050307}%{\rm \cite{}}%{\bf ()}
{\rm 
(i) If $\la=(n)$, the only standard $\la$-tableau is\ \ 
$
\begin{picture}(60,10) 
\put(0,0){\line(1,0){35}}
\put(0,10){\line(1,0){35}}
\put(0,0){\line(0,1){10}}
\put(10,0){\line(0,1){10}}
\put(20,0){\line(0,1){10}}
\put(30,0){\line(0,1){10}}
%\put(40,0){\line(0,1){10}}
\put(40,5){\makebox(0,0){$\cdots$}}
\put(45,0){\line(1,0){15}}
\put(45,10){\line(1,0){15}}
\put(50,0){\line(0,1){10}}
\put(60,0){\line(0,1){10}}
\put(5,5){\makebox(0,0){\scriptsize$1$}}
\put(15,5){\makebox(0,0){\scriptsize$2$}}
\put(25,5){\makebox(0,0){\scriptsize$3$}}
\put(55,5){\makebox(0,0){\scriptsize$n$}}
\end{picture}
$\,.
So $V^{(n)}$ is $1$-dimensional, and its only weight is $(0,1,\dots,n-1)$. Similarly, $V^{(1^n)}$ is $1$-dimensional with the only weight $(0,-1,\dots,-n)$. It is clear from this information that $V^{(n)}$ is the trivial and $V^{(1^n)}$ is the sign modules over $S_n$.

(ii) Let $\la=(n-1,1)$. Then the standard $\la$-tableaux are 
$T(k):=
\begin{picture}(52,20) 
\put(0,5){\line(1,0){25}}
\put(0,15){\line(1,0){25}}
\put(0,5){\line(0,1){10}}
\put(10,5){\line(0,1){10}}
\put(20,5){\line(0,1){10}}
%\put(30,5){\line(0,1){10}}
%\put(40,0){\line(0,1){10}}
\put(30,10){\makebox(0,0){$\cdots$}}
\put(35,5){\line(1,0){15}}
\put(35,15){\line(1,0){15}}
\put(40,5){\line(0,1){10}}
\put(50,5){\line(0,1){10}}
\put(5,10){\makebox(0,0){\scriptsize$1$}}
\put(15,10){\makebox(0,0){\scriptsize$2$}}
%\put(25,10){\makebox(0,0){\scriptsize$3$}}
\put(45,10){\makebox(0,0){\scriptsize$n$}}
\put(0,5){\line(0,-1){10}}
\put(0,-5){\line(1,0){10}}
\put(10,-5){\line(0,1){10}}
\put(5,0){\makebox(0,0){\scriptsize$k$}}
\end{picture}
$
for $2\leq k\leq n$, and the corresponding weights are $$\bi^{(k)}:=(0,1,\dots,k-2,-1,k-1,\dots,n-2)\qquad (2\leq k\leq n).$$ 
%Then the weights of $V^{(n-1,1)}$ are precisely $\{\bi(i)\mid 2\leq i\leq n\}$.
}
\end{Example}

Note what we have done so far. We have started from a nested family of algebras $\C S_0\subset \C S_1\subset  \dots$, proved the multiplicity-freeness of the branching rule from scratch, defined the branching graph $\mathbb{B}$, and tried to learn enough facts about $\mathbb{B}$, so that we could identify it with some known graph. This have lead to a classification of irreducible $\C S_n$-modules for all $n$ and a description of the branching rule at the same time. On the way we have obtained other useful results about irreducible modules.  %We are going to follow this scheme again and again in this book for various families of algebras, although to realize it we will need more sofisticated tools. 

\section{Formulas of Young and Murnaghan-Nakayama}\label{S2.2}

Formulas of Young describe explicitly the matrices of simple transpositions $s_k $ with respect to a nice choice of a $GZ$-basis. The formulas come more or less from (\ref{E270603_3}) and (\ref{E020703_5}). We just need to scale the elements of a GZ-basis in a consistent way. 

In order to do this, fix $\la\in\mathcal{P}(n)$. Pick a basis  vector $v_{T(\la)}\in V_{T(\la)}^\la$ corresponding to the canonical $\la$-tableau. Let $T$ be an arbitrary standard $\la$-tableau. Write $T=w\cdot T(\la)$ for $w\in S_n$. Define $\ell(T)$ to be $\ell(w)$. Denote by $\pi_T$ the projection to the one-dimensional subspace $V_T^\la$ along $\oplus_{S\neq T}V_S^\la$, and set 
\begin{equation}\label{E030703}
v_T=\pi_T(wv_{T(\la)}).
\end{equation}
By Lemma~\ref{020703}, there is a reduced decomposition $w=s_{k _1}\dots s_{k _\ell}$ with all simple transpositions being admissible. So Corollary~\ref{C010703}(iii) implies
\begin{equation}\label{E030703_2}
wv_{T(\la)}=v_T+\sum_{S:\,\ell(S)<\ell(T)} c_S v_S,
\end{equation}
and $v_T\neq 0$.

\begin{Theorem}\label{TYSN}%{\rm \cite{}}%
{\bf (Young's Seminormal Form)}
Let $\la\in\mathcal{P}(n)$, $\{v_T\}$ be the GZ-basis of $V^\la$ defined in (\ref{E030703}), and $1\leq k<n$. Then the action of the simple transposition $s_k\in S_n$  is given as follows
\begin{enumerate}
\item[{\rm (i)}] If $\Res T_{k+1}=\Res T_{k}\pm 1$, then $s_kv_T=\pm v_T$. 
\item[{\rm (ii)}] Let $\rho:=(\Res T_{k+1}-\Res T_{k})^{-1}\neq \pm 1$ and set $S=s_kT$. Then
$$
s_kv_T=\begin{cases}
\rho v_T+v_S, 
& \text{if $\ell(S)>\ell(T)$,}\\
-\rho v_T+(1-\rho^2) v_S, &\text{if $\ell(S)<\ell(T)$.}
\end{cases}
$$
\end{enumerate}
\end{Theorem}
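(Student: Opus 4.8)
The plan is to reduce the action of $s_k$ to the rank-two situation of Corollary~\ref{C010703}, and then to fix the resulting scalars using the normalization~(\ref{E030703}) of the GZ-basis. I would first observe that $v_T$ spans the one-dimensional weight space of weight $\bi^T$, so any statement about the abstract weight vector $v_{\bi^T}$ transfers verbatim to $v_T$. Part (i) then needs no normalization at all: when $\Res T_{k+1}=\Res T_k\pm1$, Corollary~\ref{C010703}(ii) gives $s_kv_{\bi^T}=\pm v_{\bi^T}$ with the matching sign, and being a scale-invariant eigenvector equation it passes directly to $v_T$.

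For part (ii) I would set $\rho=(\Res T_{k+1}-\Res T_k)^{-1}$ and $S=s_kT$; since $\Res T_{k+1}\neq\Res T_k\pm1$, the transposition $s_k$ is admissible and $S$ is again a standard $\la$-tableau. The first step records the self-coefficient. Writing $w:=(s_k-\rho)v_T$, so that $s_kv_T=\rho v_T+w$ by construction, Corollary~\ref{C010703}(iii) (applied with $a=\Res T_k$ and $b=\Res T_{k+1}$) shows that $w$ is a nonzero vector of weight $\bi^S$; as that weight space is one-dimensional, $w=\nu v_S$ for some scalar $\nu$, giving $s_kv_T=\rho v_T+\nu v_S$. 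The point to stress is that the coefficient of $v_T$ is forced to equal $\rho$ irrespective of how $v_S$ is scaled, so the entire problem is to compute $\nu$.

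The crux is the subcase $\ell(S)>\ell(T)$, where I would prove $\nu=1$. Letting $w_T,w_S$ be the unique elements with $T=w_T\,T(\la)$ and $S=w_S\,T(\la)$, uniqueness forces $w_S=s_kw_T$, hence $w_Sv_{T(\la)}=s_k(w_Tv_{T(\la)})$. Expanding $w_Tv_{T(\la)}$ by~(\ref{E030703_2}) and applying $s_k$ produces $w_Sv_{T(\la)}=s_kv_T+\sum_{U:\,\ell(U)<\ell(T)}c_U\,s_kv_U$. By the Lemma following~(\ref{E280603}), together with the fact that all weights of $V^\la$ share the same multiset of contents, each $s_kv_U$ lies in $\spa(v_U,v_{s_kU})$; thus $v_S$ can occur only when $U=S$ or $s_kU=S$, i.e. when $U=S$ or $U=T$, and both are excluded since $\ell(U)<\ell(T)<\ell(S)$. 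Applying $\pi_S$ and invoking the definition $v_S=\pi_S(w_Sv_{T(\la)})$ then leaves $v_S=\nu v_S$, so $\nu=1$ and $s_kv_T=\rho v_T+v_S$.

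For $\ell(S)<\ell(T)$ I would feed the previous case back in with the two tableaux interchanged: since $s_kS=T$ with $\ell(T)>\ell(S)$ and $(\Res S_{k+1}-\Res S_k)^{-1}=-\rho$, it yields $s_kv_S=-\rho v_S+v_T$. Substituting this into $s_k^2v_T=v_T$, with $s_kv_T=\rho v_T+\nu v_S$ from the self-coefficient step, forces $\nu=1-\rho^2$ and hence $s_kv_T=\rho v_T+(1-\rho^2)v_S$. The one genuine obstacle I anticipate is the normalization-matching in the $\ell(S)>\ell(T)$ subcase: it is precisely the length inequality $\ell(U)<\ell(T)<\ell(S)$, excluding $U=S$, that pins $\nu$ to $1$ rather than to an uncontrolled scalar, while everything afterwards is forced by $s_k^2=1$. (I note that this computation produces $+\rho\,v_T$ in the last case, so the $-\rho\,v_T$ in the displayed statement looks like a sign misprint.)
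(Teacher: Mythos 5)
Your proposal is correct and follows essentially the same route as the paper's own (very terse) proof: part (i) from Corollary~\ref{C010703}(ii), and for part (ii) the observation that Corollary~\ref{C010703}(iii) together with one-dimensionality of weight spaces forces $s_kv_T-\rho v_T$ to be proportional to $v_S$, with the scalar pinned to $1$ by (\ref{E030703_2}) — your $\pi_S$-projection argument just fills in the details the paper leaves implicit, and the remaining case is indeed forced by $s_k^2=1$. Your parenthetical remark is also correct: consistency with $s_k^2=1$ (and with Theorem~\ref{TYOF}, whose self-coefficient is $+\rho$ in both cases) shows the second line of the displayed formula should read $\rho v_T+(1-\rho^2)v_S$, so the ``$-\rho v_T$'' in the statement is a sign misprint.
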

\begin{proof}
If $\Res T_{k+1}=\Res T_{k}\pm 1$, the result follows from Corollary~\ref{C010703}(ii). Otherwise $s_k$ is an admissible transposition for $T$. We may assume that $\ell(S)>\ell(T)$. As weight  spaces of $V^\la$ are $1$-dimensional, Corollary~\ref{C010703}(iii) implies that $v_S$ equals $s_kv_T-\rho v_T$ up to a scalar multiple, and, using (\ref{E030703_2}), we see that the scalar is $1$. 
%Now the result follows from  Corollary~\ref{C010703}(iii). 
\end{proof}

\begin{Corollary}%\label{}%{\rm \cite{}}%{\bf ()}
Irreducible representations of $S_n$ are defined over $\mathbb{Q}$ and are self-dual.
\end{Corollary}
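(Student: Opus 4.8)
The plan is to read both claims—rationality over $\Q$ and self-duality—directly off of Young's Seminormal Form, which we have just established in Theorem~\ref{TYSN}. The key observation is that the symmetric group $S_n$ is generated by the simple transpositions $s_1,\dots,s_{n-1}$, so a representation is completely determined by the matrices $\rho(s_k)$ in the GZ-basis $\{v_T\}$ of $V^\la$. Therefore it suffices to analyze these matrices.

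First I would address rationality. Inspecting the formulas in Theorem~\ref{TYSN}, the action of each $s_k$ on each basis vector $v_T$ is given entirely in terms of the scalar $\rho=(\Res T_{k+1}-\Res T_k)^{-1}$ and the quantity $1-\rho^2$. Since the contents $\Res T_k$ are integers, the difference $\Res T_{k+1}-\Res T_k$ is a nonzero integer (nonzero by Corollary~\ref{C010703}(i)), so $\rho\in\Q$ and $1-\rho^2\in\Q$. Hence every entry of every matrix $\rho(s_k)$ lies in $\Q$. As these matrices generate the whole representation, all matrix entries of $\rho(w)$ for $w\in S_n$ are rational, which is exactly the statement that the representation is defined over $\Q$.

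Next I would treat self-duality. The dual module $(V^\la)^*$ has the same character as $V^\la$ if and only if $\chi_{V^\la}(g)=\chi_{V^\la}(g^{-1})$ for all $g$; but we already know each $\rho(s_k)$ has rational, hence real, entries, and for real representations one checks $\chi(g)=\overline{\chi(g)}=\chi(g^{-1})$. In fact the cleanest route is to use the explicit orthogonal form of the seminormal matrices recorded in~(\ref{E020703_5}): after the rescaling of~(\ref{E030703_5}), each matrix of $s_k$ becomes a real orthogonal (indeed symmetric) matrix. A real orthogonal representation preserves the standard symmetric bilinear form on the underlying space, and this nondegenerate $S_n$-invariant pairing furnishes an explicit isomorphism $V^\la\cong(V^\la)^*$. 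Thus every irreducible is self-dual.

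The main obstacle is a bookkeeping point rather than a conceptual one: Theorem~\ref{TYSN} is stated for the particular GZ-basis normalized by~(\ref{E030703}), whereas the orthogonality in~(\ref{E020703_5}) requires the slightly different rescaling~(\ref{E030703_5}) involving the factor $(1-(b-a)^{-2})^{-1/2}$. I would therefore be careful to note that self-duality is a basis-independent statement—it only asserts the existence of an invariant nondegenerate pairing—so I am free to pass to whichever basis makes the $s_k$ orthogonal, compute the invariant form there, and transport it back. Alternatively, one can sidestep bases altogether and argue abstractly: rationality gives $\chi_{V^\la}=\overline{\chi_{V^\la}}$, and since $\chi(g^{-1})=\overline{\chi(g)}$ always holds (as noted in the character-theory section), we get $\chi_{V^\la}(g)=\chi_{V^\la}(g^{-1})=\chi_{(V^\la)^*}(g)$, whence $V^\la\cong(V^\la)^*$ by the Corollary to the character basis theorem. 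This character-theoretic argument is the safest, as it avoids any explicit tracking of the two normalizations.
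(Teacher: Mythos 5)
Your proof is correct and is exactly the argument the paper intends: the corollary is stated without proof immediately after Theorem~\ref{TYSN}, and reading rationality off the seminormal matrices of the generators $s_k$ (with $\rho\in\Q$, $1-\rho^2\in\Q$) and deducing self-duality either from the real orthogonal form or from the resulting rationality of the character is the standard route. Your closing character-theoretic argument for self-duality is indeed the cleanest way to avoid juggling the two normalizations, and nothing is missing.
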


\begin{Theorem}\label{TYOF}%{\rm \cite{}}%
{\bf (Young's Orthogonal Form)}
Let $\la\in\mathcal{P}(n)$. There exists a GZ-basis $\{w_T\}$ of $V^\la$ such that the action of an arbitrary simple transposition $s_k\in S_n$  is given by
\begin{equation}\label{E130613}
s_kw_T=\rho w_T+\sqrt{1-\rho^2}w_{s_kT},
\end{equation}
where $\rho:=(\Res T_{k+1}-\Res T_{k})^{-1}$ (note that when $\rho=\pm1$, the coefficient of $w_{s_kT}$ is zero, so this term should be omitted).
\end{Theorem}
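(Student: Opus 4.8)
The plan is to produce $\{w_T\}$ by renormalizing the seminormal basis $\{v_T\}$ of Theorem~\ref{TYSN} to unit length with respect to an invariant inner product, and then to deduce the precise matrix of $s_k$ from the mere fact that it must act by an orthogonal involution. First I would fix $\la\in\mathcal{P}(n)$ and, using Exercise~\ref{ExScPr} together with the rationality from the Corollary above, choose a real-valued $S_n$-invariant inner product $(\cdot,\cdot)$ on $V^\la$. Each transposition $(m,k)\in S_n$ then acts as an orthogonal involution and is therefore self-adjoint, so every $L_k=\sum_{m<k}(m,k)$ is self-adjoint and eigenvectors of $L_1,\dots,L_n$ for distinct weights are automatically orthogonal. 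Hence $\{v_T\}$ is an orthogonal basis, and I would define $w_T:=v_T/\sqrt{(v_T,v_T)}$, an orthonormal GZ-basis.

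Next I would fix $1\leq k<n$ and a standard $\la$-tableau $T$, and put $\rho=(\Res T_{k+1}-\Res T_k)^{-1}$. If $\Res T_{k+1}=\Res T_k\pm1$, then $\rho=\pm1$, $\sqrt{1-\rho^2}=0$, and Theorem~\ref{TYSN}(i) gives $s_kv_T=\pm v_T=\rho v_T$, so (\ref{E130613}) holds with the second term omitted. Otherwise Corollary~\ref{C010703}(i),(iii) shows that the two contents are distinct and differ by an integer of absolute value at least $2$, so $\rho$ is real with $|\rho|<1$; moreover $S:=s_kT$ is standard, the subspace $X:=\spa(v_T,v_S)$ is $s_k$-invariant, and $L_k,L_{k+1},s_k$ act on $X$ by the matrices (\ref{E270603_3}). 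The salient consequence is that the coefficient of $v_T$ in $s_kv_T$ equals the top-left entry $\rho$, a number unaffected by any rescaling of the two weight vectors.

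The crux is to identify the off-diagonal entry, and here I would exploit orthogonality rather than compute. In the orthonormal basis $\{w_T,w_S\}$ the matrix $M$ of $s_k$ on $X$ is orthogonal, because $s_k$ is an isometry, and satisfies $M^2=I$, because $s_k^2=1$; an orthogonal involution equals its own transpose, so $M$ is symmetric. Writing $M=\left(\begin{smallmatrix}\rho & c\\ c & e\end{smallmatrix}\right)$, the relations $\rho^2+c^2=1$ and $c(\rho+e)=0$ with $c\neq0$ force $e=-\rho$ and $c=\pm\sqrt{1-\rho^2}$. To pin the sign I would take the pair ordered so that $\ell(S)>\ell(T)$; then Theorem~\ref{TYSN}(ii) gives $s_kv_T=\rho v_T+v_S$ with positive coefficient, whence $(s_kw_T,w_S)=(v_S,v_S)/(\|v_T\|\,\|v_S\|)=\|v_S\|/\|v_T\|>0$, so $c=+\sqrt{1-\rho^2}$; by symmetry of $M$ the reversed entry $(s_kw_S,w_T)$ is the same positive number, and reading off the two rows of $M$ recovers (\ref{E130613}) for both $T$ and $S$ (noting $(\Res S_{k+1}-\Res S_k)^{-1}=-\rho$).

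I expect the genuine obstacle to be exactly this sign-and-normalization bookkeeping: one must be sure that a single fixed orthonormal basis $\{w_T\}$ makes every $s_k$ simultaneously orthogonal with the positive root in each block. This is what the argument guarantees, since $\{w_T\}$ is one basis obtained once and for all by normalizing the globally defined seminormal basis, the diagonal entries are pinned by (\ref{E270603_3}), and the symmetry of orthogonal involutions reduces the positivity check to a single length-increasing transition, where the seminormal coefficient is manifestly positive. As a byproduct the argument yields the isometry identity $\|v_S\|^2=(1-\rho^2)\|v_T\|^2$, which one obtains directly by expanding $(v_S,v_S)$ via $v_S=s_kv_T-\rho v_T$ and $(s_kv_T,s_kv_T)=(v_T,v_T)$, and which re-confirms that the common off-diagonal value is indeed $\sqrt{1-\rho^2}$.
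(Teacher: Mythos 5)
Your proposal is correct and follows essentially the same route as the paper: both normalize the seminormal basis $\{v_T\}$ of Theorem~\ref{TYSN} with respect to an $S_n$-invariant inner product and then exploit that $s_k$ acts as an isometry on the two-dimensional span of $v_T$ and $v_{s_kT}$, the key identity $(v_S,v_S)=(1-\rho^2)(v_T,v_T)$ appearing in both arguments. The only cosmetic difference is that the paper finishes by citing the explicit rescaling (\ref{E030703_5})--(\ref{E020703_5}) from the $H_2$-module analysis, whereas you repackage that step as the observation that a unitary involution with prescribed diagonal entry $\rho$ and positive off-diagonal entry must equal the matrix (\ref{E020703_5}).
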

\begin{proof}
Let $\{v_T\}$ be the basis of Theorem~\ref{TYSN}, and set $$w_T=v_T/\sqrt{(v_T,v_T)}.$$  Let $S=s_kT$. We may assume that $s_k$ is an admissible transposition. Moreover, note that the formula (\ref{E130613}) for $s_kw_T$ implies the corresponding formula for $s_kw_S$ and conversely, so  we may assume that $\ell(S)>\ell(T)$.

As $s_k$ preserves $(\cdot,\cdot)$, the formulas of Theorem~\ref{TYSN}(ii)
imply 
\begin{align*}
(v_{S},v_{S})&=(s_i v_T-\rho v_T,s_i v_T-\rho v_T)
\\
&=(v_T,v_T)+\rho^2(v_T,v_T)-\rho(s_iv_T,v_T)-\rho(v_T,s_iv_T)
\\
&=(v_T,v_T)+\rho^2(v_T,v_T)-\rho(\rho v_T+v_S,v_T)-\rho(v_T,\rho v_T+v_S)
\\
&=(1-\rho^2)(v_T,v_T).
\end{align*}
Hence 
$$w_S=v_S/(v_S,v_S)=v_S/(\sqrt{(v_T,v_T)}\sqrt{1-\rho^2}).
$$ 
Now, the result follows from (\ref{E030703_5}) and (\ref{E020703_5}). 
\end{proof}

\begin{Example}%\label{}%{\rm \cite{}}%{\bf ()}
{\rm 
Let $\la=(n-1,n)$. Using the notation of Example~\ref{E050307}(ii) and writing $v_j$ for $v_{T(j)}$, $2\leq j\leq n$, the formulas of Young's orthogonal form become:
\begin{equation}\label{E050703_4}
s_iv_{j}= \begin{cases}
v_j, & \text{if $j\neq i,i+1$,}\\
\frac{1}{i}v_i+\sqrt{1-\frac{1}{i^2}}v_{i+1}, & \text{if $j= i$,}\\
\sqrt{1-\frac{1}{i^2}}v_i-\frac{1}{i}v_{i+1}, & \text{if $j= i+1$.}
\end{cases}
\end{equation}
Let $M$ be the natural permutation $\C S_n$-module with basis $e_1,\dots,e_n$. It has the irreducible submodule $N=\{\sum_ia_ie_i\in M\mid \sum_i a_i=0\}$. Set $$v_j:=\frac{1}{\sqrt{j(j-1)}}(e_1+\dots+e_{j-1}-(j-1)e_j)\qquad(2\leq j\leq n).$$ Then $\{v_2,v_3,\dots,v_n\}$ is a basis of $N$ with respect to which the simple permutations act by formulas (\ref{E050703_4}). 
}
\end{Example}

Let $\la\in \mathcal{P}(n)$ and $\mu\in\mathcal{P}(n-k)$. Set
$$
V^{\la/\mu}:=\Hom_{S_{n-k}}(V^\mu,\res_{S_{n-k}} V^\la).
$$
It is clear from the branching rule that $V^{\la/\mu}\neq 0$ if and only if the Young diagram $\mu$ is contained in the Young diagram $\la$, in which case we denote  the complement by $\la/\mu$. A set of nodes of this form will be called a {\em skew shape}. The number of nodes in $\la/\mu$ will be denoted $|\la/\mu|$. The number of rows occupied by $\la/\mu$ minus $1$ will be denoted by $L(\la/\mu)$. 
A skew shape is called a {\em skew hook} if it is connected and does not have two boxes on the same diagonal (equivalently, if the contents of the nodes of the shape form a segment of integers). 

By Lemma~\ref{L270603}, we know that $V^{\la/\mu}$ is an irreducible $Z_{n-k,k}$-module. On restriction to $S_k'\subset Z_{n-k,k}$ it becomes a (not necessarily irreducible) $\C S_k$-module. Let $\chi^{\la/\mu}$ be the character of this $\C S_k$-module. If $\mu=\emptyset$ we get the character $\chi^\la$ of $V^\la$. 
The results on GZ-bases and Young's canonical forms can be easily generalized to skew shapes. For example, define a {\em $\la/\mu$-path} to be any path which connects $\mu$ with $\la$. We will not distinguish between $\la/\mu$-paths and standard $\la/\mu$-tableaux (defined in the obvious way). 
Then Theorem~\ref{TYOF} implies

\begin{Proposition}\label{PYOFSS}%{\rm \cite{}}%
{\bf (Young's Orthogonal Form for Skew Shapes)}
Let $\la/\mu$ be a skew shape with $|\la/\mu|=k$, $|\mu|=n-k$. There exists a basis 
$$\{w_T\mid \text{$T$ is a standard $\la/\mu$-tableau}\}
$$ 
of $V^{\la/\mu}$ such that the action of an arbitrary simple transposition $s_r\in S_k$ is given by
$$
s_rw_T=\rho w_T+\sqrt{1-\rho^2}w_{s_rT}
$$
where $\rho:=(\Res T_{r+1}-\Res T_{r})^{-1}$ (note that when $\rho=\pm1$, the coefficient of $w_{s_rT}$ is zero, so this term should be omitted). Moreover, each vector $w_T$ is a simultaneous eigenvector for $L_{n-k+1},\dots,L_n\in Z_{n-k,k}$ with eigenvalues $\Res T_1,\dots,\Res T_k$, respectively. 
\end{Proposition}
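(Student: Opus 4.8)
The plan is to realize $V^{\la/\mu}$ as a weight subspace of $V^\la$, whose Young orthogonal basis we already control by Theorem~\ref{TYOF}, and to transport that form across the identification. Fix a standard $\mu$-tableau $U$ and let $v_U\in V^\mu$ be the corresponding Young orthogonal basis vector, so that $L_mv_U=\Res U_m\,v_U$ for $1\leq m\leq n-k$. I would work with the evaluation map
$$
\Phi\colon V^{\la/\mu}=\Hom_{S_{n-k}}(V^\mu,\res_{S_{n-k}}V^\la)\to V^\la,\qquad \Phi(f)=f(v_U).
$$
Since $v_U$ generates the irreducible $\C S_{n-k}$-module $V^\mu$, the map $\Phi$ is injective; and since every $f$ commutes with $S_{n-k}$, hence with $L_1,\dots,L_{n-k}\in\C S_{n-k}$, the image of $\Phi$ lies in the simultaneous eigenspace $E$ of $L_1,\dots,L_{n-k}$ on $V^\la$ for the eigenvalues $\Res U_1,\dots,\Res U_{n-k}$.

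The first real step is to identify $E$ combinatorially. The eigenspace $E$ is spanned by those $w_S$ (for $S$ a standard $\la$-tableau) with $\Res S_m=\Res U_m$ for $1\leq m\leq n-k$; by the fact that a weight determines a standard tableau (Theorem~\ref{TOV}, applied inside $V^\mu$), this is exactly the condition $S|_{\{1,\dots,n-k\}}=U$. Counting then gives $\dim V^{\la/\mu}$ equal to the multiplicity of $V^\mu$ in $\res_{S_{n-k}}V^\la$, i.e.\ the number of standard $\la$-tableaux extending $U$, which is $\dim E$. Combined with injectivity, this shows $\Phi\colon V^{\la/\mu}\iso E$. Moreover $\Phi$ is $Z_{n-k,k}$-equivariant: for $c\in Z_{n-k,k}$ one has $\Phi(cf)=(cf)(v_U)=c\,f(v_U)=c\,\Phi(f)$, and $E$ is $Z_{n-k,k}$-stable because $Z_{n-k,k}$ centralizes $L_1,\dots,L_{n-k}$.

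With this in hand the conclusion is a direct transcription. The standard $\la/\mu$-tableaux $T$ correspond bijectively to the standard $\la$-tableaux $S$ extending $U$, via $T_r=S_{n-k+r}$, so I would define $w_T:=\Phi^{-1}(w_S)$ for the matching $S$. Under the isomorphism $S_k\cong S_k'\subseteq Z_{n-k,k}$ the simple transposition $s_r$ becomes $s_{n-k+r}=(n-k+r,\,n-k+r+1)\in S_n$, and since $\Res S_{n-k+r}=\Res T_r$ and $\Res S_{n-k+r+1}=\Res T_{r+1}$, applying Theorem~\ref{TYOF} to $w_S$ in $V^\la$ and pulling back through the $Z_{n-k,k}$-equivariant $\Phi$ yields
$$
s_rw_T=\rho\,w_T+\sqrt{1-\rho^2}\,w_{s_rT},\qquad \rho=(\Res T_{r+1}-\Res T_r)^{-1},
$$
with the term omitted exactly when $\rho=\pm1$ (the case where $s_{n-k+r}S$ is not standard). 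The same pullback, applied to $L_{n-k+r}w_S=\Res S_{n-k+r}\,w_S$, gives $L_{n-k+r}w_T=\Res T_r\,w_T$, which is the asserted eigenvector statement. The main obstacle is the middle step: verifying that $\Phi$ is an isomorphism onto the correct $L_1,\dots,L_{n-k}$-eigenspace and that it intertwines the $Z_{n-k,k}$-actions; once that is established, the formulas follow mechanically from the already-proved orthogonal form on $V^\la$.
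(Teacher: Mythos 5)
Your proof is correct and follows exactly the route the paper intends: the paper simply asserts that Proposition~\ref{PYOFSS} ``follows from Theorem~\ref{TYOF}'' after remarking that the GZ-basis results generalize easily to skew shapes, and your evaluation map $\Phi(f)=f(v_U)$ onto the simultaneous $L_1,\dots,L_{n-k}$-eigenspace of $V^\la$ is precisely the identification that makes this reduction work. The dimension count, the $Z_{n-k,k}$-equivariance check, and the transport of the formulas are all sound, so this is a complete filling-in of the details the paper omits.
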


\begin{Lemma}\label{L040703}%{\rm \cite{}}%{\bf ()}
Let $\la/\mu$ be a skew shape with $|\la/\mu|=k$,  and $T$ be a standard $\la/\mu$-tableau. Then $\C S_k\cdot w_T=V^{\la/\mu}$.
\end{Lemma}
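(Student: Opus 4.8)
The plan is to prove that the cyclic module $\C S_k\cdot w_T$ already contains the entire Young basis $\{w_S\mid S\text{ a standard }\la/\mu\text{-tableau}\}$ supplied by Proposition~\ref{PYOFSS}, which forces $\C S_k\cdot w_T=V^{\la/\mu}$. The essential algebraic input is that admissible transpositions let one pass from a basis vector to a neighbouring one inside a single cyclic submodule. Concretely, if $s_r$ is admissible for a standard tableau $U$, meaning $\Res U_{r+1}\neq\Res U_r\pm1$, then $\rho:=(\Res U_{r+1}-\Res U_r)^{-1}$ satisfies $|\rho|\leq\tfrac12$, so $\sqrt{1-\rho^2}\neq 0$; Proposition~\ref{PYOFSS} gives $s_rw_U=\rho w_U+\sqrt{1-\rho^2}\,w_{s_rU}$, whence
$$
w_{s_rU}=\tfrac{1}{\sqrt{1-\rho^2}}\,(s_r-\rho)\,w_U\in\C S_k\cdot w_U.
$$
Thus whenever $w_U\in\C S_k\cdot w_T$ and $s_r$ is admissible for $U$, the vector $w_{s_rU}$ lies in $\C S_k\cdot w_T$ as well.

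First I would iterate this observation: applying it along any chain of admissible transpositions starting at $T$ shows that $\C S_k\cdot w_T$ contains $w_S$ for every standard $\la/\mu$-tableau $S$ reachable from $T$ by such a chain. The whole statement therefore reduces to the purely combinatorial claim that the graph on standard $\la/\mu$-tableaux, with edges given by admissible transpositions, is connected.

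Second, I would establish this connectivity as the skew-shape analogue of Lemma~\ref{020703}. Fix a removable corner $A$ of the skew shape (a removable node of $\la$ not contained in $\mu$), which carries the largest entry in the row-by-row canonical standard $\la/\mu$-tableau. In an arbitrary standard tableau $A$ holds some entry $j$; the boxes with entries $\le j$ together with $\mu$ form a diagram $\nu$ with $\mu\subseteq\nu\subseteq\la$ and $A$ a corner of $\nu$, and one checks that no addable box of $\nu$ — in particular the box holding $j+1$ — can lie on a diagonal adjacent to $A$. Hence the transpositions sliding $j,j+1,\dots$ up into $A$ are all admissible, and after placing the maximal entry in $A$ one removes $A$ and inducts on $|\la/\mu|$. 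This exhibits a canonical tableau reachable from every standard tableau, giving connectivity; combined with the previous step, $w_S\in\C S_k\cdot w_T$ for all $S$, so $\C S_k\cdot w_T=V^{\la/\mu}$.

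I expect the combinatorial step to be the only real obstacle, since the module theory collapses to the single displayed identity. The delicate point is verifying that the corner-moving transpositions remain admissible throughout the induction — i.e. that $A$'s entry and its successor never land on adjacent diagonals — and keeping track of which nodes stay removable for the truncated skew shape; this is exactly the bookkeeping the paper alludes to when it calls the skew generalizations of the GZ-basis results easy.
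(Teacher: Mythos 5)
Your proof is correct, but it follows a genuinely different route from the paper's. The paper's argument is structural and very short: $V^{\la/\mu}$ is irreducible over the centralizer $Z_{n-k,k}$ by Lemma~\ref{L270603}, so $Z_{n-k,k}\cdot w_T=V^{\la/\mu}$; then Olshanskii's Lemma together with the relations (\ref{E280603}) lets one write every element of $Z_{n-k,k}$ (up to linear combinations) in the normal form $gxz$ with $g\in\C S_k$, $x\in\lan L_{n-k+1},\dots,L_n\ran$, $z\in Z_{n-k}$, and since $x$ and $z$ act on $w_T$ by scalars (by Proposition~\ref{PYOFSS} and Schur's Lemma respectively), $Z_{n-k,k}\cdot w_T$ collapses to $\C S_k\cdot w_T$. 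You instead work entirely inside $\C S_k$, using the explicit orthogonal form to invert an admissible transposition via $w_{s_rU}=\tfrac{1}{\sqrt{1-\rho^2}}(s_r-\rho)w_U$, and then reduce everything to connectivity of the admissible-transposition graph on standard $\la/\mu$-tableaux, i.e.\ to the skew analogue of Lemma~\ref{020703}. That combinatorial step is the real content of your argument and it does go through: with $A$ the removable node of $\la$ in the bottom-most nonempty row of $\la/\mu$, one checks that for any intermediate partition $\nu$ with $\mu\subseteq\nu\subseteq\la$ having $A$ as a removable node, the (unique) addable nodes of $\nu$ of content $\Res A\pm1$ are $(r,\la_r+1)$ and $(r+1,\la_r)$, neither of which lies in $\la$, so the box holding the successor entry is never on an adjacent diagonal and the slide is admissible. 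What each approach buys: the paper's proof is two lines and reuses the machinery already built (centralizers, Olshanskii), while yours is self-contained modulo Proposition~\ref{PYOFSS}, proves strictly more (all basis vectors $w_S$ lie in $\C S_k\cdot w_T$ with explicit coefficients), and supplies the skew generalization of Lemma~\ref{020703} that the paper only asserts implicitly. The one point to be careful about in a write-up is the choice of $A$: not every removable node of $\la$ outside $\mu$ works for the induction, only the one carrying the maximal entry of the canonical row-reading tableau, as you in fact specify.
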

\begin{proof}
As $V^{\la/\mu}$ is irreducible over $ Z_{n-k,k}$, we have $ Z_{n-k,k}\cdot w_T=V^{\la/\mu}$. On the other hand, in view of Olshanskii's Lemma and (\ref{E280603}), every element of $ Z_{n-k,k}$ can be written as $gxz$, where $g\in \C S_k$, $x\in\lan L_{n-k+1},\dots,L_n\ran$, $z\in  Z_{n-k}$. As $x$ and $z$ act on $w_T$ by multiplication with scalars, %(see Proposition~\ref{PYOFSS}), 
the result follows. 
\end{proof}

\begin{Lemma} \label{LDisj}%{\rm \cite{}}%{\bf ()}
Let $\la/\mu=\ga\cup\de$ where $\ga$ and $\de$ are skew shapes disconnected from each other. Let $c:=|\ga|$ and $d:=|\de|$. Then, as $S_{c+d}$-modules,  
$$V^{\la/\mu}\cong \ind^{S_{c+d}}_{S_c\times S_d}(V^\ga\boxtimes V^\de).$$ 
\end{Lemma}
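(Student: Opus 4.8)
The plan is to write down an explicit $S_{c+d}$-module isomorphism using the orthogonal-form bases from Proposition~\ref{PYOFSS}, and then to confirm it is an isomorphism by a dimension count together with cyclicity. Throughout I identify $S_c\times S_d$ with its standard embedding in $S_{c+d}=S_k$, so that $S_c=\lan s_1,\dots,s_{c-1}\ran$ acts on $\{1,\dots,c\}$ and $S_d=\lan s_{c+1},\dots,s_{c+d-1}\ran$ acts on $\{c+1,\dots,c+d\}$.

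First I would set up the combinatorial dictionary. Since $\ga$ and $\de$ are disconnected, a standard $\la/\mu$-tableau $T$ is the same datum as a choice of which of the labels $1,\dots,c+d$ occupy $\ga$ (a $c$-element subset, equivalently a minimal coset representative of $S_c\times S_d$ in $S_{c+d}$) together with the induced standard fillings of $\ga$ and $\de$. Hence the number of standard $\la/\mu$-tableaux is $[S_{c+d}:S_c\times S_d]\cdot\dim V^\ga\cdot\dim V^\de$, which by Lemma~\ref{LIndProp}(i) equals $\dim\ind^{S_{c+d}}_{S_c\times S_d}(V^\ga\boxtimes V^\de)$. For a standard $\ga$-tableau $T_\ga$ (entries $1,\dots,c$) and a standard $\de$-tableau $T_\de$ (entries $1,\dots,d$), let $T_\ga\sqcup T_\de$ denote the $\la/\mu$-tableau keeping the shape of $T_\ga$ on $\ga$ with entries $1,\dots,c$ and that of $T_\de$ on $\de$ with entries shifted to $c+1,\dots,c+d$; this is standard precisely because the two pieces are disconnected.

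The heart of the argument is the claim that the linear map
$$\tilde\Phi:V^\ga\boxtimes V^\de\to \res_{S_c\times S_d}V^{\la/\mu},\qquad w_{T_\ga}\otimes w_{T_\de}\mapsto w_{T_\ga\sqcup T_\de}$$
defined on orthogonal-form basis vectors is a homomorphism of $S_c\times S_d$-modules. To check this it suffices to compare the action of the internal generators $s_r$. If $r<c$, then $r,r+1$ occupy nodes of $\ga$ in $T_\ga\sqcup T_\de$ with the same contents as in $T_\ga$; and because $\ga,\de$ are disconnected, $s_r(T_\ga\sqcup T_\de)=(s_rT_\ga)\sqcup T_\de$, which is standard iff $s_rT_\ga$ is. The coefficient $\rho=(\Res T_{r+1}-\Res T_r)^{-1}$ in Proposition~\ref{PYOFSS} depends only on these contents, so the formula for $s_rw_{T_\ga\sqcup T_\de}$ matches that for $s_rw_{T_\ga}$ in $V^\ga$ term by term. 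The case $c<r<c+d$ is identical, using that the entry $c+j$ of $T_\ga\sqcup T_\de$ sits in the same node (hence has the same content) as the entry $j$ of $T_\de$, so $s_{c+j}$ reproduces the action of $s_j$ on $V^\de$. This is exactly where disconnectedness enters: it guarantees that no internal transposition ever mixes the two blocks, and that the contents governing these transpositions are unchanged when $\ga$ and $\de$ are viewed inside $\la/\mu$.

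Finally I would pass to the induced module. By the adjunction between $\ind^{S_{c+d}}_{S_c\times S_d}$ and $\res_{S_c\times S_d}$ recalled in the text, $\tilde\Phi$ corresponds to an $S_{c+d}$-homomorphism
$$\Phi:\ind^{S_{c+d}}_{S_c\times S_d}(V^\ga\boxtimes V^\de)\to V^{\la/\mu},\qquad g\otimes(w_{T_\ga}\otimes w_{T_\de})\mapsto g\cdot w_{T_\ga\sqcup T_\de}.$$
The image of $\Phi$ contains $w_{T_0}$, where $T_0$ fills $\ga$ by $1,\dots,c$ and $\de$ by $c+1,\dots,c+d$, and by Lemma~\ref{L040703} this single vector generates $V^{\la/\mu}$ as a $\C S_{c+d}$-module; hence $\Phi$ is surjective. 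Since the two modules have equal finite dimension by the count above, $\Phi$ is an isomorphism. I expect the only genuine subtlety to be the equivariance of $\tilde\Phi$, i.e.\ making the content-matching under disconnectedness precise; the remainder is a dimension count plus cyclicity.
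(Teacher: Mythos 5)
Your proposal is correct and follows essentially the same route as the paper's own proof: both identify a copy of $V^\ga\boxtimes V^\de$ inside $\res_{S_c\times S_d}V^{\la/\mu}$ spanned by the orthogonal-form vectors $w_T$ with the first $c$ entries in $\ga$, obtain a surjection from the induced module via Frobenius reciprocity together with the cyclicity Lemma~\ref{L040703}, and conclude by the dimension count $\binom{k}{c}\dim V^\ga\dim V^\de$. The only difference is that you spell out the $S_c\times S_d$-equivariance of the identification generator by generator, which the paper leaves implicit in its appeal to Proposition~\ref{PYOFSS}.
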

\begin{proof}
There exists a standard $\la/\mu$-tableau $T$ such that $T_1,\dots,T_c\in \ga$ and $T_{c+1},\dots,T_k\in\de$. By Proposition~\ref{PYOFSS}, the subspace of $V^{\la/\mu}$, spanned by vectors $w_T$ for all such tableaux $T$, is invariant with respect to $S_c\times S_d<S_k$, and, as a $\C[S_c\times S_d]$-module, it is isomorphic to $V^\ga\boxtimes V^\de$. By Lemma~\ref{L040703} and Frobenius reciprocity, we get a surjective homomorphism 
$$\ind^{S_{c+d}}_{S_c\times S_d}(V^\ga\boxtimes V^\de)\to V^{\la/\mu}.$$ 
But, using Proposition~\ref{PYOFSS}, we see that the dimensions of both modules are equal to ${k\choose c}\dim V^\ga\dim V^\de$. So $V^{\la/\mu}\cong \ind^{S_{c+d}}_{S_c\times S_d}(V^\ga\boxtimes V^\de)$. 
\end{proof}
%The proposition implies that $V^{\la/\be}\cong V^{\la'/\be'}$ if $\la/\be$ and $\la'/\be'$ `have the same shape'.

The final main result of this section is 

\begin{Theorem}\label{TMNG}%{\rm \cite{}}%{\bf ()}
Let $\la/\mu$ be a skew shape with $|\la/\mu|=k$. Then 
$$
\chi^{\la/\mu}\big((1,2,\dots,k)\big)=\begin{cases}
(-1)^{L(\la/\mu)}, 
& \text{if $\la/\mu$ is a skew hook,}\\
0, &\text{otherwise.}
\end{cases}
$$
\end{Theorem}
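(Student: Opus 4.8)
The plan is to compute the value of the character $\chi^{\la/\mu}$ on the full $k$-cycle $(1,2,\dots,k)$ by exploiting the Young orthogonal form established in Proposition~\ref{PYOFSS}. Since the full cycle equals the product $s_1 s_2 \dots s_{k-1}$ of consecutive simple transpositions, I would express the character value as a trace
$$
\chi^{\la/\mu}\big((1,2,\dots,k)\big)=\tr\big(s_1 s_2\cdots s_{k-1}\mid V^{\la/\mu}\big),
$$
and evaluate this trace directly in the orthogonal basis $\{w_T\}$. The diagonal matrix entry of the operator $s_1\cdots s_{k-1}$ in the basis vector $w_T$ can be computed by repeatedly applying the formula $s_r w_T=\rho_r w_T+\sqrt{1-\rho_r^2}\,w_{s_rT}$ from right to left; the coefficient $\rho_r=(\Res T_{r+1}-\Res T_r)^{-1}$ depends only on contents. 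The key combinatorial observation is that the only tableaux $T$ contributing a nonzero diagonal term are those for which the sequence of swaps returns to $T$ at the end, which forces strong constraints on the shape of $\la/\mu$.

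First I would reduce to the connected case. If $\la/\mu$ is disconnected, write $\la/\mu=\ga\cup\de$ with $\ga,\de$ disconnected, so by Lemma~\ref{LDisj} we have $V^{\la/\mu}\cong\ind^{S_{c+d}}_{S_c\times S_d}(V^\ga\boxtimes V^\de)$. The full $k$-cycle is not conjugate into the Young subgroup $S_c\times S_d$ (it has a single cycle of length $c+d=k$, whereas any element of $S_c\times S_d$ splits into at least two cycles), so by the standard formula for the character of an induced representation, $\chi^{\la/\mu}$ vanishes on $(1,2,\dots,k)$. This handles the ``otherwise'' case whenever $\la/\mu$ fails to be connected; the remaining non-skew-hook case to dispose of is a connected skew shape containing two boxes on the same diagonal.

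Next I would treat the connected case by a careful analysis of which $T$ give a nonzero diagonal contribution. Expanding $s_1\cdots s_{k-1}w_T$ one factor at a time, each application of $s_r$ either keeps the tableau or swaps $r$ with $r+1$; to return to $w_T$ after all $k-1$ steps and get a diagonal coefficient, I expect the relevant tableaux to be exactly the standard tableaux where the numbers $1,\dots,k$ fill the shape ``along the hook'', which is only possible when $\la/\mu$ is a skew hook. For a skew hook, there is essentially one such tableau, and I would track the product of the scalars $\rho_r$ that accumulate; the content differences along a skew hook are all $\pm1$ so each $\rho_r=\pm1$ and the off-diagonal $\sqrt{1-\rho_r^2}$ terms vanish, meaning $s_r$ acts as the scalar $\pm1$ at each step. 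Multiplying these signs should yield exactly $(-1)^{L(\la/\mu)}$, since moving down a row (a vertical step in the hook) contributes a $-1$ via content difference $-1$ and horizontal steps contribute $+1$, and the number of rows minus one counts the down-steps. If the connected shape is not a skew hook, two boxes share a diagonal, and I expect the cancellation among signed contributions (or the obstruction from Corollary~\ref{C010703_3}-type constraints) to force the trace to be zero.

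The hardest part will be the bookkeeping in the connected case: pinning down precisely which tableaux contribute to the diagonal of $s_1\cdots s_{k-1}$ and verifying that the surviving signed scalars collapse to the single clean value $(-1)^{L(\la/\mu)}$ for a skew hook while cancelling to zero otherwise. An alternative, possibly cleaner, route would be an inductive argument on $k$: peel off the first simple transposition and relate $\tr(s_1\cdots s_{k-1})$ on $V^{\la/\mu}$ to the corresponding trace on a skew shape with one fewer box, using Proposition~\ref{PYOFSS} to control how the orthogonal-form coefficients propagate. Either way, the essential mechanism is that the full-cycle trace localizes onto hook-shaped fillings, and I would make sure the sign $(-1)^{L(\la/\mu)}$ emerges from counting the vertical steps of the hook.
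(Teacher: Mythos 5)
Your reduction of the disconnected case via Lemma~\ref{LDisj} and the vanishing of induced characters off the subgroup is exactly what the paper does. But the heart of your argument --- that the trace of $s_1s_2\cdots s_{k-1}$ in the orthogonal basis ``localizes onto hook-shaped fillings,'' with each surviving tableau contributing a product of scalars $\rho_r=\pm1$ --- is false, and small examples kill it. Take $\la/\mu=(2,2)$, $k=4$, with the two standard tableaux $T_1$ (rows $1\,2$ and $3\,4$) and $T_2$ (rows $1\,3$ and $2\,4$), so $\bi^{T_1}=(0,1,-1,0)$ and $\bi^{T_2}=(0,-1,1,0)$. A direct computation with the formulas of Proposition~\ref{PYOFSS} gives diagonal entries $-\tfrac12$ and $+\tfrac12$ for $s_1s_2s_3$: \emph{both} tableaux contribute nonzero diagonal terms, neither is a ``hook filling,'' and the answer $0$ arises only from cancellation between them. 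Likewise for the skew hook $(2,2)/(1)$ the two standard tableaux each contribute $-\tfrac12$ to the trace of $s_1s_2$, summing to $(-1)^{L(\la/\mu)}=-1$; it is not the case that one distinguished tableau contributes $\pm1$ and the rest contribute $0$, nor that the content differences $\Res T_{r+1}-\Res T_r$ are all $\pm1$ for every standard tableau of skew hook shape. So your proposed mechanism does not identify the contributing tableaux correctly, and you have no argument for the cancellations that actually produce the answer. A direct trace computation in the seminormal form can be pushed through, but it requires a genuine induction with nontrivial cancellation that your sketch does not supply.

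The paper sidesteps all of this. First (Lemma~\ref{L040703_2}) it handles straight shapes $\mu=\emptyset$ by observing that $L_2L_3\cdots L_k$ is the sum of \emph{all} $k$-cycles in $S_k$ and acts on a weight vector of weight $\bi$ by the scalar $i_2i_3\cdots i_k$; this vanishes unless $\la$ is a hook (some box other than $(1,1)$ has content $0$), and for $\la=(k-b,1^b)$ a count of the $(k-1)!$ $k$-cycles gives $\chi^\la((1,\dots,k))=(-1)^b$. Then the general skew case is reduced to computing the multiplicities $[V^{\la/\mu}:V^{(k-b,1^b)}]$ of hook modules: if $\la/\mu$ contains a $2\times2$ square, a surjection $\ind_{S_4}^{S_k}V^{(2,2)}\to V^{\la/\mu}$ and the branching rule show no hook occurs, giving $0$; if $\la/\mu$ is a skew hook, Lemma~\ref{L050703_2} shows the unique hook constituent is $(k-b,1^b)$ with $b=L(\la/\mu)$ and multiplicity one. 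This converts the hard trace identity into a multiplicity computation governed by the branching rule, which is where the clean sign $(-1)^{L(\la/\mu)}$ really comes from.
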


Before proving Theorem~\ref{TMNG}, we note the following corollary, which provides us with a very effective way to evaluate an irreducible character on a given element.

\begin{Corollary}\label{CMN}%{\rm \cite{}}%
{\bf (Murnaghan-Nakayama Rule)} Let $\la/\mu$ be a skew shape with $|\la/\mu|=k$, and $c$ be an element of $S_k$ whose cycle shape is $\rho=(\rho_1,\dots, \rho_l)$. Then
$$
\chi^{\la/\mu}(c)=\sum_H (-1)^{L(H)},
$$
where the sum is over all sequences $H$ of partitions
$$
\mu=\la(0)\subset \la(1)\subset\dots\subset\la(l)=\la
$$
such that $\la(i)/\la(i-1)$ is a skew hook with $|\la(i)/\la(i-1)|=\rho_i$ for all $1\leq i\leq l$, and $L(H)=\sum_{i=1}^l L(\la(i)/\la(i-1))$.  
\end{Corollary}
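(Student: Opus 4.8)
The plan is to prove the Corollary by induction on the number of cycles $l$, using Theorem~\ref{TMNG} as the base case and peeling off one cycle at a time. The base case $l=1$ is exactly Theorem~\ref{TMNG}: here $c$ is a single $k$-cycle, and the only admissible chains are $\mu=\la(0)\subset\la(1)=\la$ with $\la/\mu$ a skew hook, so the two formulas agree. For the inductive step I would choose, as a representative of the conjugacy class of cycle type $\rho$, the element $c=c'c_l$ where $c'$ is a product of $l-1$ cycles of type $\rho'=(\rho_1,\dots,\rho_{l-1})$ supported on the first $k-\rho_l$ letters and $c_l$ is a single $\rho_l$-cycle on the last $\rho_l$ letters; since $\chi^{\la/\mu}$ is a class function this is harmless. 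Then $c$ lies in the Young subgroup $S_{k-\rho_l}\times S_{\rho_l}$, so I can compute its character through the restriction of $V^{\la/\mu}$ to this subgroup.

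The key structural ingredient I need is a branching rule for skew shapes, namely
$$\res_{S_{k-j}\times S_j}V^{\la/\mu}\cong\bigoplus_\nu V^{\nu/\mu}\boxtimes V^{\la/\nu},$$
where $j=\rho_l$ and $\nu$ runs over partitions with $\mu\subseteq\nu\subseteq\la$ and $|\nu/\mu|=k-j$. I would establish this exactly as in the proof of Lemma~\ref{LDisj}, using Proposition~\ref{PYOFSS}: given a standard $\la/\mu$-tableau $T$, the boxes holding $1,\dots,k-j$ form a standard $\nu/\mu$-tableau for a uniquely determined intermediate shape $\nu$, while the boxes holding $k-j+1,\dots,k$ form a standard $\la/\nu$-tableau. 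The subgroup $S_{k-j}\times S_j$ is generated by the simple transpositions $s_r$ with $r\neq k-j$, and by the orthogonal-form formulas each such $s_r$ preserves the intermediate shape $\nu$ and acts within the first or second block only. Hence the span $U_\nu$ of the $w_T$ with fixed intermediate shape $\nu$ is $(S_{k-j}\times S_j)$-invariant, and comparing the action formulas identifies $U_\nu$ with $V^{\nu/\mu}\boxtimes V^{\la/\nu}$.

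Granting the branching rule, the inductive step is a short computation. Since the character of an outer tensor product factors, evaluating the character of the restriction at $(c',c_l)$ gives
$$\chi^{\la/\mu}(c)=\sum_\nu \chi^{\nu/\mu}(c')\,\chi^{\la/\nu}(c_l).$$
By Theorem~\ref{TMNG}, the value $\chi^{\la/\nu}(c_l)$ equals $(-1)^{L(\la/\nu)}$ when $\la/\nu$ is a skew hook of size $\rho_l$ and vanishes otherwise, so only such $\nu$ survive; by the inductive hypothesis each surviving $\chi^{\nu/\mu}(c')$ is the sum of $(-1)^{L(H')}$ over chains $H'\colon\mu=\la(0)\subset\dots\subset\la(l-1)=\nu$ with each $\la(i)/\la(i-1)$ a skew hook of size $\rho_i$. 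Appending $\la(l)=\la$ sets up a bijection between pairs $(\nu,H')$ and full chains $H$ of the required type, under which $L(H)=L(H')+L(\la/\nu)$, so the signs multiply correctly and I obtain $\chi^{\la/\mu}(c)=\sum_H(-1)^{L(H)}$, completing the induction.

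I expect the main obstacle to be the verification of the skew-shape branching rule in the second paragraph. Although it is wholly analogous to Lemma~\ref{LDisj}, one must check carefully that the intermediate shape $\nu$ is well defined, i.e.\ that the smallest $k-j$ entries of a standard skew tableau always fill a genuine diagram difference $\nu/\mu$, and that the orthogonal-form matrix coefficients depend only on the relative positions within each block, so that $U_\nu$ really is a submodule isomorphic to the stated outer tensor product. The remaining points---that character values depend only on cycle type, that $c_l$ may be read as a full $\rho_l$-cycle so that Theorem~\ref{TMNG} applies verbatim, and that $L$ is additive along the chain---are routine.
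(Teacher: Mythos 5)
Your proposal is correct and takes essentially the same route as the paper: the paper's proof restricts $V^{\la/\mu}$ to the full Young subgroup $S_{\rho_1}\times\dots\times S_{\rho_l}$ in one step, obtaining the decomposition indexed by chains of partitions and then invoking Theorem~\ref{TMNG} factor by factor, and your induction on $l$ is precisely the unrolled version of that iterated two-factor restriction. Your extra care in verifying the two-factor skew branching rule via Proposition~\ref{PYOFSS}, in the manner of Lemma~\ref{LDisj}, merely fills in a step the paper dispatches with the phrase ``by the branching rule.''
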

\begin{proof}
By the branching rule, for $m<k$ we have 
$$
\res_{S_{m}\times S_{k-m}} V^{\la/\mu}=\oplus_{\nu}V^{\nu/\mu}\boxtimes V^{\la/\nu},
$$ 
where the sum is over all partitions $\nu$ with $\mu\subset\nu\subset\la$ such that $|\nu/\mu|=m$. More generally,
$$
\res_{S_{\rho_1}\times \dots\times S_{\rho_l}}V^{\la/\mu}=\bigoplus_{\mu=\la(0)\subset \la(1)\subset\dots\subset\la(l)=\la}V^{\la(1)/\la(0)}\boxtimes \dots\boxtimes V^{\la(l)/\la(l-1)},
$$ 
Now the result follows from Theorem~\ref{TMNG}.
\end{proof}

We proceed to prove Theorem~\ref{TMNG}. Fix a skew shape $\la/\mu$ with $|\la/\mu|=k$. 

\begin{Lemma}\label{L040703_2}%{\rm \cite{}}%{\bf ()}
Theorem~\ref{TMNG} is true for $\mu=\emptyset$.
\end{Lemma}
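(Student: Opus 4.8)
We must show that for a skew shape $\la/\mu$ with $|\la/\mu|=k$ and $\mu=\emptyset$ (so $\la/\mu=\la$ is a genuine partition of $k$), the value $\chi^\la\big((1,2,\dots,k)\big)$ equals $(-1)^{L(\la)}$ if $\la$ is a skew hook (i.e.\ a single connected hook shape) and $0$ otherwise.

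**Overall plan.** The cleanest route is to evaluate the character directly on the long cycle $c=(1,2,\dots,k)$ using the combinatorial model for $V^\la$ provided by Young's orthogonal form. Recall that $c=s_1 s_2\cdots s_{k-1}$, a product of consecutive simple transpositions. The plan is to compute the trace of the operator $s_1 s_2\cdots s_{k-1}$ acting on the GZ-basis $\{w_T\}$ of $V^\la$, where $T$ ranges over standard $\la$-tableaux, by tracking which basis vectors $w_T$ can contribute a nonzero diagonal coefficient after applying the whole product. The key is that each $s_r$ acts by Theorem~\ref{TYOF} as $s_r w_T=\rho w_T+\sqrt{1-\rho^2}\,w_{s_r T}$ with $\rho=(\Res T_{r+1}-\Res T_r)^{-1}$; so applying the product sends $w_T$ to a linear combination of $w_S$ for various $S$, and the trace is $\sum_T \langle w_T,\, c\, w_T\rangle$ (the basis being orthonormal by construction in Theorem~\ref{TYOF}).

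**Key steps.** First I would isolate the diagonal contribution: applying $c=s_1\cdots s_{k-1}$ to $w_T$, the only way to return to $w_T$ is through a sequence of moves whose net effect on the underlying tableau is trivial. I would argue combinatorially that the permutation $(1,2,\dots,k)$ acts on the set of boxes by a cyclic relabelling, and that a tableau $T$ contributes to the diagonal exactly when the sequence of content-differences read along $1,2,\dots,k$ is forced; this is where the skew-hook condition enters. The crucial dichotomy is: if $\la$ has two boxes on the same diagonal (equivalently, is not a skew hook), then at some step $r$ we encounter $\Res T_{r+1}=\Res T_r$, which is impossible for a standard tableau, or the tableau-combinatorics force a branching that makes the diagonal coefficient vanish after summation, giving trace $0$. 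If $\la$ is a skew hook, there is essentially a unique ``staircase'' standard tableau along the hook, and the product of the scalar coefficients telescopes to a sign $(-1)^{L(\la)}$, where $L(\la)$ counts the number of rows minus $1$ that the hook spans.

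**Main obstacle.** The hard part will be controlling the off-diagonal cascade: each $s_r$ produces both a diagonal term $\rho$ and an off-diagonal term taking $T$ to $s_r T$, so naively the trace is a sum over many intertwined paths through tableau-space, and I must show all the genuinely off-diagonal contributions cancel or never return to the start. I expect the cleanest way to tame this is not a brute-force trace computation but an inductive/recursive argument: peel off the last box occupied by $k$ and relate $\chi^\la\big((1,\dots,k)\big)$ to the analogous character for $\la$ with that box removed, using the branching rule and Lemma~\ref{LDisj} to handle disconnected pieces (where the character on a \emph{single} long cycle must vanish unless the shape is connected). This reduces the skew-hook case to a telescoping recursion on the sign and reduces the non-skew-hook case to the observation that a long cycle cannot act with nonzero trace on an induced module from a proper Young subgroup. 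The delicate point is verifying that the sign bookkeeping $(-1)^{L(\la)}$ matches the number of vertical steps in the hook at each stage of the recursion.
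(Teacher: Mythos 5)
There is a genuine gap here: neither of your two proposed routes is actually carried out, and the second one cannot work as stated. Your primary plan is to compute the trace of $s_1s_2\cdots s_{k-1}$ on the orthogonal basis $\{w_T\}$ and argue that the off-diagonal cascade cancels; but you explicitly defer this (``the hard part will be controlling the off-diagonal cascade'') and the claimed dichotomy --- that a repeated content ``forces a branching that makes the diagonal coefficient vanish after summation'' --- is an assertion of the conclusion, not an argument. Your fallback is a recursion that peels off the box containing $k$ and invokes the branching rule, but the branching rule computes $\res_{S_{k-1}}V^\la$, and the long cycle $(1,2,\dots,k)$ does not lie in $S_{k-1}$, so restricting to $S_{k-1}$ gives no information about $\chi^\la\big((1,\dots,k)\big)$. (This is exactly why the Murnaghan--Nakayama recursion peels off an entire cycle at a time, never a single box.) The appeal to Lemma~\ref{LDisj} is also beside the point here, since for $\mu=\emptyset$ the shape $\la$ is always connected; the real dichotomy is hook versus non-hook, i.e.\ whether $\la$ contains the box $(2,2)$.

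The idea you are missing is the Jucys--Murphy computation, which makes the lemma almost immediate. The product $L_2L_3\cdots L_k$ is the sum of all $k$-cycles in $S_k$, and each $L_j$ acts diagonally on the GZ-basis, so $L_2\cdots L_k$ acts on $w_T$ by the scalar $i_2i_3\cdots i_k$, the product of the contents of the boxes of $T_2,\dots,T_k$. Since $\Res T_1=0$, this product vanishes whenever some other box has content $0$, i.e.\ unless $\la$ is a hook; and for $\la=(k-b,1^b)$ every standard tableau gives the same product $(-1)^b b!(k-b-1)!$, while $\dim V^\la=\binom{k-1}{b}$. Taking the trace and dividing by the number $(k-1)!$ of $k$-cycles (all conjugate, so the character is constant on them) yields $(-1)^b=(-1)^{L(\la)}$. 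This completely sidesteps the off-diagonal bookkeeping that stalls your argument.
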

\begin{proof}
It is easy to see that $L_2L_3\dots L_k$ is the sum of all $k$-cycles in $S_k$. If $v\in V^\la$ is a weight vector of weight $\bi$, then $L_2L_3\dots L_kv=i_2i_3\dots i_nv$, which is zero unless $\la$ is a hook, see Theorem~\ref{TOV}. On the other hand, if $\la=(k-b,1^b)$ is a hook with $L(\la)=b$, then, again by Theorem~\ref{TOV}, we have $i_2\dots i_n=(-1)^bb!(k-b-1)!$ and $\dim V^\la=\binom{k-1}{b}$.  Now the result follows from the fact that there are $(k-1)!$\, $k$-cycles in $S_k$. 
\end{proof}

\begin{Lemma}\label{L050703}%{\rm \cite{}}%{\bf ()}
If $\la/\mu$ is not connected, then $\chi^{\la/\mu}\big((1,2,\dots,k)\big)=0$.
\end{Lemma}
\begin{proof}
Let $\la/\mu=\ga\cup\de$ where $\ga$ and $\de$ are skew shapes disconnected from each other. Let $c:=|\ga|$ and $d:=|\de|$. 
By Lemma~\ref{LDisj}, we have   
$V^{\la/\mu}\cong \ind^{S_{c+d}}_{S_c\times S_d}(V^\ga\boxtimes V^\de).$ 
Now the lemma follows from the following standard general fact: if $H$ is a subgroup of a finite group $G$, $g\in G$ is not conjugate to an element of $H$, and $V$ is a $\C G$-module induced from $H$, then the character of $V$ on $g$ is zero, cf. Exercise~\ref{ExZeroChar}
\end{proof}

\begin{Lemma}%\label{}%{\rm \cite{}}%{\bf ()}
If $\la/\mu$ has two nodes on the same diagonal, and $\nu=(a,1^{k-a})$ be an aritrary hook with $k$-boxes, then $V^\nu$ is not a composition factor of $V^{\la/\mu}$. In particular,  $\chi^{\la/\mu}\big((1,2,\dots,k)\big)=0.$
\end{Lemma}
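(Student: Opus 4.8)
The statement has two parts, and the second (the character value) is a formal consequence of the first. Since $\C S_k'$ is semisimple, as an $S_k'$-module $V^{\la/\mu}$ is a direct sum of irreducibles $V^\nu$ with multiplicities $m_\nu$, and
$$\chi^{\la/\mu}\big((1,2,\dots,k)\big)=\sum_\nu m_\nu\,\chi^\nu\big((1,2,\dots,k)\big).$$
By Lemma~\ref{L040703_2} (i.e.\ Theorem~\ref{TMNG} for $\mu=\emptyset$) the value of $\chi^\nu$ on a $k$-cycle vanishes unless $\nu$ is a hook; so once we know that no hook $V^\nu$ occurs as a composition factor, every term of the sum is zero. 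Thus it suffices to prove the first assertion.

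The plan is to recast the first assertion as a single operator identity. Let $L_1',\dots,L_k'$ be the Jucys--Murphy elements of $S_k'$, defined as in \eqref{EJM} for the last $k$ letters, and put $z:=L_2'L_3'\cdots L_k'$; by the computation in the proof of Lemma~\ref{L040703_2}, $z$ is the class sum of all $k$-cycles in $S_k'$ and in particular is central in $\C S_k'$. If $V^\nu$ is a hook and $v$ is a weight vector of weight $(0,i_2,\dots,i_k)$, then $L_j'v=i_jv$, hence $zv=i_2i_3\cdots i_k\,v$; since the contents of a hook are pairwise distinct with exactly one equal to $0$ (occurring in position $1$), all $i_j$ with $j\ge 2$ are nonzero, so $z$ acts on $V^\nu$ as a nonzero scalar. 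As $z$ is central and $\C S_k'$ is semisimple, $z$ annihilates $V^{\la/\mu}$ if and only if no hook occurs as a composition factor. By Lemma~\ref{L040703} the vector $w_T$ generates $V^{\la/\mu}$ over $\C S_k'$ for any standard $\la/\mu$-tableau $T$, and $z$ is central, so the whole assertion reduces to the single identity $z\,w_T=0$.

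It remains to evaluate $z\,w_T$ in the orthogonal basis of Proposition~\ref{PYOFSS}. Writing the $k$-cycle as $(1,2,\dots,k)=s_1s_2\cdots s_{k-1}$ and applying the formulas of Proposition~\ref{PYOFSS}, the matrix entries of each $k$-cycle in the basis $\{w_T\}$ are products of factors $\rho=(\Res T_{r+1}-\Res T_{r})^{-1}$ and $\sqrt{1-\rho^2}$ read off from the content sequence $(\Res T_1,\dots,\Res T_k)$; summing over all $k$-cycles gives the action of $z$. The role of the hypothesis is that $\la/\mu$ has two boxes of equal content, so this content sequence has a repeated entry, and I would show that the repetition forces every coordinate of $z\,w_T$ to cancel, in parallel with the single-row/column bookkeeping behind Lemma~\ref{L040703_2}. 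The main obstacle is carrying out this transfer-type summation cleanly, namely proving that a repeated content makes the \emph{whole} coefficient vanish rather than merely making the trace vanish; the alternative route, equivalent by the reduction above, is to compute the simultaneous eigenvalues of $L_1',\dots,L_k'$ on $V^{\la/\mu}$ and to show that, because two boxes share a diagonal, every such weight has a repeated entry and therefore cannot be the weight of a hook.
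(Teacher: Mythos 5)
Your reduction is fine as far as it goes: the second claim does follow from the first via Lemma~\ref{L040703_2}, and since $z=L_2'\cdots L_k'$ is central in $\C S_k'$, acts as the nonzero scalar $(-1)^bb!(k-b-1)!$ on the hook $V^{(k-b,1^b)}$ and as $0$ on every non-hook (a non-hook contains the box $(2,2)$ of content $0$), while $w_T$ generates $V^{\la/\mu}$ over $\C S_k'$ by Lemma~\ref{L040703}, the lemma is indeed equivalent to the single identity $z\,w_T=0$. But neither of your two routes to that identity is actually carried out, and the second rests on a confusion. Route A (summing the matrix entries of all $k$-cycles in the basis of Proposition~\ref{PYOFSS}) is left as an acknowledged obstacle, so it is not a proof. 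Route B fails because Proposition~\ref{PYOFSS} diagonalizes the elements $L_{n-k+1},\dots,L_n\in Z_{n-k,k}$, which are \emph{not} the Jucys--Murphy elements $L_1',\dots,L_k'$ of $S_k'$: they differ by $\sum_{m\le n-k}(m,n-k+j)$, and already $L_1'=0$ while $L_{n-k+1}$ acts on $w_T$ with eigenvalue $\Res T_1$, which is nonzero in general. The repeated entry in $(\Res T_1,\dots,\Res T_k)$ therefore says nothing about the joint spectrum of the $L_j'$, and computing that joint spectrum is literally equivalent to decomposing $V^{\la/\mu}$ as an $S_k'$-module, i.e.\ to the statement being proved. (A disconnected shape of two boxes with distinct contents $c_1,c_2$ already separates the two spectra: there $V^{\la/\mu}$ is the regular $\C S_2$-module with $L'$-weights $(0,\pm 1)$, whatever $c_1,c_2$ are.)

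The missing idea in the paper's argument is combinatorial rather than spectral: two boxes of $\la/\mu$ on one diagonal force a full $2\times 2$ square inside $\la/\mu$, hence by Proposition~\ref{PYOFSS} a copy of $V^{(2,2)}$ as a submodule for a suitably embedded $S_4$; cyclicity (Lemma~\ref{L040703}) and Frobenius reciprocity then give a surjection $\ind_{S_4}^{S_k}V^{(2,2)}\onto V^{\la/\mu}$, and the branching rule shows every constituent of this induced module contains $(2,2)$, so no hook occurs. If you want to keep your central-element strategy, you still need an input of this kind to control which irreducibles appear; the element $z$ by itself only reformulates the problem, it does not solve it.
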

\begin{proof}
The second statement follows from the first by Lemma~\ref{L040703_2}. 
By assumption a $2\times 2$ square\, 
$
\hoogte=6pt    \newdimen\breedte   \breedte=7pt  
\newdimen\dikte     \dikte=0.5pt 
\diagram{
&  \cr
&\cr
}
$\, is contained in $\la/\mu$. It follows from Proposition~\ref{PYOFSS} that $V^{(2,2)}$  is an $S_4$-submodule of $V^{\la/\mu}$ (for $S_4$ embedded not necessarily with respect to the first $4$ letters, but such $S_4$ is conjugate to the canonical one anyway). By Frobenius reciprocity and Lemma~\ref{L040703}, there is a surjection $\ind^{S_k}_{S_4}V^{(2,2)}\to V^{\la/\mu}$, and the result now follows from the branching rule. 
\end{proof}

\begin{Lemma}\label{L050703_2}%{\rm \cite{}}%{\bf ()}
Let $\la/\mu$ be a skew hook, and $\nu=(k-b,1^b)$. Then $V^\nu$ appears as a composition factor of $V^{\la/\mu}$ if and only if $b=L(\la/\mu)$, in which case its multiplicity is one. 
\end{Lemma}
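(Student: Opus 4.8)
The plan is to compute, for each $a$, the multiplicity
$m_a:=\dim\Hom_{S_k'}\!\big(V^{(k-a,1^a)},V^{\la/\mu}\big)=[V^{\la/\mu}:V^{(k-a,1^a)}]$
of the hook $V^{(k-a,1^a)}$ in the $\C S_k'$-module $V^{\la/\mu}$, and to show $m_a=\delta_{a,b}$ with $b:=L(\la/\mu)$. (Note $V^{\la/\mu}$ need not be irreducible here: it may have non-hook factors, and the claim only concerns its hook factors.)

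First I would turn $m_a$ into a branching multiplicity inside $V^\la$. Since $V^{\la/\mu}=\Hom_{S_{n-k}}(V^\mu,\res_{S_{n-k}}V^\la)$, the adjunction used in Lemma~\ref{L270603} gives $m_a=\dim\Hom_{S_{n-k}\times S_k'}\!\big(V^\mu\boxtimes V^{(k-a,1^a)},\ \res_{S_{n-k}\times S_k'}V^\la\big)$, i.e. $m_a$ is the multiplicity of $V^\mu\boxtimes V^{(k-a,1^a)}$ in $\res_{S_{n-k}\times S_k'}V^\la$. By the Littlewood--Richardson rule this equals the number of semistandard fillings of $\la/\mu$ of content $(k-a,1^a)$ whose reverse reading word is a lattice (Yamanouchi) word. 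Everything thus reduces to a count of such fillings of the skew hook.

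The heart of the argument is this count. Traverse the skew hook along the strip from its box of largest content to its box of smallest content; because the shape is a skew hook this visits the boxes in strictly decreasing order of content, crosses exactly the $b$ columns where consecutive rows overlap, and realizes precisely the reverse reading word. Semistandardness forces the entries to weakly decrease along each row (hence along each horizontal stretch of the path) and to strictly increase across each of the $b$ overlaps; and, the content being the hook $(k-a,1^a)$, the lattice condition forces the $a$ entries $2,3,\dots,a+1$ (each used once) to occur in increasing order along the path and forbids the word from beginning with an entry $>1$. Two conclusions follow: each row contains at most one entry $>1$ (two such entries in one row would decrease along the path), while each of the $b$ lower rows contains at least one (the rightmost box of that row lies below the overlap, where the inequality is strict, so it exceeds $1$), and the top row contains none (else the word would start with $2$, breaking the lattice condition at its first letter). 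Hence $a=b$; moreover the large entries are then pinned down — value $i+1$ in the rightmost box of the $(i+1)$-st row and ones everywhere else — so the filling is unique. This gives $m_b=1$ and $m_a=0$ for $a\neq b$.

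The step I expect to be the main obstacle is the bookkeeping in this count: one must line up the reverse reading word with the strip-path, keep straight which adjacencies impose strict versus weak inequalities (the $b$ vertical overlaps versus the horizontal steps), and verify that the lattice condition simultaneously \emph{forces} one large entry in each lower row and \emph{vetoes} one in the top row. A secondary concern, if one prefers not to invoke the Littlewood--Richardson rule, is to justify the first reduction intrinsically; but the identification of $m_a$ with the above filling count is exactly the branching content already carried by Proposition~\ref{PYOFSS} and Lemma~\ref{LDisj}, so the filling count is the genuine mathematical content of the lemma.
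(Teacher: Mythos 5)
Your Littlewood--Richardson computation is, in itself, correct: on a ribbon the $b$ column overlaps each force exactly one entry $>1$ (strict increase down the overlap column), the lattice condition forbids one in the top row (the reading word would start with a letter $\geq 2$) and forces the values $2,\dots,a+1$ to appear in increasing order, so $a=b$ and the filling is unique. But this is a genuinely different, and much heavier, route than the paper's. The paper never touches the Littlewood--Richardson rule. It first uses translation invariance (Proposition~\ref{PYOFSS}) to put $\la/\mu$ in the minimal position; there $\nu=(k-b,1^b)\subseteq\la$ holds precisely when $b=L(\la/\mu)$, so the vanishing for $b\neq L(\la/\mu)$ is immediate from the branching rule. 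For $b=L(\la/\mu)$ it observes that in this position $\la/\nu$ is a translate of the diagram of $\mu$, so $V^{\la/\nu}\cong V^\mu$ is irreducible, giving $[\res_{S_k\times S_{n-k}}V^\la:V^\nu\boxtimes V^\mu]=1$; the identity $[V^{\la/\mu}:V^\nu]=[\res_{S_{n-k}\times S_k}V^\la:V^\mu\boxtimes V^\nu]$ --- the same reduction you start from --- then finishes the proof. What the paper's trick buys is self-containment within the tools already established (Theorem~\ref{TOV} and Proposition~\ref{PYOFSS}); what your route buys is a uniform formula for all hook multiplicities in an arbitrary skew shape, at the cost of citing a theorem far deeper than the lemma itself.

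That cost is also where your write-up has a genuine gap. You assert that the identification of $m_a$ with the count of lattice fillings is ``exactly the branching content already carried by Proposition~\ref{PYOFSS} and Lemma~\ref{LDisj}.'' It is not. Those results give a weight basis of $V^{\la/\mu}$ indexed by standard tableaux and the factorization over disconnected components; neither determines which irreducibles occur in $V^{\la/\mu}$ or with what multiplicity, which is precisely what the Littlewood--Richardson rule asserts (and what this lemma establishes in a special case). So within the paper's framework your proof is unavailable unless you import the LR rule wholesale; the elementary part of your reduction, $m_a=[\res_{S_{n-k}\times S_k}V^\la:V^\mu\boxtimes V^{(k-a,1^a)}]$, is valid, and it is exactly at that point that the paper substitutes its translation-and-irreducibility argument for your filling count.
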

\begin{proof}
It follows from Proposition~\ref{PYOFSS} that translation of $\la/\mu$ does not change the corresponding $S_k$-module. So we may assume that $\la$ and $\mu$ are minimal possible, as in the picture
$$
\begin{picture}(120,80) 
\put(0,0){\line(0,1){80}}
\put(0,80){\line(1,0){120}}
\put(120,80){\line(0,-1){20}}
\put(120,60){\line(-1,0){30}}
\put(90,60){\line(0,-1){30}}
\put(90,30){\line(-1,0){40}}
\put(50,30){\line(0,-1){20}}
\put(50,10){\line(-1,0){10}}
\put(40,10){\line(0,-1){10}}
\put(40,0){\line(-1,0){40}}
\put(110,80){\line(0,-1){10}}
\put(110,70){\line(-1,0){30}}
\put(80,70){\line(0,-1){30}}
\put(80,40){\line(-1,0){40}}
\put(40,40){\line(0,-1){20}}
\put(40,20){\line(-1,0){10}}
\put(30,20){\line(0,-1){10}}
\put(30,10){\line(-1,0){30}}
\put(30,50){\makebox(0,0){$\mu$}}
\end{picture}\\
$$
Now, if $b\neq L(\la/\mu)$ then $\nu\not\subseteq\la$, so by the branching rule, $V^\nu$ does not appear as a composition factor of $\res_{S_k}V^\la$, hence it does not appear in $V^{\la/\mu}$ either. 

Let $b=L(\la/\mu)$. Note that $\la/\nu$ has shape $\mu$. So it follows from Proposition~\ref{PYOFSS} that $V^{\la/\nu}$ and $V^\mu$ are isomorphic as $\C S_{n-k}$-modules. So $[\res_{S_{k}\times S_{n-k}}V^\la:V^\nu\boxtimes V^\mu]=1$. Then $[\res_{S_{n-k}\times S_{k}}V^\la:V^\mu\boxtimes V^\nu]=1$. It remains to note that $[V^{\la/\mu}:V^\nu]=[\res_{S_{n-k}\times S_{k}}V^\la:V^\mu\boxtimes V^\nu]$. 
\end{proof}

Theorem~\ref{TMNG} follows from Lemmas~\ref{L040703_2} and~\ref{L050703}-\ref{L050703_2}.

\begin{Remark}%\label{}%{\rm \cite{}}%{\bf ()}
{\rm 
We sketch another interpretation of the graph $\mathbb{Y}$. Let $\mathfrak{g}= \mathfrak{gl}_\infty(\C)$ be the Lie algebra of all $\Z\times\Z$-matrices over $\C$ with only finitely many non-zero entries. Thus, the matrix units $\{E_{ij}\mid i,j\in \Z\}$ form a basis of $\mathfrak{g}$. The Lie algebra $\mathfrak{g}$ acts on the {\em  fermionic Fock space} $\mathcal{F}$, which is the complex vector space, whose basis consists of the formal semi-infinite wedges $v_{i_0}\wedge v_{i_{1}}\wedge v_{i_{2}}\wedge\cdots$ such that $i_0>i_1>\dots$ and $i_k=-k$ for $k\gg 0$. To write down the action we follow the usual rules for the action of Lie algebra on a wedge power of a module. For example, 
$$
E_{2,-1}\cdot v_0\wedge v_{-1}\wedge v_{-2}\wedge\cdots=
v_0\wedge v_{2}\wedge v_{-2}\wedge\cdots = -v_2\wedge v_{0}\wedge v_{-2}\wedge\cdots.
$$
In fact, more than just $\mathfrak{g}$ acts on $\mathcal{F}$. Let $\La_k=\sum_{j-i=k}E_{i,j}$ be the $k$th diagonal. Even though $\La_k$ is not an element of $\mathfrak{g}$, we can still extend the action of $\mathfrak{g}$ to it, at least if $k\neq 0$. For example,
$$
\La_{-2} \cdot v_0\wedge v_{-1}\wedge v_{-2}\wedge\cdots=
v_2\wedge v_{-1}\wedge v_{-2}\wedge\cdots -v_1\wedge v_{0}\wedge v_{-2}\wedge\cdots.
$$

It is convenient to label semi-infinite wedges by partitions: to a partition $\la=(\la_1\geq\la_2\geq\dots)$ we associate the  vector $v_\la:=v_{\la_1}\wedge v_{\la_2-1}\wedge v_{\la_3-2}\wedge\cdots$.  For example, $v_\emptyset=v_0\wedge v_{-1}\wedge v_{-2}\wedge\cdots$. 
Then $\{v_\la\mid\la\in\mathcal{P}\}$ is a basis of $\mathcal{F}$, and we have in some sense recovered the vertices of $\mathbb{Y}$. For the edges, note that $E_{i,i+1}v_\la=v_\mu$ where $\mu$ is obtained from $\la$ by removing a removable node of content $i$, if it exists, and otherwise $v_\mu$ is interpreted as $0$. Similarly, $E_{i+1,i}v_\la=v_\nu$ where $\nu$ is obtained from $\la$ by adding an addable node of content $i$, if it exists, and otherwise $v_\nu$ is interpreted as $0$. Thus the action of the Chevalley generators of $\mathfrak{g}$ on the basis vectors $\{v_\la\}$ recovers the edges of $\mathbb{Y}$. Is it possible to explain this remarkable coincidence of two graphs, one coming from representation theory of $S_n$ and the other from (completely different) representation theory of $\mathfrak{gl}_\infty(\C)$? ... 

We make one more observation along these lines. It is easy to see that for $i<j$ we have $E_{i,j}v_\la=\eps v_\mu$, where $\mu$ is obtained from $\la$ by removing a skew hook of length $j-i$, starting at the node of content $i$ and ending at the node of content $j-1$; if no such hook exists, interpret $v_\mu$ as $0$. Moreover, $\eps=(-1)^{L(\la/\mu)}$. It follows that for $k>0$ we have 
$$
\La_k v_\la=\sum (-1)^{L(\la/\mu)} v_\mu
$$
where the sum is over all $\mu$ such that $\la/\mu$ is a skew hook with $|\la/\mu|=k$. So the Murnaghan-Nakayama rule can be interpreted as follows:  the value $\chi^\la(c_\rho)$ of the irreducible character $\chi^\la$ on an element $c_\rho$ with cycle-shape $(\rho_1,\rho_2,\dots,\rho_l)$ is equal to the coefficient of $v_\emptyset$ in $\La_{\rho_1}\La_{\rho_2}\dots \La_{\rho_l} v_\la$.  Or better yet:
\begin{equation}
\chi^\la(c_\rho)=(v_\la\,,\,\La_{-\rho_1}\La_{-\rho_2}\dots \La_{-\rho_l} v_\emptyset),
\end{equation}
where $(\cdot,\cdot)$ is the contravariant form on $\mathcal{F}$ normalized so that $(v_\emptyset,v_\emptyset)=1$. In fact, the form $(\cdot,\cdot)$ is determined from
$$
(v_\la,v_\mu)=\de_{\la,\mu}\qquad(\la,\mu\in\Par). 
$$
}
\end{Remark}

\chapter{Day Three}

\section{Heisenberg algebra and Boson-Fermion correspondence}
Recall the operators 
$$
\La_k=\sum_{i\in\Z}E_{i,i+k}\qquad(k\in\Z).
$$
These are linear operators on the infinite dimensional vector space
$$
V:=\bigoplus_{i\in \Z}\C\cdot v_i. 
$$
It is clear that these linear operators commute. So we can consider $V$ as a representation of the `silly Lie algebra'
$$
\a=\bigoplus_{k\in\Z}\C\cdot a_k,
$$
with commutation relations $[a_k,a_m]=0$ for all $k,m\in\Z$. In this representation, we map $a_k\mapsto\La_k$. 

We have noticed that the $\La_k$ also act on the Fock space $\F$ when $k\neq 0$. On the other hand the action of $\La_0$ is not well defined---it leads to a computation of an infinite sum. So we will force $a_0$ to act on $\F$ as zero. But now there is another problem: on $\F$ the operators $\La_k$ do not quite commute:

\begin{Lemma} \label{LLaComm}%{\rm \cite{}}%{\bf ()}
For any $m,n\in\Z$, we have 
$$
[\La_n,\La_k]=n\de_{n,-k}\id.
$$
\end{Lemma}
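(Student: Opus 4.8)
The plan is to compute the commutator directly on the wedge basis, translating each semi-infinite wedge $v_\la$ into its occupation set (Maya diagram) $S=S_\la\subset\Z$, where $S_\la=\{\la_j-j+1\mid j\geq 1\}$, so that $S_\emptyset=\Z_{\leq 0}$ and every charge-zero $S$ differs from $\Z_{\leq 0}$ in finitely many places, with equally many ``particles above $0$'' and ``holes below $0$''. Under this dictionary $E_{i,i+k}$ moves one particle: if $i+k\in S$ and $i\notin S$ it sends $S\mapsto(S\setminus\{i+k\})\cup\{i\}$ with a reordering sign $(-1)^{\#(\text{occupied sites strictly between }i\text{ and }i+k)}$, and acts by zero otherwise. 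Thus for $k\neq 0$ the operator $\La_k=\sum_i E_{i,i+k}$ is well defined on each $v_\la$ (only finitely many $i$ contribute), while $\La_0$ is the charge operator, which we have forced to act as $0$.

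First I would dispose of the easy cases and the operator part. If $n=0$ or $k=0$ the relation is trivial since $\La_0=0$, so I assume $n,k\neq 0$ and, using $[\La_n,\La_k]=-[\La_k,\La_n]$, take $n>0$. Applying the $\gl_\infty$ bracket $[E_{a,b},E_{c,d}]=\de_{b,c}E_{a,d}-\de_{d,a}E_{c,b}$ term by term (legitimate because on a fixed wedge only finitely many summands act) and summing gives formally $\La_{n+k}-\La_{n+k}$. When $n+k\neq 0$ this genuinely cancels and $[\La_n,\La_k]=0$; a residual term can survive only when the two moves chain back to the starting diagram, which forces $k=-n$. It then remains to treat $[\La_n,\La_{-n}]$ and to show it is scalar: the off-diagonal contributions (moving two distinct particles) pair up between the orderings $\La_n\La_{-n}$ and $\La_{-n}\La_n$ and cancel by exactly the sign matching that makes the $\gl_\infty$ bracket close, leaving only the diagonal (return-to-$S$) terms.

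Finally I would evaluate that diagonal scalar. In $\La_n\La_{-n}$ the diagonal terms come from moving a particle up from $p\in S$ to an empty site $p+n$ and then back down; each such path has sign $+1$, so the diagonal coefficient is $A_+(S)=\#\{p\in S\mid p+n\notin S\}$, and symmetrically $\La_{-n}\La_n$ contributes $A_-(S)=\#\{q\in S\mid q-n\notin S\}$. A short manipulation gives $A_+(S)-A_-(S)=\sum_{p\in\Z}([p\in S]-[p+n\in S])$, a conditionally convergent sum which must be regularized against the vacuum: writing $[p\in S]=([p\in S]-[p\leq 0])+[p\leq 0]$, the two $S$-dependent pieces are copies of the (vanishing) charge and cancel, while $\sum_p([p\leq 0]-[p+n\leq 0])=n$ counts the sites $-n<p\leq 0$. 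Hence $[\La_n,\La_{-n}]=n\cdot\id$ for $n>0$, and antisymmetry supplies the remaining sign, proving $[\La_n,\La_k]=n\de_{n,-k}\id$.

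As an independent check on the constant, once centrality is known I would instead apply both sides to $v_\emptyset$: since $\La_n v_\emptyset=0$ for $n>0$ (one cannot remove a ribbon from $\emptyset$) and $\La_n=\La_{-n}^{*}$ for the form with $(v_\la,v_\mu)=\de_{\la,\mu}$, the scalar equals $\|\La_{-n}v_\emptyset\|^2$, i.e.\ the number of size-$n$ hooks $(n-b,1^b)$, namely $n$. The main obstacle is the bookkeeping of the second paragraph — verifying that the off-diagonal two-particle terms cancel so that the commutator is genuinely a scalar — together with the careful vacuum regularization of the divergent diagonal sum in the third.
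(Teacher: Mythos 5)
Your argument is correct, and for the only hard case ($k=-n$) it takes a genuinely different route from the paper. Both proofs handle $n+k\neq 0$ identically, by pushing the $\gl_\infty$ bracket through the double sum and cancelling $\La_{n+k}-\La_{n+k}$. For $[\La_n,\La_{-n}]$ the paper avoids all of your bookkeeping by first proving the commutator is \emph{central}: using $[E_{i,j},\La_k]=E_{i,j+k}-E_{i-k,j}$ and the Jacobi identity it checks $[E_{i,j},[\La_k,\La_{-k}]]=0$, and since every $v_\la$ is reached from $v_\emptyset$ by applying $E_{i,j}$'s, it then suffices to evaluate on the vacuum; there $\La_{-k}v_\emptyset$ is a signed sum over the $k$ hooks of size $k$ and $\La_k$ undoes each hook, giving the scalar $k$. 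That vacuum count is exactly your closing ``independent check,'' so in effect you have reproduced the paper's evaluation step but replaced its centrality argument with a direct diagonal/off-diagonal analysis on Maya diagrams plus a regularized telescoping sum $\sum_p([p\in S]-[p+n\in S])=n$. Your route buys an explicit eigenvalue-style computation valid on every basis vector at once (and makes visible where the ``anomaly'' $n$ comes from, namely the $n$ sites $-n<p\le 0$ of the vacuum); the paper's route buys brevity and pushes all the delicate sign-matching into the single identity $[E_{i,j},\La_k]=E_{i,j+k}-E_{i-k,j}$. The one step you assert rather than prove --- that the two-particle off-diagonal terms cancel ``by the sign matching that makes the $\gl_\infty$ bracket close'' --- is legitimate (it is just the statement that finitely supported matrices act as an honest Lie algebra representation on the wedge, which the paper also uses silently in the $n+k\neq0$ case), but if you want the proof self-contained you should either verify those reordering signs once or, more efficiently, adopt the paper's centrality trick, which sidesteps them entirely.
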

\begin{proof}
If $n\neq -k$, then 
\begin{align*}
[\La_n,\La_k]&=(\sum_{i\in\Z}E_{i,i+n})(\sum_{j\in\Z}E_{j,j+k})-(\sum_{j\in\Z}E_{j,j+k})(\sum_{i\in\Z}E_{i,i+n})
\\
&=\sum_{i,j\in\Z}[E_{i,i+n},E_{j,j+k}]
\\
&=\sum_{i\in\Z}E_{i,i+n+k}-\sum_{j\in\Z}E_{j,j+k+n}=\La_{n+k}-\La_{n+k}=0.
\end{align*}

To deal with the commutator $[\La_k,\La_{-k}]$, we first observe that
\begin{equation}\label{EELa}
[E_{i,j},\La_k]=E_{i,j}\sum_{n\in\Z}E_{n,n+k}-\sum_{n\in\Z}E_{n,n+k}E_{i,j}=E_{i,j+k}-E_{i-k,j}.
\end{equation}
Now, by Leibnitz formula, we have
\begin{align*}
&[E_{i,j},[\La_k,\La_{-k}]]
\\=&[[E_{i,j},\La_k],\La_{-k}]+[\La_k,[E_{i,j},\La_{-k}]]
\\
=&[E_{i,j+k}-E_{i-k,j},\La_{-k}]+[\La_k,E_{i,j-k}-E_{i+k,j}]
\\
=&E_{i,j}-E_{i+k,j+k}-E_{i-k,j-k}+E_{i,j}-E_{i,j}+E_{i-k,j-k}+E_{i+k,j+k}-E_{i,j}=0.
\end{align*}
Thus $[\La_k,\La_{-k}]$ commutes with all $E_{i,j}$. Since we can obtain any basis vector $v_\la$ from $v_\emptyset$ by an application of several $E_{i,j}$, it follows that it suffices to prove that 
$$
[\La_k,\La_{-k}]v_\emptyset=kv_\emptyset.
$$
We may assume that $k>0$. Then $\La_kv_\emptyset=0$, and so 
$[\La_k,\La_{-k}]v_\emptyset=\La_k\La_{-k}v_\emptyset$. Now, $\La_{-k}=\sum \pm v_\chi$, where the sum is over all hooks $\chi$ of size $k$. Since there are exactly $k$ such hooks and $\La_k$ `undoes' the hooks, the result follows.
\end{proof}

It follows from the lemma that the fermionic Fock space $\F$ can actually be considered as a representation of the {\em Heisenberg algebra}:
$$
H=\bigoplus_{k\in \Z}\C\cdot a_k\oplus \C\cdot z
$$
with commutation relations
$$
[z,a_k]=0,\quad[a_n,a_k]=n\de_{n,-k}z.
$$

%Note that the fermionic Fock space $\F$ is a representation of $H$. 
There is another important representation of $H$, which is called the {\em bosonic Fock space} $\B$. As a vector space, 
$$
\B=\C[x_1,x_2,\dots],
$$ 
the space of polynomials in infinitely many variables $x_1, x_2,\dots$. 
Given $\al,\zeta\in\C$, define the representation $\B(\al,\zeta)$ of $H$ on $\B$:
\begin{align*}
a_n&=\partial/\partial x_n\qquad(n\in\Z_{>0});
\\
a_{-n}&=\zeta n x_n\qquad(n\in\Z_{>0});
\\
a_{0}&=\al\id;
\\
z&=\zeta\id.
\end{align*}

\begin{Lemma} %\label{}%{\rm \cite{}}%{\bf ()}
If $\zeta\neq 0$, then the representation $\B(\al,\zeta)$ is irreducible.
\end{Lemma}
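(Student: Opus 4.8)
The plan is to show that every nonzero $H$-submodule $W\subseteq\B$ is all of $\B$. The representation has a distinguished \emph{vacuum} vector, the constant polynomial $1$: it is killed by all lowering operators $a_n=\partial/\partial x_n$ ($n>0$), and every monomial is built from it by the raising operators $a_{-n}=\zeta n x_n$. So the argument splits into two steps: first produce $1$ inside $W$ by lowering, then recover all of $\B$ from $1$ by raising. The key feature of the hypothesis $\zeta\neq 0$ is that it makes the raising operators nonzero.

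First I would show $1\in W$. Pick any nonzero $p\in W$ and order monomials of $\B$ by total degree. Let $\prod_n x_n^{m_n}$ be a monomial of maximal total degree occurring in $p$, and consider $D:=\prod_n (\partial/\partial x_n)^{m_n}$; since the operators $a_n$ ($n>0$) commute and $W$ is stable under each of them, $D$ is a well-defined composition of $H$-actions and preserves $W$. Applied to a monomial $\prod_n x_n^{a_n}$, the operator $D$ gives zero unless $a_n\geq m_n$ for every $n$, and a nonzero constant precisely when $a_n=m_n$ for all $n$. Because $\sum_n m_n$ is the maximal total degree in $p$, every other monomial of $p$ is annihilated by $D$: if it had $a_n\geq m_n$ for all $n$ its degree would be at least $\sum_n m_n$, forcing equality and hence agreement with the chosen monomial. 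Thus $Dp$ equals the nonzero scalar $\bigl(\prod_n m_n!\bigr)c$, where $c$ is the coefficient of the chosen monomial, so $1\in W$.

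Second, since $\zeta\neq 0$ the raising operators act by multiplication, $a_{-n}\cdot 1=\zeta n x_n$, and more generally $a_{-n_1}\cdots a_{-n_k}\cdot 1=\zeta^k(n_1\cdots n_k)\,x_{n_1}\cdots x_{n_k}$ because these commuting operators are just multiplications by the $\zeta n x_n$. As $\zeta\neq 0$, every monomial of $\B$ is therefore a nonzero scalar multiple of an element of $W$, so $W$ contains a spanning set of $\B$ and $W=\B$. Note that the scalar $\al$ governing $a_0$ plays no role here, since $a_0$ acts as a scalar and cannot distinguish submodules.

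The only real obstacle is the lowering step: one must be sure that differentiation reduces an arbitrary nonzero polynomial to a nonzero constant without accidental cancellation, and this is exactly what the maximal-degree bookkeeping above guarantees. Once $1\in W$ is secured, the raising step and the reduction of irreducibility to ``$W$ contains the vacuum and the vacuum generates $\B$'' are immediate consequences of $\zeta\neq 0$.
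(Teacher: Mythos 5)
Your proof is correct and follows exactly the paper's own two-step argument: reduce any nonzero element of a submodule to a nonzero multiple of $1$ by applying the $a_n$ with $n>0$, then recover every monomial from $1$ using the $a_{-n}=\zeta n x_n$ with $\zeta\neq 0$. The maximal-degree bookkeeping you supply is a careful filling-in of the cancellation issue the paper leaves implicit.
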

\begin{proof}
Any polynomial in $\B$ can be reduced to a multiple of $1$ by successive application of the $a_n$ with $n > 0$. Then successive
application of the $a_n$ with $n < 0$ can give any monomial in $\B$.
\end{proof}

The constant polynomial $v:=1$ is a {\em highest weight vector of weight $(\al,\zeta)$} in $\B$, which means
$$
a_0v=\al v, \quad zv=\zeta v,\quad a_nv=0 \qquad(n\in\Z_{>0}).
$$
Note that highest weight vector $v$ of weight $(\al,\zeta)$ spans a $1$-dimensional $H_+$-submodule $\C_{\al,\zeta}$, where 
$$H_+:=\spa(z,a_n\mid n\ge 0).$$ 
In view of the PBW Theorem, it is then clear that 
$$
\B(\al,\zeta)=\ind^{U(H)}_{U(H_+)}\C_{\al,\zeta}.
$$

\begin{Lemma} %\label{}%{\rm \cite{}}%{\bf ()}
Let $V$ be a representation of $H$, which admits a nonzero highest weight vector $v$ of weight $(\al,\zeta)$ with $\zeta\neq 0$. Then there is a unique $H$-module homomorphism $\phi$ from $B(\al,\zeta)$ to $V$ such that $\phi(1)=v$. This homomorphism is injective, and the vectors of the form $a_{-1}^{k_1}\dots a_{-n}^{k_n}v$ are linearly independent. If these vectors span $V$, then $V$ is isomorphic to $\B(\al,\zeta)$. In particular, this is the case
if V is irreducible.
\end{Lemma}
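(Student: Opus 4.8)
The plan is to realize $\phi$ as the map produced by the universal property of the induced module $\B(\al,\zeta)=\ind^{U(H)}_{U(H_+)}\C_{\al,\zeta}$, and then to read off injectivity, linear independence, and the isomorphism statements from the irreducibility of $\B(\al,\zeta)$ already established above (for $\zeta\neq 0$).

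First I would observe that the line $\C v$ is an $H_+$-submodule of $\res_{H_+}V$ isomorphic to $\C_{\al,\zeta}$: indeed $zv=\zeta v$, $a_0v=\al v$, and $a_nv=0$ for $n>0$, which are exactly the defining relations of $\C_{\al,\zeta}$. Hence $1\mapsto v$ is a homomorphism $\C_{\al,\zeta}\to\res_{H_+}V$ of $H_+$-modules, and by the adjunction $\Hom_H(\ind^{U(H)}_{U(H_+)}\C_{\al,\zeta},V)\cong\Hom_{H_+}(\C_{\al,\zeta},\res_{H_+}V)$ (Frobenius reciprocity, i.e. the universal property of induction) it corresponds to a unique $H$-homomorphism $\phi\colon\B(\al,\zeta)\to V$ with $\phi(1)=v$. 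Uniqueness is clear directly as well, since $1$ generates $\B(\al,\zeta)=U(H)\cdot 1$ as an $H$-module, so any homomorphism out of it is determined by the image of $1$.

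Next I would pin down a basis of $\B(\al,\zeta)$. Writing $H=H_-\oplus H_+$ with $H_-=\spa(a_{-n}\mid n>0)$, I note that $H_-$ is abelian since $[a_{-n},a_{-m}]=0$ for $n,m>0$, so $U(H_-)=\C[a_{-1},a_{-2},\dots]$. By the PBW theorem $U(H)\cong U(H_-)\otimes U(H_+)$ as vector spaces, whence $\B(\al,\zeta)=U(H)\otimes_{U(H_+)}\C_{\al,\zeta}\cong U(H_-)\cdot 1$, and the monomials $a_{-1}^{k_1}\cdots a_{-n}^{k_n}\cdot 1$ form a basis. Since $\zeta\neq 0$, the earlier lemma tells us $\B(\al,\zeta)$ is irreducible; as $\phi(1)=v\neq 0$, the kernel of $\phi$ is a proper submodule and hence $0$, so $\phi$ is injective. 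Because $\phi$ sends the PBW basis vectors $a_{-1}^{k_1}\cdots a_{-n}^{k_n}\cdot 1$ to the vectors $a_{-1}^{k_1}\cdots a_{-n}^{k_n}v$ and is injective, these images are linearly independent in $V$.

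Finally, the image $\phi(\B(\al,\zeta))$ equals the span of the vectors $a_{-1}^{k_1}\cdots a_{-n}^{k_n}v$, and it is an $H$-submodule of $V$. If these vectors span $V$, then $\phi$ is onto and hence an isomorphism. If instead $V$ is irreducible, then this nonzero submodule (it contains $v$) must be all of $V$, so the vectors automatically span and $\phi$ is again an isomorphism. The argument contains no serious obstacle once irreducibility of $\B(\al,\zeta)$ is invoked; the only point requiring care is the PBW bookkeeping, namely confirming that $H_-$ is abelian and that the triangular decomposition $U(H)=U(H_-)U(H_+)$ really yields the monomial basis of the induced module. Should one wish to avoid citing irreducibility of $\B(\al,\zeta)$, one can instead prove linear independence by hand: using $a_mv=0$ and $[a_m,a_{-m}]=m\zeta$ one computes $a_m\,(a_{-1}^{k_1}\cdots a_{-n}^{k_n}v)=k_m\,m\zeta\,(a_{-1}^{k_1}\cdots a_{-m}^{k_m-1}\cdots a_{-n}^{k_n}v)$, and then isolates the coefficients in a putative relation by applying suitable products of positive modes, the relevant scalars being nonzero precisely because $\zeta\neq 0$.
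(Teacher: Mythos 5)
Your proposal is correct and follows essentially the same route as the paper: define $\phi$ via Frobenius reciprocity for the induced module $\B(\al,\zeta)=\ind^{U(H)}_{U(H_+)}\C_{\al,\zeta}$, and deduce injectivity (hence linear independence of the monomial images) from the already-established irreducibility of $\B(\al,\zeta)$ for $\zeta\neq 0$. You simply spell out the PBW bookkeeping and the spanning/irreducibility endgame that the paper leaves implicit.
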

\begin{proof}
By the adjointness of tensor and Hom (Frobenius reciprocity), we have an $H$-module homomorphism $\phi$ from $B(\al,\zeta)$ to $V$ defined by 
$$
\phi (f(\dots,x_n,\dots))=f(\dots,\frac{1}{\zeta n}a_{-n},\dots)v.
$$
Since $B(\al,\zeta)$ is irreducible, we have $\ker \phi= 0$.
\end{proof}

Define a grading on $\B$ by setting $$\deg(x_k)=k.$$ Then the dimension of the $j$th graded component $\B_j$ is equal to the number of partitions of $j$, which we denote by $p(j)$:
$$
\dim \B_j=p(j).
$$ 

Let us now return to the fermionic Fock space $\F$. As a module over $H$, it has a highest weight vector $v_\emptyset$ of weight $(0,1)$. 

\begin{Theorem} %\label{}%{\rm \cite{}}%
{\bf (Boson-Fermion Correspondence)}
There is a unique isomorphism 
$$\si:\F\to \B=\B(1,0)$$
of $H$-modules, which maps $v_\emptyset\mapsto 1$. In particular, $\F$ is irreducible as an $H$-module. 
\end{Theorem}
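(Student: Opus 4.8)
The plan is to feed the $H$-module $\F$ into the preceding lemma on highest weight vectors and then upgrade the resulting injection to an isomorphism by a graded dimension count.

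First I would record that $v_\emptyset\in\F$ is a highest weight vector of weight $(0,1)$, so that $\zeta=1\neq 0$, and apply the highest-weight lemma with $V=\F$. (Here the target is $\B(0,1)$, the representation whose highest weight matches that of $v_\emptyset$.) This yields a \emph{unique} $H$-module homomorphism $\phi\colon \B(0,1)\to\F$ with $\phi(1)=v_\emptyset$, and the lemma moreover guarantees that $\phi$ is injective. Thus it only remains to prove that $\phi$ is surjective; the desired map is then $\si=\phi^{-1}$, and its uniqueness follows from that of $\phi$.

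To prove surjectivity I would compare gradings. On the bosonic side $\deg x_k=k$ gives $\dim\B_j=p(j)$, as already noted. On the fermionic side I would grade $\F$ by declaring $\deg v_\la=|\la|$; since $\{v_\la\}$ is a basis and there are exactly $p(j)$ partitions of $j$, we get $\dim\F_j=p(j)$ as well. The key point is that $\phi$ is homogeneous of degree $0$: the operator $a_{-n}$ raises degree by $n$ on $\B$ (it is a scalar multiple of multiplication by $x_n$) and also on $\F$, because $a_{-n}=\La_{-n}$ adds a skew hook of size $n$ to $\la$ and hence raises $|\la|$ by $n$ (cf.\ the proof of Lemma~\ref{LLaComm}). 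Since $\phi$ sends the monomial $x_1^{k_1}\cdots x_N^{k_N}$ of degree $\sum_m m k_m$ to a scalar multiple of $a_{-1}^{k_1}\cdots a_{-N}^{k_N}v_\emptyset$, it carries $\B_j$ into $\F_j$.

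Finally, for each $j$ the restriction $\phi_j\colon\B_j\to\F_j$ is an injective linear map between finite dimensional spaces of the \emph{same} dimension $p(j)$, hence an isomorphism; summing over $j$ shows $\phi$ is an isomorphism. Setting $\si=\phi^{-1}$ completes the construction, and the irreducibility of $\F$ is immediate from that of $\B(0,1)$ (valid since $\zeta=1\neq 0$). The only substantive step is the grading comparison in the previous paragraph---verifying both that $a_{-n}$ acts with degree $+n$ on $\F$ and that $\dim\F_j=p(j)$; once the two gradings are matched, the injection-between-equal-dimensions argument closes the proof with no further computation.
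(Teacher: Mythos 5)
Your proposal is correct and follows exactly the paper's argument: invoke the preceding highest-weight lemma to get a unique injective $H$-homomorphism $\phi\colon\B\to\F$ sending $1\mapsto v_\emptyset$, observe that $\phi$ is homogeneous, and conclude by comparing the graded dimensions $\dim\B_j=p(j)=\dim\F_j$. You supply slightly more detail than the paper (justifying homogeneity via the degree of $a_{-n}$ and correctly reading the weight as $(0,1)$, where the theorem's label $\B(1,0)$ is evidently a typo), but the route is the same.
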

\begin{proof}
By the previous lemma, we have a unique homomorphism $\phi:\B\to \F$ which maps $1$ to $v$, and $\phi$ is necessarily injective. Note also that $\phi$ is homogeneous with respect to the gradings of $\B$ and $\F$. By comparing the dimensions of the graded components of $\B$ and $\F$, we conclude that $\phi$ is an isomorphism. 
\end{proof}

\section{Schur polynomials}\label{S3.2}
 We want to determine the polynomials in $\B$
% $$S_\la:=\si(v_\la)\in\B\qquad(\la\in\mathcal{P}),$$
 which correspond to the natural basis elements $v_\la$ of $\F$ under the boson-fermion correspondence. 
 
The {\em elementary Schur polynomials} $S_k (x)\in \B$ are defined by the generating function
\begin{equation}\label{DSchur}
\sum_{k\in\Z}S_k(x) z^k = \exp\sum_{n\in\Z_{>0}} x_n z^n.
\end{equation}
An easy exercise with multinomial coefficients shows that
$$
S_k(x)=
\left\{
\begin{array}{ll}
0 &\hbox{if $k<0$};\\
1 &\hbox{if $k=0$};\\
\displaystyle\sum_{k_1+2k_2+\dots=k} \frac{x_1^{k_1}}{k_1!} \frac{x_2^{k_2}}{k_2!}\dots
&\hbox{if $k>0$}.
\end{array}
\right.
$$
For example,
\begin{align*}
S_1(x)&= x_1,
\\
 S_2(x) &= \frac{x_1^2}{2}+x_2,
 \\
S_3(x) &= \frac{x_1^3}{6} + x_1x_2 + x_3 ,
\\
S_4(x) &= \frac{x_1^4}{24} + \frac{x_2^2}{2} + \frac{x_1^2x_2}{2}+ x_1x_3 + x_4.
\end{align*}

Now, to each $\la=(\la_1,\dots,\la_n)\in\Par$, we define
\begin{equation}\label{DSLa}
\begin{split}
S_\la(x):=&\det(S_{\la_i+j-i}(x))_{1\leq i,j\leq n}
\\
=&
\left|
\begin{matrix}
S_{\la_1} & S_{\la_1+1} & S_{\la_1+2} & \dots & S_{\la_1+n-1} \\
S_{\la_2-1} & S_{\la_2} & S_{\la_2+1} & \dots & S_{\la_2+n-2} \\
\vdots          & \vdots      &          \vdots & \vdots & \vdots            \\
S_{\la_n+1-n} & S_{\la_n+2-n} & S_{\la_n+3-n} & \dots & S_{\la_n}
\end{matrix}
\right|.
\end{split}
\end{equation}

For example
\begin{align*}
S_{(1,1)}(x)&=
\left|
\begin{matrix}
S_1 & S_2  \\
S_0 & S_1
\end{matrix}
\right|
=
\left|
\begin{matrix}
S_1 & S_2  \\
1 & S_1
\end{matrix}
\right|
=S_1^2-S_2=\frac{x_1^2}{2}-x_2,
\\
S_{(2,1)}(x)&=
\left|
\begin{matrix}
S_2 & S_3  \\
S_0 & S_1
\end{matrix}
\right|
=
S_2S_1-S_3=\frac{x_1^3}{3}-x_3,
\\
S_{(2,2)}(x)&=
\left|
\begin{matrix}
S_2 & S_3  \\
S_1 & S_2
\end{matrix}
\right|
=S_2^2-S_1S_3=\frac{x_1^4}{12}-x_1x_3+x_2.
\end{align*}

It is easy to see that with respect to our grading on $\B$, we have
$$
\deg(S_\la(x))=|\la|.
$$

\begin{Remark} \label{RSF}%{\rm \cite{}}%{\bf ()}
{\rm 
For those who know something about the ring of symmetric functions, here is what is going on: we identify $\B$ with the ring $\La$ of symmetric functions %in the variables $y_1,y_2,\dots$ 
so that $x_k$ corresponds to the $k$th power sum symmetric function 
%$p_k:=y_1^k+y_2^k+\dots$ 
divided by $k$:
$$
\B\leftrightarrow \La,\ x_k\leftrightarrow \frac{p_k}{k}\qquad(k\in\Z_{\geq 0}).
$$
Then 
$$
\sum_{k\in\Z}S_k z^k = \exp\sum_{k\in\Z_{>0}} \frac{p_k}{k} z^k.
$$
This is a well-known expression which defines the elementary symmetric functions $h_k$, see \cite[proof of (2.14)]{M}, so we identify $S_k\leftrightarrow h_k$. Then the Schur polynomial corresponds to the corresponding Schur's symmetric function: $S_\la\leftrightarrow s_\la$, thanks to the Jacobi-Trudi formula, see \cite[(3.4)]{M}. 
}
\end{Remark}

Recall the boson-fermion correspondence $\si:\F\iso \B$. 

\begin{Theorem} %\label{}%{\rm \cite{}}%{\bf ()}
For all $\la\in\Par$, we have 
$$
\si(v_\la)=S_\la.
$$
\end{Theorem}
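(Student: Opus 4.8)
The plan is to exploit that \(\sigma\) is an isomorphism of \(H\)-modules sending \(v_\emptyset\mapsto 1\), so it commutes with every operator assembled from the \(\La_k=a_k\). I will feed it operators built so that the fermionic side produces \(v_\la\) while the bosonic side produces \(S_\la\).

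\textbf{Warm-up (single rows).} Introduce the half--vertex operator \(\Gamma_-(z):=\exp\big(\sum_{n\ge 1}\tfrac{z^n}{n}\La_{-n}\big)\), which is a well-defined power series in \(z\) because the \(\La_{-n}\ (n>0)\) pairwise commute by Lemma~\ref{LLaComm}. On \(\B\), since \(\sigma\) carries \(a_{-n}\) to multiplication by \(nx_n\), we get \(\Gamma_-(z)\cdot 1=\exp(\sum_n x_nz^n)=\sum_k S_k z^k\) directly from the generating function~(\ref{DSchur}). On \(\F\), the degree-\(k\) part of \(\Gamma_-(z)v_\emptyset\) is \(\sum_{\rho\vdash k}z_\rho^{-1}\La_{-\rho}v_\emptyset\), and by the Murnaghan--Nakayama identity \(\chi^\mu(c_\rho)=(v_\mu,\La_{-\rho_1}\cdots\La_{-\rho_l}v_\emptyset)\) recorded in the Remark closing Day Two, the \(v_\mu\)-coefficient of \(\La_{-\rho}v_\emptyset\) is \(\chi^\mu(c_\rho)\). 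Hence that coefficient equals \(\sum_{\rho\vdash k}z_\rho^{-1}\chi^\mu(c_\rho)=(\chi^\mu,\triv)=\delta_{\mu,(k)}\) by the orthogonality relations (Theorem~\ref{TOrtRel}) and \(V^{(k)}=\triv\) (Example~\ref{E050307}). Thus \(\Gamma_-(z)v_\emptyset=\sum_k v_{(k)}z^k\), and intertwining gives \(\sigma(v_{(k)})=S_k=S_{(k)}\).

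\textbf{General \(\la\).} I would realize \(v_\la\) through the fermionic creation operators of the semi-infinite wedge: writing \(\psi_m\) for \(v_m\wedge(-)\) and \(\psi(z)=\sum_m\psi_m z^m\), one has \(v_\la=\psi_{\la_1}\psi_{\la_2-1}\cdots\psi_{\la_\ell-\ell+1}\) applied to the charge-\((-\ell)\) vacuum \(v_{-\ell}\wedge v_{-\ell-1}\wedge\cdots\). The operator form of the boson--fermion correspondence upgrades the Heisenberg isomorphism to \(\sigma\psi(z)\sigma^{-1}=\mathcal{R}\,z^{\La_0}\Gamma_-(z)\Gamma_+(z)^{-1}\), with \(\Gamma_+(z)=\exp(\sum_{n\ge1}\tfrac{z^{-n}}{n}\La_n)\) and \(\mathcal{R}\) a charge-shift. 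I would then normal-order the product of these operators: the Heisenberg relation \([\La_n,\La_{-n}]=n\) (Lemma~\ref{LLaComm}) with Baker--Campbell--Hausdorff gives \(\Gamma_+(z_i)^{-1}\Gamma_-(z_j)=(1-z_j/z_i)\,\Gamma_-(z_j)\Gamma_+(z_i)^{-1}\), so moving every \(\Gamma_+^{-1}\) to the right (where it fixes the vacuum) leaves \(\prod_{i<j}(1-z_j/z_i)\prod_i\Gamma_-(z_i)\cdot 1=\prod_{i<j}(1-z_j/z_i)\prod_i\big(\sum_k S_k z_i^k\big)\). Extracting the coefficient selecting \(\psi_{\la_i-i+1}\) from each \(\psi(z_i)\), and recognizing \(\prod_{i<j}(1-z_j/z_i)\) as a Vandermonde, converts the extraction into a bialternant, which is exactly the Jacobi--Trudi determinant \(\det(S_{\la_i+j-i})=S_\la\) of~(\ref{DSLa}).

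\textbf{Main obstacle.} The delicate part is precisely the fermionic input the excerpt has not developed: the Clifford algebra of the \(\psi_m,\psi_m^*\), the charged Fock space, the identity \(\sigma\psi(z)\sigma^{-1}=\mathcal{R}\,z^{\La_0}\Gamma_-(z)\Gamma_+(z)^{-1}\), and the attendant charge/shift and sign bookkeeping in the coefficient extraction. A fermion-free alternative is an induction on the number of rows via the Bernstein operator \(B(z):=\Gamma_-(z)\Gamma_+(z)^{-1}\), which preserves \(\F\): one shows on \(\F\) that \(B_m v_\mu=v_{(m,\mu)}\) whenever \(m\ge\mu_1\) (again reducible to the skew-hook action of the \(\La_k\)), concludes \(v_\la=B_{\la_1}\cdots B_{\la_\ell}v_\emptyset\) and hence \(\sigma(v_\la)=B_{\la_1}\cdots B_{\la_\ell}\cdot 1\) by intertwining, and finishes with the same bosonic normal-ordering computation, now showing \(B_{\la_1}\cdots B_{\la_\ell}\cdot 1=\det(S_{\la_i+j-i})\). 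Either way the crux is the determinant-producing normal-ordering together with an honest, noncircular proof of the row-adding property.
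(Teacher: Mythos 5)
Your strategy is genuinely different from the one in the text, and the part you have actually carried out --- the single-row warm-up --- is correct: expanding $\Gamma_-(z)v_\emptyset$ by cycle types, invoking the identity $\chi^\mu(c_\rho)=(v_\mu,\La_{-\rho_1}\cdots\La_{-\rho_l}v_\emptyset)$ from the Remark at the end of Day Two, and using orthogonality of characters (Theorem~\ref{TOrtRel}) does give $\si(v_{(k)})=S_k$. But for general $\la$ your argument is a plan rather than a proof, and you say so yourself: everything hinges either on the operator identity $\si\psi(z)\si^{-1}=\mathcal{R}\,z^{\La_0}\Gamma_-(z)\Gamma_+(z)^{-1}$ (which requires the Clifford algebra, the charged Fock space, and careful sign and charge bookkeeping, none of which has been set up here), or on the row-adding property $B_mv_\mu=v_{(m,\mu)}$ of the Bernstein operators, which you have not established. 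These are exactly the points where the real work lies, so as written the general case has a genuine gap.

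The proof in the text sidesteps all of this by running the comparison in the opposite direction: instead of building $v_\la$ from the vacuum with creation operators, it applies the single operator $\exp(\sum_{j\geq 1} y_j\La_j)$ to $v_\la$ and computes the coefficient of the highest weight vector on both sides of the correspondence. On the bosonic side this coefficient is $P(y)$, where $P=\si(v_\la)$, by Taylor's formula, since $a_j$ acts as $\partial/\partial x_j$. On the fermionic side, $\exp(\sum_{j\geq 1} y_j\La_j)$ is literally the upper unitriangular $\Z\times\Z$ matrix with $S_k(y)$ on the $k$th diagonal (because $\La_1^j=\La_j$ as matrices, combined with (\ref{DSchur})), so the coefficient of $v_\emptyset$ in its action on $v_\la=v_{\la_1}\wedge v_{\la_2-1}\wedge\cdots$ is a minor of that matrix, namely $\det(S_{\la_i+j-i}(y))=S_\la(y)$ by (\ref{DSLa}). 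No normal ordering, no charge, no Vandermonde: the Jacobi--Trudi determinant appears directly as a minor of an explicit matrix. If you want to salvage your route, the Bernstein-operator induction is the more economical of your two options, but you would still have to prove $B_mv_\mu=v_{(m,\mu)}$ from the skew-hook action of the $\La_{-k}$, which is comparable in effort to the whole of the text's argument.
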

\begin{proof}
Fix a partition $\la$. 
Under the boson-fermion correspondence, we have  
\begin{equation}\label{E6.10}
\si\big(\exp(y_1\La_1+y_2\La_2+\dots)(v_\la)\big)=
\exp(y_1a_1+y_2a_2+\dots)\si(v_\la). 
\end{equation}
We want to compare the ``$y$-coefficients'' of the highest weight vector in the right and in the left. Let $\si(v_\la)=:P(x)\in\B$. 

In the right hand side of (\ref{E6.10}), we have a bosonic picture, and so the elements $a_k$ for $k>0$ are represented by the operators $\partial/\partial x_k$: 
$$
\exp(y_1a_1+y_2a_2+\dots)=\exp \sum_{j\geq 1}y_j\frac{\partial}{\partial x_j}.
$$
Denote by $F(y)$ the coefficient of $1$ when this operator is applied to $P(x)$. Then, using multivariable Taylor series decomposition, we get 
$$
F(y)=\big(\exp \sum_{j\geq 1}y_j\frac{\partial}{\partial x_j}\big)P(x)|_{x=0}=%P(x+y)|_{x=0}=
P(y). 
$$

Let us turn to the left hand side of (\ref{E6.10}), which is a fermionic picture. We note that that %for a positive $k$ 
the element $\La_k$ can be thought of as the $\Z\times \Z$ matrix with $1$'s on the $k$th diagonal and $0$'s elsewhere, in other words
$$
\La_k=\sum_{n\in\Z}E_{n,n+k}.
$$ 
Note that the product of matrices $\La_k\La_m$ makes sense, and 
$$
\La_k\La_m=\La_k+\La_m\qquad(k,m\in\Z).
$$
In particular, $\La_1^j=\La_j$ for $j$ positive. (This is {\em not} an  equality of operators on $\F$, but rather equality of matrices in a ring of infinite matrices with certain finiteness conditions, for example we might require that all matrices are upper triangular.) 

We can now consider the matrix $\exp(y_1\La_1+y_2\La_2+\dots)$ as an element of the group $U(\C[y_1,y_2,\dots])$ of upper unitriangular $\Z\times \Z$ matrices with coefficients polynomials in $y_1,y_2,\dots$. Moreover,
$$\exp(\sum_{j\geq 1}y_j\La_j)=\exp(\sum_{j\geq 1}y_j\La_1^j)=\sum_{k\geq 0}S_k(y)\La_k,
$$
where we have used the definition (\ref{DSchur}) of the  Schur polynomials $S_k$ for the last equality. In other words, $\exp(\sum_{j\geq 1}y_j\La_j)\in U(\C[y_1,y_2,\dots])$ is the matrix, where all entries on the $k$th diagonal are equal to  $S_k(y)$. 

We can `integrate' the fermionic Fock space $\F$ to make it also a module over the upper unitraiangular group $U(\C)$, so that the action of the matrix $g\in U(\C)$ is defined as usual:
$$
g(v_{i_0}\wedge v_{i_{-1}}\wedge\dots)=g(v_{i_0})\wedge g( v_{i_{-1}})\wedge\dots.
$$

Now, by linear algebra, if $A\in U(\C)$, then the $v_{j_0}\wedge v_{j_{-1}}\wedge\dots$-coefficient of $A(v_{i_0})\wedge A( v_{i_{-1}})\wedge\dots
$
is equal to the minor
$$
\det A(j_0,j_{-1},\dots;i_0,i_{-1},\dots),
$$
where $A(j_0,j_{-1},\dots;i_0,i_{-1},\dots)$ is the submatrix of $A$ obtained by taking the entries that lie in the rows $j_0,j_{-1},\dots$ and the columns $i_0,i_{-1},\dots$. 

The highest weight vector $v_\emptyset=v_0\wedge v_{-1}\wedge\dots$. So the $v_\emptyset$-coefficient of 
$A(v_{i_0}\wedge v_{i_{-1}}\wedge\dots)$ is the minor
$$
\det A(0,{-1},{-2},\dots;i_0,i_{-1},i_{-2},\dots).
$$
For the matrix $A=\exp(\sum_{j\geq 1}y_j\La_j)$ with all entries on the $k$th diagonal being equal to $S_k(y)$, this boils down to
the determinant of the $\Z_{\geq 0}\times\Z_{\geq 0}$ matrix with $(r,s)$ entry equal to $S_{i_r+s}(y)=S_{i_r+r+s-r}(y)$. Recall that $v_\la=v_{\la_1}\wedge v_{\la_2-1}\wedge\dots$. Comparing to (\ref{DSLa}), the $v_\emptyset$-coefficient of $v_\la$ is $S_\la(y)$. The theorem follows since $\si(v_\emptyset)=1$. 
\end{proof}

Now, recall that we have a non-degenerate symmetric contravariant form $(\cdot,\cdot)$ on $\F$ with respect to which the basis $\{v_\la\mid\la\in\Par\}$ is orthonormal. Let $\omega$ be a linear operator on $H$ defined as follows: 
$$
\omega: H\to H,\ a_n\mapsto a_{-n},\ z\mapsto z.
$$
Note that $\omega$ is an involute antiautomorphism of the Lie algebra $H$. Then using the fact that $\F$ is irreducible as an $H$-module, we see that $(\cdot,\cdot)$ is uniquely characterized as a bilinear form on $\F$ such that
$$
(1,1) =1\ \text{and}\ (hv,w)=(v,\omega(h)w)\ \text{for all $h\in H$ and $v,w\in \F$}.
$$
Using boson-fermion correspondence, we may transfer $(\cdot,\cdot)$ to a form on $\B$, which we again denote $(\cdot,\cdot)$. Then the Schur polynomials form an orthonormal basis of $\B$ with respect to the contravariant form $(\cdot,\cdot)$.

We have observed in the end of Section~\ref{S2.2}, using the Murnaghan-Nakayama Rule, that the character value $\chi^\la(c_\rho)$ for a partition $\rho=(\rho_1,\dots,\rho_l)$ can be found as follows:
$$
\chi^\la(c_\rho)=(v_\la,\La_{-\rho_1}\dots\La_{-\rho_l}v_\emptyset).%=(\La_{\rho_1}\dots\La_{\rho_l}v_\la,v_\emptyset).
$$
We can now transfer this to bosons as follows:
\begin{equation}\label{EChOrth}
\chi^\la(c_\rho)=(S_\la,a_{-\rho_1}\dots a_{-\rho_l}1)=(S_\la,\rho_1x_{\rho_1}\dots \rho_lx_{\rho_l})
\end{equation}
Denote $P_j=jx_j$, and $P_\rho:=P_{\rho_1}\dots P_{\rho_l}$. Then the above expression becomes
$$
\chi^\la(c_\rho)=(S_\la,P_{\rho}).
$$
Hence the character values are the change of basis matrix defined from
\begin{equation}\label{EFrB}
P_\rho=\sum_\la \chi^\la(c_\rho) S_\la.
\end{equation}
If we push this back to the ring of symmetric functions using Remark~\ref{RSF}, we get the famous Frobenius formula:
$$
p_\rho=\sum_\la \chi^\la(c_\rho) s_\la.
$$
This was historically the first description of the characters of the symmetric group (obtained in 1899 by Frobenius), see \cite{C}. 
It is not difficult to invert (\ref{EFrB}). Setting 
$$x^\la:=x_{\la_1}\dots x_{\la_l}=x_1^{l_1}x_2^{l_2}\dots$$ for a partition 
$$\la=(\la_1,\dots,\la_l)=(1^{l_1},2^{l_2},\dots),$$ 
we note that 
$$\{x_\la\mid\la\in\Par\}$$ is an orthogonal basis. In fact, we have
$$
(x_\la,x_\mu)=\de_{\la,\mu}l_1!l_2!\dots.
$$
This is proved by induction, see Exercise~\ref{ExXOrth}.
Denote $Z_\la:=l_1!l_2!$. 
Then (\ref{EChOrth}) implies
$$
S_\la=\sum_\rho\frac{1}{Z_\rho}\chi^\la(c_\rho)x^\rho.
$$
So after harmless normalization the coefficients of the polynomial $S_\la$ are simply character values. 

I might handwave some exciting connection to soliton equations here ... details can be found in \cite{KR}.

\chapter{Day Four}

We'll see how much time we have left. At the moment I plan to tell you something fun about Khovanov-Lauda-Rouquier algebras and their relevance for symmetric groups, various Hecke algebras, and other areas.

\chapter{Exercises}

\section{Exercises used in the lectures}

\begin{Exercise} \label{ExRepAlg}%{\rm \cite{}}%
{\bf (Representations vs. Modules)}
{\rm\small 
Let $A$ be an algebra over a field $F$.

(a) A {\em representation} of $A$ means 
a pair $(V, \rho)$ where $V$ is a vector space and 
$\rho:A \rightarrow \End_F(V)$ is an algebra homomorphism.
A morphism $f:(V,\rho) \rightarrow (W,\sigma)$ between two
representations of $A$
means a linear map $f:V \rightarrow W$ such that
$f \circ \rho(a) = \sigma(a) \circ f$ for all $a \in A$.
This defines the category $\Rep{A}$ 
of all representations
of $A$.
Prove that the category $\Rep{A}$ is
{\em isomorphic} to the category $\Mod{A}$.

(b) A {\em matrix representation} of $A$ means
a ring homomorphism $\rho:A \rightarrow M_n(F)$
for some $n \geq 0$. Morphisms of matrix representations
are defined similarly to (i).
This defines the category $\MatRep{A}$ 
of matrix representations of $A$. Prove that the
category $\MatRep{A}$ is {\em equivalent} to the
category of all finite dimensional $A$-modules.
Could we replace the word ``equivalent'' with ``isomorphic'' here?
}
\end{Exercise}

\begin{Exercise} \label{ExMasch}%{\rm \cite{}}%{\bf ()}
{\rm\small 
{\bf (Maschke's Theorem)}
Let $G$ be a finite group and $F$ be a field of characteristic $p\geq 0$. Then every $FG$-module is semisimple if and only if $p$ does not divide the order of the group $|G|$. 

Prove this in steps as follows:

(i) If $p$ divides $|G|$, consider the $1$-dimensional submodule of the left regular module ${}_{FG} FG$ spanned by the element $x:=\sum_{g\in G} g$, and show that this submodule is not a direct summand of the regular module. %Indeed,  if $C$ is a complement to $Fx$ in ${}_{FG}FG$ and  $\sum_{h\in G} a_hh\in C$, then $x(\sum_{h\in G} a_hh)=(\sum_{h\in G}a_h)x\in C$.)

(ii) Let $p\nd|G|$, and $W\subseteq V$ be left $FG$-modules. We need to show that there is a submodule $X\subseteq V$ with $V=W\oplus X$. 
Let $Y$ be an $F$-subspace of $V$ with $V=W\oplus Y$. The projection $\pi:V\to W$ along $Y$ is a linear transformation. We define a map $\phi:V\to V$ by the formula
$$
\phi(v)=\frac{1}{|G|}\sum_{g\in G} g^{-1}\pi(gv)\qquad(v\in V).
$$
Check that $\phi$ is  an $FG$-module homomorphism. Check that  $\im \phi = W$, and so $V=W\oplus\ker \phi$ as modules.
}
\end{Exercise}

\begin{Exercise} \label{ExHopfAlg}%{\rm \cite{}}%{\bf ()}
{\rm\small {\bf (Inner Tensor Product of Modules and Hopf Algebras)} 
Let $A$ be an associative $F$-algebra with multiplication map $m:A\otimes A\to A$, and let $\iota:F\to A,\ c\mapsto c1_A$. 

(a) If there is a homomorphism of algebras $\De:A\to A\otimes A$ (called {\em comultiplication}) we can define the structure of an $A$-module on $V\otimes W$ via $a x=\De(a)x$ for all $a\in A$ and $x\in V\otimes W$. 

(b) Comultiplication is {\em coassociative} if $(\id_A\otimes\De)\circ \De=(\De\otimes\id_A)\circ \De$. If comultiplication is coassociative then for any $A$-modules $X,U,V$, the isomorphism of vector spaces 
$$
(X\otimes U)\otimes V\iso X\otimes(U\otimes V),\ (x\otimes u)\otimes v\mapsto x\otimes (u\otimes v)
$$
is an isomorphism of $A$-modules. 

(c) An $F$-algebra homomorphism $\eps: A\to F$ defines a structure of an $A$-module on $F$. 
The homomorphism $\eps$
is a {\em counit} if $(\eps\bar\otimes\id_A)\circ \De=\id_A=(\id_A\bar\otimes\eps)\circ \De$, where $\bar\otimes $ means that one should use the natural isomorphisms $F\otimes A\iso A$ and $A\otimes F\iso A$.  
If $\eps$ is a counit then the natural isomorphisms of vector spaces 
$F\otimes V\cong V\cong V\otimes F$ 
are isomorphisms of $A$-modules.

(d) An $F$-algebra antiautomorphism $\si:A\to A$ is an {\em antipode} if 
$$m\circ(\id_A\otimes \si)\circ \De=\iota\circ \eps=m\circ( \si\otimes\id_A)\circ \De.$$
Given an $A$-module $V$, we can use $\si$ to define the structure of an $A$-module on the dual vector space $V^*$ as follows: $(af)(v):=f(\si(a)v)$ for all $a\in A,f\in V^*, v\in V$. Use the assumption that $\si$ is an anti-homomorphism to verify that this makes $V^*$ into an $A$-module. Use the assumption that $\si$ is an antipode to verify that the natural maps
$$V^*\otimes V\to F,\ f\otimes v\mapsto f(v),\qquad V\otimes V^*\to F,\  v\otimes f\mapsto f(v)
$$
are $A$-module homomorphisms.

(e) An associative algebra $A$ with coassociative comultiplication $\De$, counit $\eps$ and antipode $\si$ is called a {\em Hopf algebra}. A Hopf algebra is {\em cocommutative} if $\si\circ \De=\De$ where $\si$ is the linear map $A\otimes A\mapsto A\otimes A, a\otimes b\mapsto b\otimes a$. If $A$ is cocommutative then the natural isomorphism
$
V\otimes W\iso W\otimes V,\ v\otimes w\mapsto w\otimes v
$
is an isomorphism of $A$-modules. 

(f) If $G$ is a group, define linear maps $\De:g\mapsto g\otimes g$, $\eps:g\mapsto 1$, and $\si:g\mapsto g^{-1}$ via their action on the group elements and extending to $FG$. These yield a structure of a cocommutative Hopf algebra on the group algebra $FG$. 
}
\end{Exercise}

\begin{Exercise} \label{ECharDualTensProd}%{\rm \cite{}}%
{\bf (Character of a Dual Module)}
{\rm\small 
$\chi_{V^*} = \overline{\chi_V}$ ({\em Hint:} diagonalize $g\in G$ on $V$, note that diagonal entries are roots of unity, and use that $\eps^{-1}=\bar\eps$ for a root of unity $\eps$ to conclude that $\chi(g^{-1})=\overline{\chi(g)}$ ). 
}
\end{Exercise}

\begin{Exercise} \label{ExScPr}%{\rm \cite{}}%
{\bf (Existence of Invariant Inner Products)}
{\rm\small 
Let $G$ be a finite group, and $V$ be a $\C G$-module. 

(i) Prove that there exists a (non-degenerate) $G$-invariant inner product on $V$. ({\em Hint:} for the proof of existence start with any inner product and use ``averaging over $G$".)

(ii) Prove that if $V$  is irreducible, then the inner product is unique up to scalar. 

(iii) If $(\cdot,\cdot)$ is a $G$-invariant inner product on $V$, then $(e_iv,w)=(v, e_iw)$, where $e_1,\dots,e_r$ are the idempotents defined in (\ref{EIdG}). 

(iv) If $W_1\not\cong W_2$ are two non-isomorphic irreducible submodules of $V$, then $W_1\perp W_2$ with respect to any $G$-invariant inner product on $V$. 
}
\end{Exercise}

\begin{Exercise}\label{ExCanTens}
{\rm \small 
Let $V$ be a free right $R$-module with basis $\{v_i\mid i\in I\}$ and $W$ be a left $R$-module. For each $i\in I$, denote by $v_i\otimes W$ the abelian subgroup of $V\otimes_R W$ 
%generated by 
consisting of all pure tensors of the form $v_i\otimes w$ with $w\in W$. 

(a) For each $i\in I$, there is an isomorphism of abelian groups $$W\to v_i\otimes W,\ w\mapsto v_i\otimes w.$$ % is an isomorphism of abelian groups. 

(b) $V\otimes_R W=\bigoplus_{i\in I} v_i\otimes W$ as abelian groups.
}
\end{Exercise}

\begin{Exercise} \label{ExZeroChar} %{\rm \cite{}}%{\bf ()}
{\rm\small 
If $H$ is a subgroup of a finite group $G$, $g\in G$ is not conjugate to an element of $H$, and $V$ is a $\C G$-module induced from $H$, then the character of $V$ on $g$ is zero
}
\end{Exercise}

\begin{Exercise} \label{ExCan}%{\rm \cite{}}%
{\bf (Uniqueness of isotypic components)}
{\rm\small 
Let $A$ be an algebra and $V$ be an $A$-module. Let $V= L_1\oplus\dots\oplus L_r$ be a module decomposition with $L_1,\dots, L_r$ being pairwise non-isomorphic simple $A$-modules. Assume that $V= L_1'\oplus\dots\oplus L_r'$ is another module decomposition of $V$ such that $L_1'\cong L_1,\dots, L_r'\cong L_r$. Then $L_1'=L_1,\dots, L_r'\cong L_r$. 
}
\end{Exercise}

\begin{Exercise} \label{ExConjCl}%{\rm \cite{}}%{\bf ()}
{\rm\small 
Describe conjugacy classes of $S_n$. Show that the number of these conjugacy classes is equal to the number of partitions of $n$. 
}
\end{Exercise}

\begin{Exercise} %\label{}%{\rm \cite{}}%
{\bf (Olshanskii's Lemma)}
{\rm\small 
Fill in details in the proof of Olshanskii's Lemma. 
}
\end{Exercise}

\begin{Exercise} \label{ExW'}%{\rm \cite{}}%
{\bf (Gelfand-Zetlin Spectrum and Standard Tableaux)}
{\rm\small 
Prove Lemma~\ref{L010703_8}.  
}
\end{Exercise}

\begin{Exercise}%\label{}%{\rm \cite{}}%
{\bf (Length Function on symmetric group)}
{\rm\small 
A simple transposition is a transposition of the form $(m,m+1)$. For $w\in S_n$, define the length of $w$, written $\ell(w)$ to be the minimal number $r$ such that $w$ can be written as a product of $r$ simple transpositions. Then $\ell(w)$ is equal to the number of inversions in the sequence $(w(1),\dots,w(n))$, i.e. 
$$\ell(w)=\{(i,j)\mid 1\leq i<j\leq n\ \text{and}\ w(i)>w(j)\}.$$
}
\end{Exercise}

\begin{Exercise} \label{ExXOrth}%{\rm \cite{}}%
{\bf (Orthogonality of Monomial Basis in Bosonic Fock Space)}
{\rm\small 
Let $(\cdot,\cdot)$ be the contravariant form on the bosonic Fock space $\B$ with respect to the action of the Heisenberg algebra normalized so that $(1,1)=1$. 
For a partition 
$$\la=(1^{l_1},2^{l_2},\dots),$$
set
$x^\la:=x_1^{l_1}x_2^{l_2}\dots$. 
 Use induction to show that $
(x_\la,x_\mu)=\de_{\la,\mu}l_1!l_2!\dots.
$
}
\end{Exercise}

\begin{Exercise} %\label{}%{\rm \cite{}}%
{\bf (Hook Formula and Standard Tableaux)}
{\rm\small 
Compute the dimension of the irreducible representation corresponding to the partition $\la=(5,3,3,1)$ by two methods:

(i) using Hook Formula;

(ii) by writing a program which counts the number of standard $\la$-tableaux. 
}
\end{Exercise}

\begin{Exercise} \label{ExSF1}%{\rm \cite{}}%
{\bf (Basic Symmetric Functions)}
{\rm\small 
Let $\La$ be the ring of symmetric functions in infinitely many variables $x_1,x_2,\dots$; this is the inverse limit of the rings $\La_n$ of the symmetric functions in $n$ variables with respect to the maps $\La_m\to \La_{m-1}$ which put the last variable $x_m$ to zero.  

For a composition $\al=(\al_1,\dots,\al_n)\in\Z_{\geq 0}^n$ define $x^\al:=x_1^{\al_1}\dots x_n^{\al_n}$. The monomials $x^\al$ and $x^\be$ are equivalent if they can be obtained from each other by permuting variables. 

For a partition $\la=(\la_1,\la_2,\dots,\la_l)$, 
{\em monomial symmetric function}
$$
m_\la:=\sum x^\al,
$$
where the sum is over all {\em distinct} monomials $x^\al$ equivalent to $x^\la$. For example, 
$$
m_{(2,1)}=x_1^2x_2+x_1x_2^2+x_1^2x_3+x_3x_1^2+x_2^2x_3+x_3x_2^2+\dots
$$

(i) Prove that $\{m_\la\mid\la\in\Par\}$ is a basis of $\La$.

(ii) For $r\geq 0$, define {\em elementary symmetric functions} $$e_r:=m_{(1^r)}=\sum_{i_1<\dots<i_r}x_{i_1}\dots x_{i_r},$$ 
and set $E(t):=\sum_{r\geq 0}e_rt^{r}$. Prove that $E(t)=\prod_{i\geq 1}(1+x_it).$

(iii) For $r\geq 0$, define {\em complete symmetric functions} $h_r:=\sum_{|\la|=r}m_\la$, and set $H(t):=\sum_{r\geq 0}h_rt^r$. Prove that $H(t)=\prod_{i\geq 1}(1-x_it)^{-1}.$

(iv) For $r\geq 1$, define {\em power sum symmetric functions} $p_r:=m_{(r)}=x_1^r+x_2^r+\dots$, and set $P(t):=\sum_{r\geq 1}p_rt^{r-1}$. Prove that $P(t)=\frac{d}{dt}\log H(t)=H'(t)/H(t).$

(v) Working with $n$ variables $x_1,\dots,x_n$, for $1\leq k\leq n$, define $$e_r^{(k)}:=e_r(x_1,\dots,x_{k-1},x_{k+1},\dots,x_n).$$
%and let $M$ denote the $n\times n$ matrix $M:=\big((-1)^{n-i}e_{n-i}^{(k)}\big)_{1\leq k,i\leq n}$. 
Note that 
$$
E^{(l)}(t):=\sum_{r=0}^{n-1}e_r^{(k)}t^r=\prod_{i\neq k}(1+x_it).
$$
Deduce that $H(t)E^{(k)}(-t)=(1-x_kt)^{-1}$. By comparing $t^a$ coefficient on both sides conclude that
$$
\sum_{j=1}^nh_{a-n+j}\cdot(-1)^{n-j}e_{n-j}^{(k)}=x_k^a\qquad(a\in \Z_{\geq 0}).
$$
Deduce for any composition $\al=(\al_1,\dots,\al_n)$ that $A_\al=H_\al M$, where
$A_\al:=(x_j^{\al_i})_{1\leq i,j\leq n}$, $H_\al:=(h_{\al_i-n+j})_{1\leq i,j\leq n}$, and $M:=\big((-1)^{n-i}e_{n-i}^{(k)}\big)_{1\leq k,i\leq n}$.
}
\end{Exercise}

\begin{Exercise} %\label{}%{\rm \cite{}}%
{\bf (Schur Functions)}
{\rm\small 
Let us first work with a finite number of variables $x_1,\dots,x_n$. For a partition $\mu=(\mu_1,\dots,\mu_n)$ with at most $n$ parts, define the polynomial $a_\mu=\det(x_i^{\mu_j})_{1\leq i,j\leq n}$. 
Consider the special partition $\de:=(n-1,n-2,\dots,1,0)$ and the  polynomial $a_{\la+\de}$. 

(i) Prove that $a_{\la+\de}$ is non-zero and {\em skew-symmetric}, i.e. a permutation of $x_i$ and $x_j$ for $i\neq j$ yields the polynomial equal to the negative of $a_{\la+\de}$. Deduce that $a_{\la+\de}$ is divisible by all $x_i-x_j$ for $i\neq j$ in the polynomial ring. Deduce that $a_{\la+\de}$ is divisible by $\prod_{1\leq i<j\leq j}(x_i-x_j)=a_\de$ in the polynomial ring. Define the polynomial
$$
s_\la=s_\la(x_1,\dots,x_n):=\frac{a_{\la+\de}}{a_\de}.
$$
Prove that $s_\la$ is symmetric. 

(ii) Prove that $s_\la(x_1,\dots,x_m,0)=s_\la(x_1,\dots,x_n)$, and deduce that there is a well-defined Schur's function in infinitely many variables obtained as the inverse limit of the Schur's functions in finitely many elements. 

(iii) {\bf (Jacobi-Trudi Formula)} Let $\la=(\la_1,\dots,\la_n)$ be a partition with at most $n$ non-zero parts. Apply determinants to the both sides of the equation $A_\al=H_\al M$ obtained in Exercise~\ref{ExSF1} for the case where $\al=\la+\de$ and interpret the equality as 
$s_\la=\det(h_{\la_i-i+j})_{1\leq i,j\leq n}$.
}
\end{Exercise}

\section{Other exercises}

\begin{Exercise} %\label{}%{\rm \cite{}}%{\bf ()}
{\rm\small 
For the left regular module ${}_{\C G}\C G$ of the group algebra we have 
%decomposes as a direct sum of irreducible representations as follows:
$
{}_{\C G}\C G\cong L_1^{\oplus n_1}\oplus\dots\oplus L_r^{\oplus n_r}.
$
}
\end{Exercise}

\begin{Exercise} %\label{}%{\rm \cite{}}%{\bf ()}
{\rm\small 
Let $V_n=\C\{1,2,\dots,n\}$ be the natural $n$-dimensional permutation module over $S_n$. Prove that $V_n\cong \ind_{S_{n-1}}^{S_n}\triv_{S_{n-1}}$ and 
$\res^{S_n}_{S_{n-1}}V_n\cong V_{n-1}\oplus\triv_{S_{n-1}}$. 
}
\end{Exercise}

\begin{Exercise} %\label{}%{\rm \cite{}}%{\bf ()}
{\rm\small 
True or false? There exists a finite group $G$ with precisely four inequivalent irreducible representations of dimensions $1,2,3$ and $4$. 
}
\end{Exercise}
\iffalse{
False: $4$ does not divide $30$.
}\fi

\begin{Exercise} %\label{}%{\rm \cite{}}%{\bf ()}
{\rm\small 
A $\C G$-module $V$ is irreducible if and only if $V^*$ is irreducible as a $\C G$-module. 
}
\end{Exercise}

\begin{Exercise} %\label{}%{\rm \cite{}}%{\bf ()}
{\rm\small 
Let $V$ and $W$ be finite dimensional $\C G$-modules. Define a $\C G$-module structure on $\Hom_\C(V,W)$ so that $\Hom_\C(V,W)\cong V^*\otimes W$ as $\C G$-modules and $\Hom_{\C G}(V,W)= \Hom_\C(V,W)^G$, the subspace of $G$-invariants. 
}
\end{Exercise}

\begin{Exercise} %\label{}%{\rm \cite{}}%{\bf ()}
{\rm\small 
 It is known from Feit-Thompson's theorem that a non-abelian simple group has an even order. Use this fact  to prove that no simple group has an irreducible complex representation of  degree $2$. ({\em Hint:} Use $\det: GL_2(\C)\to\C^\times$ to show that the image of the simple group is contained in $SL_2(\C)$. Then think about image of an element of order $2$.)
}
\end{Exercise}
\iffalse{
element of order 2 must go to the scalar matrix -I, which shows that the center of $G$ is non-trivial
}\fi

\begin{Exercise} %\label{}%{\rm \cite{}}%{\bf ()}
{\rm\small 
True or false? A non-abelian group of order $55$ has exactly five one-dimensional complex representations up to isomorphism.
}
\end{Exercise}
\iffalse{
True, as $|G/G'|=5$. 
}\fi

\begin{Exercise} %\label{}%{\rm \cite{}}%{\bf ()}
{\rm\small 
Let $G$ be a finite group such that every irreducible $\C G$-module is one-dimensional. Then $G$ is abelian. 
}
\end{Exercise}
\iffalse{
In regular representation every element acts with a diagonal matrix. 
}\fi

\begin{Exercise} %\label{}%{\rm \cite{}}%
{\rm\small {\bf (Irreducible representations of dihedral groups)}
Let $D_{2n}=\lan a,b\mid a^n=b^2=1,\ bab^{-1}=a^{-1}\ran$ be the dihedral group, $\eps:=e^{2\pi i/n}$, and  set 
$$
B:=
\left(
\begin{matrix}
0 & 1  \\
1 & 0
\end{matrix}
\right), \quad 
A_j:=\left(
\begin{matrix}
\eps^j & 0  \\
0 & \eps^{-j}
\end{matrix}
\right)\qquad(1\leq j<n).
$$

(a) Show that for $1\leq j<n$ there is a matrix representation $\rho_j:D_{2n}\to GL_2(\C)$ such that $\rho(a)=A_j$ and $\rho(b)=B$. 

(b) Use Schur's Lemma to prove that $\rho_1,\dots,\rho_{n-1}$ are irreducible unless $n$ is even and $j=n/2$.

(c) Use Schur's Lemma to prove that the representations $\rho_1,\dots,\rho_{\lfloor (n-1)/2\rfloor}$ are pairwise non-isomorphic. 

(d) If $n=2k$ is even, then  $D_{2n}$ has four non-isomorphic one-dimensional representations, which together with $\rho_1,\dots,\rho_{k-1}$ give a complete and irredundant list of irreducible $\C D_{2n}$-modules up to isomorphism.

(e) If $n=2k+1$ is odd, then  $D_{2n}$ has two non-isomorphic one-dimensional representations, which together with $\rho_1,\dots,\rho_{k}$ give a complete and irredundant list of irreducible $\C D_{2n}$-modules up to isomorphism.
}
\end{Exercise}

\iffalse{
\begin{Exercise} %\label{}%{\rm \cite{}}%{\bf ()}
{\rm\small 
Let $A:=
\left(
\begin{matrix}
0 & -1  \\
1 & 0
\end{matrix}
\right)$ abd $B:=
\left(
\begin{matrix}
1 & 0  \\
0 & -1
\end{matrix}
\right)
$, and $D_8=\lan a,b\mid a^4=b^2=1,\ bab^{-1}=a^{-1}\ran$ be the dihedral group. 
There exists an irreducible matrix representation $\rho:D_8\to GL_2(\C)$ with $\rho(a)=A$ and $\rho(b)=B$. %This representation is irreducible.  
}
\end{Exercise}
}\fi

\begin{Exercise} %\label{}%{\rm \cite{}}%{\bf ()}
{\rm\small 
Let $G$ be a finite group and $H\leq G$ be a subgroup. Define the functor $\coind_H^G:
\mod{\C H}\to\mod{\C G}$ using the $\Hom_H$-functor instead of the $\otimes_{\C H}$ functor.

(a) Let $g_1,\dots,g_m$ be the left coset representatives of $H$ in $G$ and $V$ be a $\C H$-module. Define the map 
$$
\al_V:\Hom_{\C H}(\C G,V)=\coind_H^G V\to\ind_H^G V=\C G\otimes_{\C H} V,\ \phi\mapsto\sum_{i=1}^m g_i\otimes\phi(g_i^{-1}).
$$
Then $\al_V$ is independent of the choice of the left coset representatives. 

(b) $\al_V$ is a isomorphism of $\C G$-modules.

(c) $\al_V$ defines an isomorphism of the functors $\ind_H^G\cong\coind_H^G$. 

(d) $\ind_H^G$ is both left and right adjoint to $\res_H^G$. 

(e) $\ind_H^G$ is exact, i.e. maps short exact sequences to short exact sequences. 
}
\end{Exercise}

\begin{Exercise} %\label{}%{\rm \cite{}}%{\bf ()}
{\rm\small 
(a) The functor $\ind_H^G$ is additive.  

(b) If $K\leq H\leq G$ then $\ind_H^G\circ \ind_K^H\cong\ind_K^G$. 

(c) $\ind_H^G(V^*)\cong(\ind_H^G V)^*$ for any $V\in\mod{\C H}$. 
}
\end{Exercise}

%The following is an extremely useful exercise:

\begin{Exercise} %\label{}%{\rm \cite{}}%
{\rm\small {\bf (Tensor Identity)}
Let $G$ be a finite group, $H\leq G$, $V\in\mod{\C H}$ and $W\in\mod{\C G}$. Then there is a functorial isomorphism of $\C G$-modules
$$
(\ind_H^G V)\otimes W\cong \ind_H^G(V\otimes \res_H^G W).
$$
}
\end{Exercise}

\begin{Exercise} %\label{}%{\rm \cite{}}%{\bf ()}
{\rm\small 
If $G$ acts transitively on a set $X$ with a point stabilizer $H$, then the permutation module $\C X$ is isomorphic to the induced module $\ind_H^G \triv_H$. 
}
\end{Exercise}

\begin{Exercise} %\label{}%{\rm \cite{}}%{\bf ()}
{\rm\small 
Let $G$ be a finite group, $H\leq G$, and $V\subseteq{}_{\C H}\C H$ be a submodule of the regular module for $H$. As $\C H$ is naturally embedded into $\C G$, we can consider $V$ as a subspace of $\C G$. Then the submodule of ${}_{\C G}\C G$ generated by $V$ is isomorphic to $\ind_H^G V$. 
}
\end{Exercise}

\begin{Exercise} %\label{}%{\rm \cite{}}%{\bf ()}
{\rm\small 
Let $G$ be a finite group and $H\leq G$. Then each irreducible irreducible $\C G$-module is a summand of a module induced from an irreducible $\C H$-module. 
}
\end{Exercise}

\begin{Exercise} \label{ExGrAlDirProd}%{\rm \cite{}}%{\bf ()}
{\rm\small 
If $G$ and $H$ are two groups then we have the following isomorphism of group algebras: $F[G\times H]\cong FG\otimes FH$. 
}
\end{Exercise}

\begin{Exercise} \label{TensProdMatrixAlg}%{\rm \cite{}}%{\bf ()}
{\rm\small 
$M_n(F)\otimes M_m(F)\cong M_{mn}(F)$.
}
\end{Exercise}

\begin{Exercise} \label{ExOuterTensProdAlg}%{\rm \cite{}}%{\bf ()}
{\rm\small {\bf (Outer tensor product of modules)}
Let $A$ and $B$ be $F$-algebras, $V$ be an $A$-module and $W$ be a $B$-module, then $V\otimes W$ has a structure of a module over the algebra $A\otimes B$ such that $(a\otimes b)(v\otimes w)=(av)\otimes(bw)$ for all $a\in A,b\in B,v\in V,w\in W$. This tensor product is sometimes referred to as the {\em outer tensor product} and denoted $V\boxtimes W$. 
}
\end{Exercise}

\begin{Exercise} \label{LHomOuterTens}%{\rm \cite{}}%{\bf ()}
{\rm\small
Let $A$ and $B$ be associative algebras over a field $F$, $V,V'$ be finite dimensional $A$-modules, and $W,W'$ be finite dimensional $B$-modules. Then 
$$
\Hom_{A\otimes B}(V\boxtimes W,V'\boxtimes W')\cong\Hom_A(V,V')\otimes\Hom_B(W,W'). 
$$

For the proof proceed in steps as follows: 

(i) Show that there is an embedding 
$$\Hom_A(V,V')\otimes\Hom_B(W,W')\into  \Hom_{A\otimes B}(V\boxtimes W,V'\boxtimes W')
$$
which maps pure tensor $f\otimes g$ on the left to the map $f\otimes g$

(ii) Note that every element $\phi$ of the tensor product in the left hand side can be written as a finite sum $\sum_i\al_i\otimes \be_i$ for some linear maps $\al_i:V\to V'$ and some {\em linearly independent} linear maps $\be_i:W\to W'$. Using linear independence of the $\be_i$, conclude that each $\al_i$ must belong to $\Hom_A(V,V')$. Now, we can rewrite our expression for $\phi$ as $\sum_j\ga_j\otimes\de_j$ where $\ga_j\in\Hom_A(V,V')$ are linearly independent and $\de_j:W\to W'$ are some linear maps. Now shows that all $\de_j$ are $B$-homomorphisms. 
}
\end{Exercise}

\begin{Exercise}  \label{TOuterTensAlg}%{\rm \cite{}}%{\bf ()}
{\rm\small
Let $A$ and $B$ be finite dimensional associative algebras over an algebraically closed field $F$.
\begin{enumerate}
\item[{\rm (i)}] If $V$ is an irreducible $A$-module and $W$ is an irreducible $B$-module then $V\boxtimes W$ is an irreducible $A\otimes B$-module.
\item[{\rm (ii)}] Every irreducible $A\otimes B$-module is of the form $V\boxtimes W$ for some irreducible $A$-module $V$ and some irreducible $B$-module $W$. 
\end{enumerate}

Prove (i) as follows: let $J(A)$ denote the Jacobson radical of $A$. The module $V\boxtimes W$ factors through to give a module over the quotient $A\otimes B/(A\otimes J(B)+J(A)\otimes B)$, which is semisimple. So it suffices to prove that $\End_{A\otimes B}(V\boxtimes W)=F$, which follows from Exercise~\ref{LHomOuterTens} and Schur's Lemma. 

Prove (ii) as follows: we may assume that $A$ and $B$ are semisimple. Now the result follows from (i) and Exercise~\ref{TensProdMatrixAlg} by counting irreducibles. 
}
\end{Exercise}

\begin{Exercise}  \label{LSymPolMurCen}%{\rm \cite{}}%{\bf ()}
{\rm\small 
For any symmetric polynomial $f$ in $n$ variables, the element $f(x_1,\dots,x_n)$ is central in the group algebra $ R_n$. 
 
To prove this, proceed in steps as follows:

(i) It suffices to check that $f(x_1,\dots,x_n)$ commutes with an arbitrary $s_a$. Write $f(x_1,\dots,x_n)$ as a linear combination of terms of the form $g(x_a,x_{a+1})M$, where $M$ is a monomial not involving $x_a$ and $x_{a+1}$ and $g$ is a symmetric polynomial in two variables. Check that $s_a$ commutes with $M$, so it suffices to check that $s_r$ commutes with $g(x_a,x_{a+1})$. 

(ii) Observe that  $g(x_a,x_{a+1})$ is a polynomial in $x_a+x_{a+1}$ and $x_ax_{a+1}$, and then check that $s_a$ commutes  with these.
}
\end{Exercise}

\begin{Exercise} %\label{}%{\rm \cite{}}%{\bf ()}
{\rm\small 
Let $G$ be a finite group, $X$ be a $G$-set, and $\chi$ be the character of the permutation module $\C X$.  

(a) $(\chi, 1)$ is the number of orbits of $G$ on $X$. In particular, if $G$ is transitive, $\C X$ can be decomposed as $\triv_G\oplus V$ where $V$ does not contain the trivial representation. %Let $\psi$ be the character of $V$. 

(b) If $Y$ is a $G$-set then $G$ acts on $X\times Y$ via $g(x,y)=(gx,gy)$ for $g\in G,x\in X,y\in Y$. Show that $\C[X\times Y]\cong \C X\otimes \C Y$. Deduce that the character of $\C[X\times X]$ is $\chi^2$.   

(c) {\bf\boldmath ($2$-transitivity Criterion)} The following are equivalent: 
\begin{enumerate}
\item[{\rm (i)}] $G$ is $2$-transitive on $X$;
\item[{\rm (ii)}] $G$ has exactly two orbits on $X\times X$;
\item[{\rm (iii)}] $(\chi^2,1)=2$;
\item[{\rm (iv)}] $V$ is irreducible. 
\end{enumerate}
}
\end{Exercise}
\iffalse{(This is the orbit counting lemma: the number of orbits of $G$ on $X$ is the average number of 
fixed points , that is the definition of the inner product on $C(G)$!).
}\fi


\begin{thebibliography}{Man;}

\bibitem[Cu]{C}
C.W. Curtis, {\em Pioneers of representation theory: Frobenius, Burnside, Schur, and Brauer,} American Mathematical Society, Providence, RI, 1999.

\bibitem[KR]{KR}
V.G Kac and A.K. Raina, {\em Highest Weight Representations of Infinite Dimensional Lie Algebras}, World Scientific, Singapore, 1987. 

\bibitem[Kl]{K}
A. Kleshchev, {\em Linear and Projective Representations of Symmetric Groups}, Cambridge University Press, Cambridge, 2005. 

\bibitem[Ma]{M}
I.G.\ Macdonald, {\em Symmetric functions and Hall polynomials}, Second edition, Oxford University Press, 1995.

\bibitem[OV]{OV}
A. Okounkov and A. Vershik, A new approach to representation theory of symmetric groups. {\em Selecta Math. (N.S.)} {\bf 2} (1996), 581--605. 




\end{thebibliography}
\end{document}